\documentclass[reqno,11pt]{amsart}  

\usepackage[foot]{amsaddr}

\usepackage{amsmath}
\usepackage{amssymb} 
\usepackage{mathtools} 
\usepackage{graphicx}
\usepackage{graphics}  
\usepackage{xcolor}
\usepackage{verbatim} 
\usepackage[normalem]{ulem} 
\usepackage{upgreek}
\usepackage{enumerate} 
\usepackage{cite}
\usepackage{amsmath}
\usepackage{amsfonts}
\usepackage{amssymb}
\usepackage{bbm}
\usepackage{cancel}
\usepackage{graphicx}
\usepackage{multicol}
\usepackage[utf8]{inputenc}
\usepackage{framed}
\usepackage{setspace}
\usepackage{amsthm}
\usepackage{hyperref}
\hypersetup{
  colorlinks   = true, 
  urlcolor     = blue, 
  linkcolor    = blue, 
  citecolor   = blue 
}
\usepackage{caption}
\usepackage{subcaption}
\usepackage{bbm}
\usepackage{graphicx}
\usepackage[titletoc]{appendix}
\numberwithin{equation}{section}  
 
\usepackage[refpage,noprefix]{nomencl}
\usepackage{nomencl} 
\usepackage{enumitem}

\newtheorem{theorem}{Theorem}[section] 
\newtheorem{lemma}[theorem]{Lemma} 
\newtheorem{proposition}[theorem]{Proposition} 
\newtheorem{cor}[theorem]{Corollary}

\theoremstyle{definition}

\theoremstyle{remark}
\newtheorem{remark}[theorem]{Remark} 

\usepackage{float}
\restylefloat{figure}

\DeclareMathAlphabet{\mathpzc}{OT1}{pzc}{m}{it}


\newcommand{\abs}[1]{\left| #1 \right|}

 
\renewcommand{\L} {\Lambda} %

\def\d{\delta} 
\newcommand{\e} {\varepsilon} 
\newcommand{\eps}{\varepsilon} 
\renewcommand{\epsilon}{\varepsilon}

\def\l{\lambda}

\renewcommand{\theta}{\vartheta}

\newfam\Bbbfam 
\font\tenBbb=msbm10 
\font\sevenBbb=msbm7 
\font\fiveBbb=msbm5 
\textfont\Bbbfam=\tenBbb 
\scriptfont\Bbbfam=\sevenBbb 
\scriptscriptfont\Bbbfam=\fiveBbb

\newcommand{\R}     {\mathbb{R}} 
\newcommand{\Z}     {\mathbb{Z}} 
\newcommand{\N}     {\mathbb{N}} 
\renewcommand{\P}   {\mathbb{P}} 
 
\newcommand{\E}     {\mathbb{E}}

\newcommand{\smfrac}[2]{{\textstyle{\frac {#1}{#2}}}}

\def\1{{\mathchoice {1\mskip-4mu\mathrm l}      
{1\mskip-4mu\mathrm l} 
{1\mskip-4.5mu\mathrm l} {1\mskip-5mu\mathrm l}}} 
\newcommand{\ssup}[1] {{\scriptscriptstyle{({#1}})}} 
\newcommand{\Ssup}[1]{^{\ssup{#1}}}
\newcommand{\sqsup}[1]{{\scriptscriptstyle{[{#1}}]}} 
\def\comment#1{} 
\newtheoremstyle{thm}{2ex}{2ex}{\itshape\rmfamily}{} 
{\bfseries\rmfamily}{}{1.7ex}{} 
\newtheoremstyle{rem}{1.3ex}{1.3ex}{\rmfamily}{} 
{\itshape\rmfamily}{}{1.5ex}{} 
 
 

\newcommand{\bc} {\boldsymbol{c}}


\newcommand{\Ocal}   {{\mathcal O }}




\newcommand{\ex}{{\rm e}} 
 
\renewcommand{\d}{{\rm d}} 
\newcommand{\per}{{\text{\rm per}}} 
\newcommand{\diff}{{\text{\rm diffusive}}}

\newcommand{\Sym}{\mathfrak{S}}

\newcommand{\dist}{{\operatorname {dist}}}

\newcommand{\Exp}{\mathscr{E}\kern-0.2mm{\operatorname{xp}}}
\newcommand{\Log}{\mathscr{L}\kern-0.2mm{\operatorname{og}}}

\newcommand\NoBlackBoxes{\global\overfullrule0pt}
\NoBlackBoxes

\setlength{\marginparwidth}{1.5cm}


\newcommand{\Sfrak}{{\mathfrak{S}}}

\newcommand\mycom[2]{\genfrac{}{}{0pt}{}{#1}{#2}}

\newcommand{\ek}[1]{\left[#1\right]}
\newcommand{\rk}[1]{\left(#1\right)}


\newcommand{\M}{{j}}
\newcommand{\partition}{m}
\newcommand{\bcd}{u} 
\newcommand{\free}{{\mathrm{free}}}
\newcommand{\dir}{{\mathrm{Dir}}}
\newcommand{\neu}{{\mathrm{Neu}}}
\renewcommand{\bc}{\mathrm{bc}}





\newcommand{\poi}[1]{\mathsf{Poi}_{#1}}


\newcommand{\Pfrak}{\mathfrak{P}}

\newcommand{\hk}[1]{^{(#1)}}

\newcommand{\PPP}{\mathsf{P}\!\mathsf{P}\!\mathsf{P}}

\newcommand{\gk}[1]{\left\{#1\right\}}

\newcommand{\al}{\abs{\L}}
\newcommand{\alN}{\abs{\L_N}}

\newcommand*{\rhoc}{\rho_{\rm c}}
\newcommand*{\rhoe}{\rho_{\rm e}}

\newcommand{\gammabc}[1]{\gamma_N^{\ssup{\L_N,{\mathrm{#1}}}}}
\renewcommand*{\PPP}{{\tt{P}}}
\usepackage{xifthen}

\newcommand{\optarg}[1][]{%
  \ifthenelse{\isempty{#1}}%
    {{\tt{P}}_\L}
    {{\tt{P}}_\L^{\ssup{{\rm #1}}}}
}

\newcommand*{\EPP}{{\tt{E}}}

\newcommand{\defeq}{\vcentcolon=}

 %

\newcommand{\q}{\mathrm{t}}
\newcommand{\qbc}{\q^{\ssup{\mathsf{bc}}}}

\newcommand{\Pfree}{\mathtt{P}^{\ssup{\free}}}

\newcommand{\gfree}{g^{\ssup{\free}}}

\newcommand{\densfree}{\rho^{\ssup{\free}}}

\newcommand{\Pbc}{\mathtt{P}^{\ssup{\bc,N}}}
\newcommand{\Ebc}{\mathtt{E}^{\ssup{\bc}}}
\newcommand{\gbc}{g^{\ssup{\mathsf{bc}}}}

\newcommand*{\pres}{p}
\newcommand{\presbc}{\pres^{\ssup{\bc}}}

\newcommand{\presbcN}{\pres^{\ssup{\bc,N}}} 
\newcommand{\densbcN}{\rho^{\ssup{\bc,N}}} 

\newcommand{\Nrm}{\mathrm{N}}
\newcommand{\Nop}{\mathrm{N}^{\ssup{\1}}}
\newcommand{\presfree}{\pres^{\ssup{\free}}}

\newcommand{\Partn}{\mathrm{N}}
\newcommand{\Ns}{\Nrm^{\ssup{\mathrm{short}}}}
\newcommand{\Nl}{\Nrm^{\ssup{\mathrm{long}}}}

\newcommand{\Nmin}{{T_N}}
\newcommand{\Nmax}{N^{+}}
 
\setlength{\textheight}{8.4in} 
\setlength{\textwidth}{6.6in} 
\setlength{\topmargin}{0in} 
\setlength{\headheight}{0.12in} 
\setlength{\headsep}{.40in} 
\setlength{\parindent}{1pc} 
\setlength{\oddsidemargin}{-0.1in} 
\setlength{\evensidemargin}{-0.1in} 
 
\marginparwidth 40pt 
\marginparsep 0pt 
\oddsidemargin-5mm 
\topmargin -30pt 
\headheight 12pt 
\headsep 15pt 
\footskip 15pt 
\textheight 670pt 
\textwidth 170mm 
\columnsep 10pt 
\columnseprule 0pt 
 \sloppy 
 \parskip 0.8ex plus0.3ex minus0.2ex 
 \parindent1.0em 

\definecolor{amethyst}{rgb}{0.6, 0.4, 0.8}

\title[ODLRO for the free Bose gas via Feynman--Kac formula]
{Off-diagonal long-range order\\ \medskip for the free Bose gas\\\medskip via the Feynman--Kac formula} 

\author{Wolfgang K\"onig$^1$}
\address{$^1$Weierstrass Institute Berlin, {\tt koenig@wias-berlin.de}}
\author{Quirin Vogel$^2$}
\address{$^2$Department of Mathematics, TU München, {\tt quirin.vogel@tum.de}}
\author{Alexander Zass$^3$}
\address{$^3$Weierstrass Institute Berlin, {\tt zass@wias-berlin.de}}

\usepackage[hang,flushmargin]{footmisc}

\providecommand{\noopsort}[1]{}
\mathtoolsset{showonlyrefs}

\begin{document}

\begin{abstract}
We consider the path-integral representation of the ideal Bose gas under various boundary conditions. We show that Bose--Einstein condensation occurs at the famous critical density threshold, by proving that its $1$-particle-reduced density matrix exhibits off-diagonal long-range order above that threshold, but not below. Our proofs are based on the well-known Feynman--Kac formula and a representation in  terms of a crucial Poisson point process. Furthermore, in the condensation regime, we derive a law of large numbers with strong concentration for the number of particles in short loops. In contrast to the situation for free boundary conditions, where the entire condensate sits in just one loop, for all other boundary conditions we obtain the limiting Poisson--Dirichlet distribution for the collection of the lengths of all long loops. 

Our proofs are new and purely probabilistic (apart from a standard eigenvalue expansion), using elementary tools like Markov's inequality, Poisson point processes, combinatorial formulas for cardinalities of particular partition sets, and asymptotics for random walks with Pareto-distributed steps.

\bigskip\noindent
\emph{MSC2020:} 60K35, 82B10\\
\emph{\keywordsname:} Interacting many-particle systems, Bose--Einstein condensation, Brownian bridge, Feynman--Kac formula, long-range order, Poisson--Dirichlet distribution, partitions, random walks with heavy-tailed steps.
\end{abstract}
\maketitle

\section{Introduction and main results}\label{sec:intro}

\noindent In this paper, we consider a prominent and much-studied object, namely the free (i.e., ideal, non-interacting) Bose gas in a large box in the thermodynamic limit, and its famous Bose--Einstein condensation (BEC) phase transition. While in the physics literature the occurrence of this phase transition is considered a fact long-established, in the mathematics literature this appears to us much less clear. Furthermore, the well-known interpretation of BEC in terms of Brownian cycles (also called loops) has been stated a lot, but (as far as we are aware of) seems to have never been rigorously proved, at least not for all boundary conditions. 

The main purpose of this paper is to close this gap and provide proofs for the occurrence of the BEC phase transition using the well-known Feynman--Kac formula, and to draw rigorous consequences about the relation between the long Brownian cycles and the condensate. In doing so, we will also demonstrate the strength of using probabilistic methods -- like combinatorics, Poisson point processes and large deviations for random walks -- to study the free Bose gas.

The most popular definitions of the occurrence of BEC~\cite{PO56} are in terms of a macroscopic occupation of the zero Fourier mode and in terms of off-diagonal long-range order (ODLRO). The latter is defined as the existence of an eigenvalue of order $N$ for the $1$-particle-reduced density operator (1-prdm), and it is this definition that we will be relying on in this paper. It is well-known (e.g.,~\cite{Fey53,Gin70}) that the Feynman--Kac formula allows for a reformulation in terms of an ensemble of Brownian loops, each carrying a random number of particles (sometimes called \lq winding number\rq,~\cite{Uel06}). 
This is suggestive of a connection between BEC and the appearance of a macroscopic part in the system in long loops,~\cite{BDZ08}. Feynman himself suggested the portion of particles in long loops as a new order parameter, but warned that their physical relevance must be checked on a case-by-case basis. Hence, we consider it a useful goal to rigorously prove this relevance by deriving ODLRO using the Feynman--Kac formula, thereby rigorously showing that the mass of the condensate coincides with the amount of particles in the long cycles.

While most of the mathematical literature (see Section \ref{sec:lit} for a brief overview) focuses on the case of periodic boundary conditions, presumably because it gives a mathematical simplification, we wish to stress the relevance of considering different ones (e.g., zero, Neumann or mixed boundary conditions): not only because they may be more akin to the conditions used for experiments but also because, in the thermodynamic limit, they seem to influence statistical properties of the Bose gas that are relevant for BEC,~\cite{HKKS01,HKS02}. We will be considering both diffusive (e.g., zero Dirichlet and Neumann) and periodic boundary conditions, and comparing to the analogous formulas for the so-called free boundary conditions, where the influence of the boundary is ignored (note, however, that this does not come from a quantum mechanical model).

The organisation of the remainder of this section is as follows. In Section~\ref{sec:freeBosegas}, we introduce the free Bose gas. In Section~\ref{sec:ODLRO}, we present the main quantity of interest, the $1$-particle-reduced density matrix, and formulate our main result, Theorem \ref{thm:ODLRO}, on the occurrence of off-diagonal long-range order for large particle densities (the {\em supercritical case}) in dimensions $\geq 3$, and the non-occurrence for low densities (the {\em subcritical case}).
As a preparation for the proof, in Section \ref{sec:FKform}, we present the path-integral representation of the free Bose gas, and use it to rewrite the $1$-particle-reduced density matrix. In Section \ref{sec-PPP}, we introduce a fundamental and crucial Poisson point process of loop lengths, and present fine estimates for these lengths (Proposition~\ref{prop:PoissonDirichlet}). Some crucial assertions about the number of particles in short, respectively in long, loops for diffusive and periodic boundary conditions are formulated in Section~\ref{sec-loops}, in particular the distributional convergence of the collection of loop lengths towards the Poisson--Dirichlet distribution.  Finally, in Section~\ref{sec-freeenergy}, we compute explicit formulas for the limiting free energy. A literature survey is found in Section~\ref{sec:lit}.

Section~\ref{sec:proofODLRO} is dedicated to the proof of the occurrence of ODLRO in the supercritical regime, for periodic and diffusive boundary conditions, Section~\ref{sec:ODLRObc}, and for the free case, Section~\ref{sec:ODLROfree}. In Section~\ref{sec:nonODLRO}, we show that ODLRO does not occur in the subcritical case. In Section~\ref{sec-prooflongloops}, we prove the estimates on the distribution of the long loops. Remaining statements are proved in Appendix~\ref{sec:remaining}.

\subsection{The free Bose gas}\label{sec:freeBosegas}

For a fixed centred box $\L$ in $\R^d$, we consider an ideal bosonic system at positive temperature $1/\beta\in(0,+\infty)$ (working with the Boltzmann constant $k_{\rm B}=1$). 

More precisely, we introduce the $N$-particle Hamilton operator $\mathcal H_N^{\ssup{\L,{\bc}}}=-\frac 12 \sum_{i=1}^N\Delta_i^{\ssup{\L,{\bc}}}$ acting on the Hilbert space $L^2(\L^N)$ of square-integrable complex functions on $\L^N$, with kinetic energy given by the Laplace operator $\Delta^{\ssup{\L,{\bc}}}$ in $\L^N$ with a given boundary condition ${\bc}$. More precisely, we  consider the diffusion equation 
\begin{equation}
    \frac{\partial \psi}{\partial t}(x,t) = -\frac12\Delta\psi(x,t)\, ,\qquad t>0,x\in\L\, ,
\end{equation}
where either $\L$ is a torus (\emph{periodic} boundary conditions), or we impose what we call \emph{diffusive} boundary conditions, i.e.,
\begin{equation}
   \frac{\partial\psi}{\partial n}(x,t) = \bcd(x/L)\psi(x,t)\, \qquad t>0,x \in\partial\L,\mbox{ where }\L=L [-\smfrac 12,\smfrac 12]^d\, ,
\end{equation}
for a given function $\bcd\colon [-\frac 12,\frac 12]^d\setminus\rk{-\frac 12,\frac 12}^d \to[0,\infty]$, which is either continuously differentiable or $\equiv+\infty$ on the boundary of the centred unit box (in the latter case, this is to be interpreted as $\psi(x,t)=0$), see, e.g.,~\cite[page 373]{a1981operator}. We talk about \emph{Dirichlet} boundary conditions for $\bcd=+\infty$ and \emph{Neumann} boundary conditions for $\bcd=0$. Intermediary values of $\bcd$ correspond to semi-reflecting, semi-absorbing boundary conditions.

In this work, for the purpose of having a uniform presentation, fixing a boundary condition `bc' will mean that we choose a function $\bcd$ or study the problem on the torus. We write $\bc\in\{\diff,\per\}$.

The partition function of the system is given by the symmetrised trace
\begin{equation}
    Z_N^{\ssup{\L,{\bc}}}(\beta)={\rm Tr}\Big(\Pi_+\circ{\rm e}^{-\beta \mathcal H_N^{\ssup{\L,{\bc}}}}\Big),\qquad N\in\N, \beta\in(0\,\infty)\, ,
\end{equation}
see \cite[Eq. 2.21]{Gin70}, where $\Pi_+\colon L^2(\L^N)\to L^2_{\rm{sym}}(\L^N)$ is the symmetrisation operator $\Pi_+ f(x_1,\dots,x_N)=\frac 1{N!}\sum_{\sigma\in\Sym_N}f(x_{\sigma_1},\dots,x_{\sigma_N})$, $\Sym_N$ denotes the set of all  permutations of $1,\dots,N$, and $\circ$ is the composition of operators.

We study this trace in the thermodynamic limit, and will therefore choose $\L$ to be the centred box $\L_N$ of volume $N/\rho$, where $\rho\in(0,\infty)$ is the {\em particle density}. Since $\beta>0$ remains fixed throughout this paper, we omit this dependency from the notations. It is known that the {\em free energy per volume},
\begin{equation}\label{freeenergy}
    {\rm f}(\rho)=-\frac 1\beta \lim_{N\to\infty}\frac 1{|\L_N|}\log  Z_N^{\ssup{\L_N,{\bc}}}(\beta)\, ,
\end{equation}
exists and does not depend on the boundary condition. In fact, it exhibits a phase transition in dimensions $d\geq3$ at the critical particle density threshold
\begin{equation}\label{Rhoc}
\rho_{\rm c}=\sum_{k\in\N}(2\pi k\beta)^{-d/2}=(2\pi\beta)^{-d/2}\zeta(d/2)\begin{cases}
=\infty&\mbox{in }d\leq 2,\\
<\infty&\mbox{in }d\geq 3\, ,
\end{cases} 
\end{equation}
where $\zeta(a)=\sum_{k\in\N}k^{-a}$ denotes the Riemann zeta function. Indeed, ${\rm f}$ is analytic in $(0,\rho_{\rm c})$ and constant in $[\rho_{\rm c},\infty)$; see Section~\ref{sec-freeenergy} for more details. Clearly, $\rhoc$ does not depend on the boundary conditions.

\subsection{Main results: ODLRO and limiting distribution of loop lengths}\label{sec:ODLRO}

We would like to identify the phase transition seen in the non-analyticity of the free energy as the famous {\em Bose--Einstein condensation} phase transition. For this, we need to introduce the {\em $1$-particle-reduced density matrix}, i.e., the kernel $\gamma_N^{\ssup{\L,{\bc}}}\colon \L\times\L\to[0,\infty)$ of the following partial trace over $N-1$ variables:
\begin{equation}
   \Gamma_N^{\ssup{\L,{\bc}}}= \frac N{Z_N^{\ssup{\L,{\bc}}}} {\rm Tr}_{N-1}\Big( \Pi_+\circ{\rm e}^{-\beta \mathcal H_N^{\ssup{\L,{\bc}}}}\Big)\, .
\end{equation}
The operator $\Gamma_N^{\ssup{\L,{\bc}}}$ acts on $L^2(\L)$. Its principal (i.e., largest) eigenvalue is given by
\begin{equation}
    \sigma_N^{\ssup{\L,{\bc}}}=\sup_{f\in L^2(\L)\colon \|f\|_{L^2(\L)}=1}\langle f,\Gamma_{N}^{\ssup{\L,{\rm bc}}} (f)\rangle.
\end{equation}
We say that the system exhibits {\em off-diagonal long-range  order} if $\sigma_N^{\ssup{\L_N,{\bc}}}\asymp N$ in the thermodynamic limit at some density $\rho$.
The BEC phase transition at the critical density $\rhoc$ is then mathematically defined as the occurrence of ODLRO for $\rho>\rhoc$, but not for $\rho<\rho_{\rm c}$.

The main purpose of this paper is to use the Feynman--Kac formula to prove that this property holds for the above-mentioned boundary conditions. The novelty is twofold: firstly, we prove ODLRO not only for periodic, but also for \textit{all} boundary conditions (including Dirichlet and Neumann); this was -- to the best of our knowledge -- missing in the literature. Secondly, our proof is intrinsically probabilistic, as it uses the Feynman--Kac formula and works in the setting of an ensemble of many Brownian loops with long and short lengths. Indeed, we also prove ODLRO for the mathematical model of free boundary conditions. See also the discussion in Section \ref{sec:lit}.
 
We denote by $U=[-\frac 12,\frac 12]^d$ the closed centred unit box, and by $\phi_1^{\ssup{\bc}}\colon U\to[0,\infty)$ the $L^2(U)$-normalised eigenfunction corresponding to the smallest eigenvalue of $-\frac12 \Delta^{\ssup{U,{\bc}}}$. For example, $\phi_1^{\ssup{\free}}(x)=\phi_1^{\ssup{\per}}(x)=\phi_1^{\ssup{\neu}}(x)=1$ and $\phi_1^{\ssup{\dir}}(x)= 2^{d/2}\prod_{i=1}^d\cos(\pi x_i)$, for $x=(x_1,\dots,x_d)$. We can now state our main result. 

\begin{theorem}[ODLRO for the free Bose gas with boundary conditions]\label{thm:ODLRO} 
Fix $d\in\N$, let $\L_N$ be the centred box $L_N  U$  with volume $L_N^d=N/\rho$ where $U=[-\frac 12,\frac 12]^d$. Fix any periodic or diffusive boundary condition $\bc$, and $\beta,\rho\in(0,\infty)$. Then the following hold:

\begin{enumerate}
\item[(i)] {\em (Supercritical regime: $\rho>\rhoc$.)} Assume that $d\geq 3 $ and $\rho>\rhoc$, then the kernel satisfies, uniformly in $x,y \in L_N$, as $N\to\infty$,
\begin{equation}\label{eq:ODLRO_sup}
   \gammabc{\bc}(x,y)= \rk{\rho-\rhoc+o(1)} \phi_1^\ssup{\bc}(\smfrac x{L_N})\phi_1^\ssup{\bc}(\smfrac y{L_N})+ \psi(\abs{x-y})+o(1),
\end{equation}
with some function $\psi\colon(0,\infty)\to(0,\infty)$ satisfying $\psi(r)\leq C r^{2-d}$ as $r\to\infty$ for some $C>0$. As a consequence,
\begin{equation}
\sigma_N^{\ssup{\L_N,{\bc}}}\sim \rk{\rho-\rhoc}\abs{\L_N},\qquad N\to\infty.
\end{equation}

\item[(ii)] {\em (Subcritical regime: $\rho\leq \rhoc$.)} For some $c>0$ and all $x,y\in \L_N$, as $N\to\infty$,
\begin{equation}\label{eq:ODLRO_sub}
    \gammabc{\bc}(x,y)=\Ocal\rk{\ex^{-c\abs{x-y}}}\, .
\end{equation}
As a consequence,
\begin{equation}
\sigma_N^{\ssup{\L_N,{\bc}}}\leq  \Ocal(1).
\end{equation}
\end{enumerate}
\end{theorem}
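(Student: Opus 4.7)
The starting point is the loop/Feynman--Kac representation of the 1-prdm developed in Section~\ref{sec:FKform}, which yields an identity of the schematic form
\begin{equation}
\gammabc{\bc}(x,y) = \frac{1}{Z_N^{\ssup{\L_N,{\bc}}}} \sum_{k=1}^N p_{k\beta}^{\ssup{\L_N,{\bc}}}(x,y) \; Z_{N-k}^{\ssup{\L_N,{\bc}}},
\end{equation}
where $p_{k\beta}^{\ssup{\L_N,{\bc}}}$ is the heat kernel on $\L_N$ with boundary condition $\bc$. The factor $p_{k\beta}^{\ssup{\L_N,{\bc}}}(x,y)$ represents a single Brownian cycle of length $k$ containing the two marked endpoints $x,y$, while $Z_{N-k}^{\ssup{\L_N,{\bc}}}$ is the partition function of the remaining $N-k$ particles. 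Both parts of Theorem~\ref{thm:ODLRO} then follow by splitting the sum according to short versus long marked loop and analysing each regime through the eigenfunction expansion of $p_{k\beta}^{\ssup{\L_N,{\bc}}}$ combined with the loop-length asymptotics of Sections~\ref{sec-PPP}--\ref{sec-loops}.

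\textbf{Supercritical regime.} Here I would substitute the spectral decomposition
\begin{equation}
p_{k\beta}^{\ssup{\L_N,{\bc}}}(x,y) = \tfrac{e^{-k\beta \lambda_1^{\ssup{\L_N,{\bc}}}}}{|\L_N|}\phi_1^\ssup{\bc}(\smfrac{x}{L_N})\phi_1^\ssup{\bc}(\smfrac{y}{L_N}) + r_k^{\ssup{\L_N,{\bc}}}(x,y),
\end{equation}
where $\lambda_1^{\ssup{\L_N,{\bc}}} = L_N^{-2}\lambda_1^{\ssup{U,{\bc}}}$ and $r_k$ collects the higher modes. Summing $r_k$ against $Z_{N-k}/Z_N$ produces the short-loop/whole-space contribution: for $k$ not too large relative to $L_N^2$, one has $r_k(x,y)\approx p_{k\beta}^{\R^d}(x,y) - |\L_N|^{-1}$, and the series $\sum_{k\geq 1}p_{k\beta}^{\R^d}(x,y)$ is the Green function of Brownian motion on $\R^d$, which in $d\geq 3$ decays precisely like $\abs{x-y}^{2-d}$; this defines the function $\psi$ with the stated decay. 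The principal-eigenvalue part yields the contribution
\begin{equation}
\tfrac{1}{|\L_N|}\phi_1^\ssup{\bc}(\smfrac{x}{L_N})\phi_1^\ssup{\bc}(\smfrac{y}{L_N}) \sum_{k=1}^N e^{-k\beta\lambda_1^{\ssup{\L_N,{\bc}}}} \frac{Z_{N-k}^{\ssup{\L_N,{\bc}}}}{Z_N^{\ssup{\L_N,{\bc}}}},
\end{equation}
and the decisive step is to identify the remaining sum as $(\rho-\rhoc)\abs{\L_N}(1+o(1))$. By the Poisson point process representation of Section~\ref{sec-PPP}, this sum equals precisely the expected number of particles carried by loops of macroscopic length, and the law-of-large-numbers statement for this quantity in Section~\ref{sec-loops} (supported by the tail estimates of Proposition~\ref{prop:PoissonDirichlet}) converges to $(\rho-\rhoc)\abs{\L_N}$. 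The eigenvalue asymptotic $\sigma_N^{\ssup{\L_N,{\bc}}} \sim (\rho-\rhoc)\abs{\L_N}$ then follows by testing against the normalised principal eigenfunction $\abs{\L_N}^{-1/2}\phi_1^\ssup{\bc}(\cdot/L_N)$, which almost saturates the Rayleigh quotient.

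\textbf{Subcritical regime.} Below criticality, Section~\ref{sec-freeenergy} provides a well-defined activity $z \in (0,1]$ solving $\sum_{k\ge 1} z^k(2\pi k\beta)^{-d/2}=\rho$, with $z<1$ for $\rho<\rhoc$, and the PPP description of Section~\ref{sec-PPP} gives $Z_{N-k}^{\ssup{\L_N,{\bc}}}/Z_N^{\ssup{\L_N,{\bc}}}=z^k(1+o(1))$ uniformly in $k$ up to any polynomial in $N$. Combined with the Gaussian heat-kernel bound $p_{k\beta}^{\ssup{\L_N,{\bc}}}(x,y)\le C k^{-d/2}e^{-\abs{x-y}^2/(2k\beta)}$ (valid for all the relevant boundary conditions, either directly or after reflection/periodisation), one obtains
\begin{equation}
\gammabc{\bc}(x,y) \le C\sum_{k\ge 1} z^k k^{-d/2} e^{-\abs{x-y}^2/(2k\beta)},
\end{equation}
which decays as $e^{-c\abs{x-y}}$ with $c=\sqrt{2\beta\abs{\log z}}$ by a saddle-point estimate at $k \asymp \abs{x-y}$. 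The bound $\sigma_N^{\ssup{\L_N,{\bc}}}=\Ocal(1)$ then follows from Schur's test applied to this exponentially decaying kernel.

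\textbf{Main obstacle.} The subtle step is the quantitative identification, in the supercritical regime, of the coefficient $\sum_{k} e^{-k\beta\lambda_1}Z_{N-k}/Z_N$ with the condensate mass $(\rho-\rhoc)\abs{\L_N}$, uniformly in $x,y\in\L_N$ and with an error compatible with the $o(1)$ in~\eqref{eq:ODLRO_sup}. This amounts to interchanging the limit $N\to\infty$ with the summation in a way that respects the Poisson--Dirichlet fluctuations of the largest cycles, and is precisely where the fine PPP estimates of Proposition~\ref{prop:PoissonDirichlet} and the short-loop concentration results of Section~\ref{sec-loops} must be invoked at full strength.
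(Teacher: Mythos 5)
Your proposal follows the paper's overall strategy: the Feynman--Kac/Poisson-point-process representation of $\gamma_N$ from Corollary~\ref{cor-FKPPPrepr}, the eigenvalue expansion of the heat kernel from Lemma~\ref{CorollaryQkSupercritical}, the short/long loop split, and in the subcritical case the chemical-potential tilt combined with a local CLT. The one place where you take a genuinely different route is the evaluation of the coefficient $\sum_r e^{-\lambda_1\beta r L_N^{-2}}Z_{N-r}/Z_N$. The paper (Sections~\ref{sec:ODLRObc} and~\ref{sec-proofProp2.1}) expresses $\Partn_\L=\Ns_\L+\Nl_\L$ as an independent convolution and invokes Proposition~\ref{prop:MedLongLoops} to produce the cancellation $\PPP(\Nl_\L=N-k-r)\sim e^{\lambda_1\beta rL_N^{-2}}\PPP(\Nl_\L=N-k)$, after which the number of contributing $r$'s is simply $(\rho-\rhoc)|\L_N|$. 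You instead observe that the Mecke identity gives $\E[rX_r\mid\Partn_\L=N]=\q_r\,Z_{N-r}/Z_N$ (so $\sum_r\q_r Z_{N-r}/Z_N=N$ exactly, as $\tr\Gamma_N=N$), that $\q_r\sim e^{-\lambda_1\beta rL_N^{-2}}$ for $r>T_N$, and hence the coefficient is $\E[\Nl_\L\mid\Partn_\L=N]$ up to $o(|\L_N|)$; then you evaluate this conditional mean by the concentration of $\Ns_\L$ from Proposition~\ref{prop-shortloopsasy}. This is a conceptually cleaner packaging — the rank-one coefficient \emph{is} (asymptotically) the conditional mean particle number in long loops — but it saves no work: bounding $\sum_{r\le T_N}e^{-\lambda_1\beta rL_N^{-2}}Z_{N-r}/Z_N$ and controlling the $\q_r$-approximation still require the ratio estimates underlying Proposition~\ref{prop:MedLongLoops}. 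Two overstatements worth flagging, neither fatal: the identification with $\E[\Nl_\L\mid\Partn_\L=N]$ is only up to $o(|\L_N|)$, not ``precisely''; and in the subcritical argument the claim $Z_{N-k}/Z_N=z^k(1+o(1))$ uniformly up to polynomial $k$ is stronger than the local CLT (Lemma~\ref{LemmaCLT}) delivers and stronger than needed — a two-sided $O(1)$ comparison for $k\lesssim|\L_N|^{1/2}$ and a crude one-sided bound beyond suffice. Likewise the pointwise Gaussian bound $g_{k\beta}^{\ssup{\L,\bc}}(x,y)\le Ck^{-d/2}e^{-|x-y|^2/(2k\beta)}$ fails for periodic or Neumann boundary conditions once $k\gg L_N^2$; the paper drops the $k^{-d/2}$ prefactor there (Lemma~\ref{CorollaryQkSupercritical}(ii)) and treats $k\ge|\L_N|^{1/2}$ separately.
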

Note that the same results hold true for free (open) boundary conditions (see Proposition \ref{prop:ODLRO_free}), even though it is not a quantum mechanical model; see the definition of $\gammabc{\free}$ in \eqref{eq:rdm_bc} below for $\bc=\free$. 

The proof of the theorem is in Section \ref{sec:proofODLRO} for the supercritical regime, and in Section \ref{sec:nonODLRO} for the subcritical one.
The main message of Theorem~\ref{thm:ODLRO} is that the 1-prdm is asymptotically equal to the `overshoot', the mass of the condensate, times the tensor product of the eigenfunction associated to the smallest eigenvalue, up to an additive error that is small away from the diagonal.

\subsection{Expansion in terms of the Feynman--Kac formula}\label{sec:FKform}

It has been known for more than 50 years that the trace of the density matrix admits a path-integral representation in terms of Brownian bridges (see, e.g.,~\cite{Gin70}). This representation is sometimes called the \emph{Feynman--Kac formula}; it appears in \eqref{FKform} below. The following lemma is the starting point of our analysis, and we consider the objects that it introduces as important mathematical quantities that are worth being studied in their own right; see our additional results on the behaviour of the loop lengths in Proposition \ref{prop:PoissonDirichlet} below. 
Sh
Fix a centred box $\L\subset\R^d$. We denote by $g_{r\beta}^{\ssup{\L,{\bc}}}(x,y)$ the kernel of the fundamental solution to the Cauchy problem for the heat equation $\frac12 \Delta^{\ssup{\L,{\bc}}} g=0$ in $\L$ with boundary condition ${\bc}$. By
\begin{equation}\label{bkdef}
    \q_{k}^{\ssup{\L,{\bc}}}=\int_\L g_{k\beta}^{\ssup{\L,{\bc}}}(x,x)\,\d x,\qquad k\in\N\, ,
\end{equation}
we denote the trace of the convolution operator with kernel $g_{k\beta}^{\ssup{\L,{\bc}}}$. We write
\begin{equation}\label{partitions}
    \Pfrak_N=\Big\{\partition=(\partition_k)_{k\in\N}\in\N_0^\N\colon \sum_{k\in\N}k\partition_k=N\Big\}
\end{equation}
for the set of partitions of $N$.

We are now ready to present the key representations of the partition function and $1$-prdm. The proof is in Appendix \ref{sec:remaining}.
\begin{lemma}[FK-representations]\label{lem:FKrepgamma} 
Fix a box $\L\subset\R^d$ and any periodic or diffusive boundary condition $\bc$. Then, for any $N\in\N$ and $\beta\in(0,\infty)$,
\begin{equation}\label{eq:FK_Z}
    Z_N^{\ssup{\L,{\bc}}}=\sum_{\partition\in\Pfrak_N}\prod_{k\in\N}
    \frac{(\q_k^{\ssup{\L,{\bc}}})^{\partition_k}}{k^{\partition_k}\partition_k!}\, ,
\end{equation}
and
\begin{equation}\label{eq:firstformula}
    \gamma_N^{\ssup{\L,{\bc}}}(x,y)=\sum_{r=1}^{N}g^{\ssup{\L,{\bc}}}_{r\beta}(x,y)\frac{Z^{\ssup{\L,{\bc}}}_{N-r}}{Z^{\ssup{\L,{\bc}}}_N},\qquad x,y\in\L\, .
\end{equation}
\end{lemma}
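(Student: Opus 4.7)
The strategy is classical: apply the Feynman--Kac formula to represent the kernel of $\ex^{-\beta \Hcal_N^{\ssup{\L,\bc}}}$ as a product of heat kernels, decompose the symmetrisation sum according to cycle type, and collapse each cycle by the Chapman--Kolmogorov property of $g_{t}^{\ssup{\L,\bc}}$. For both periodic and diffusive boundary conditions the kernel $g_t^{\ssup{\L,\bc}}$ is a jointly continuous, symmetric, strictly positive fundamental solution with the semigroup identity $\int_\L g_s^{\ssup{\L,\bc}}(x,z)g_t^{\ssup{\L,\bc}}(z,y)\,\d z=g_{s+t}^{\ssup{\L,\bc}}(x,y)$, and $\q_k^{\ssup{\L,\bc}}<\infty$ by boundedness of $\L$; these standard facts, applied without further comment, make the formal manipulations below rigorous.

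\textbf{Step 1: Partition function.} Writing the kernel of $\Pi_+\circ\ex^{-\beta\Hcal_N^{\ssup{\L,\bc}}}$ as $\frac{1}{N!}\sum_{\sigma\in\Sym_N}\prod_{i=1}^N g_\beta^{\ssup{\L,\bc}}(x_i,y_{\sigma(i)})$, setting $\vec{y}=\vec{x}$ and integrating over $\L^N$, each $\sigma$ contributes $\prod_{c\text{ cycle of }\sigma}\q_{|c|}^{\ssup{\L,\bc}}$: indeed, for a cycle $(i_1,\dots,i_k)$, iterating Chapman--Kolmogorov $k-1$ times followed by the diagonal trace collapses the factor $g_\beta(x_{i_1},x_{i_2})\cdots g_\beta(x_{i_k},x_{i_1})$ to $\int_\L g_{k\beta}^{\ssup{\L,\bc}}(x,x)\,\d x=\q_k^{\ssup{\L,\bc}}$. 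Grouping permutations by cycle type $\partition\in\Pfrak_N$ and using the classical count $N!/\prod_k k^{\partition_k}\partition_k!$ for the number of permutations with type $\partition$, the factors of $N!$ cancel and \eqref{eq:FK_Z} follows.

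\textbf{Step 2: 1-prdm.} For $\gamma_N^{\ssup{\L,\bc}}(x,y)$, I perform the partial trace by setting $x_1=x$, $y_1=y$, and $y_i=x_i$ for $i\ge 2$, and integrating over $x_2,\dots,x_N$. For a fixed $\sigma$, the key observation is that the integrand decouples along the cycle structure of $\sigma$: if the cycle containing the index $1$ has length $r$, then collapsing its $r-1$ internal integrations yields precisely $g_{r\beta}^{\ssup{\L,\bc}}(x,y)$ (because the endpoints $x$ and $y$ remain unidentified), while every other cycle of length $k$ contributes $\q_k^{\ssup{\L,\bc}}$ exactly as in Step~1.

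\textbf{Step 3: Combinatorial assembly.} Fixing $r\in\{1,\dots,N\}$, the number of permutations of $\{1,\dots,N\}$ whose cycle through $1$ has length $r$ and whose remaining $N-r$ elements form a permutation of prescribed cycle type $\partition'\in\Pfrak_{N-r}$ equals $\binom{N-1}{r-1}(r-1)!\cdot\frac{(N-r)!}{\prod_k k^{\partition'_k}\partition'_k!}=\frac{(N-1)!}{\prod_k k^{\partition'_k}\partition'_k!}$. Summing over $\partition'\in\Pfrak_{N-r}$ with the weight $\prod_k (\q_k^{\ssup{\L,\bc}})^{\partition'_k}$ and invoking \eqref{eq:FK_Z} produces the factor $(N-1)!\,Z_{N-r}^{\ssup{\L,\bc}}$; multiplying by the prefactor $N/(Z_N^{\ssup{\L,\bc}}N!)$ yields exactly \eqref{eq:firstformula}.

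I anticipate no serious obstacle: the analytic step (legitimacy of Chapman--Kolmogorov) is standard for all the boundary conditions considered, and the remainder is purely combinatorial bookkeeping tracking cycle counts and cancellations of factorials.
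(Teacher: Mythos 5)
Your proof is correct and follows essentially the same route as the paper's: decompose the symmetrised kernel by cycle type, collapse each cycle via the Chapman--Kolmogorov property, isolate the cycle through the distinguished index in the partial trace to produce $g_{r\beta}^{\ssup{\L,\bc}}(x,y)$, and count permutations to recover $Z_{N-r}$. The only cosmetic difference is that you carry out the partial trace directly on the kernel $\frac{1}{N!}\sum_\sigma\prod_i g_\beta(x_i,y_{\sigma(i)})$ and make the cycle-counting $\binom{N-1}{r-1}(r-1)!=\frac{(N-1)!}{(N-r)!}$ explicit, whereas the paper phrases the same steps in terms of Brownian bridge measures $\mu_{x,y}^{\ssup{\beta}}$ and cites the partial-trace formula from the literature before decomposing; the combinatorial content is identical.
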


\subsection{A crucial Poisson point process}\label{sec-PPP}

In \cite{ACK11}, the observation was made that the ensemble of Brownian loops in the Feynman--Kac formula can be conceived as a Poisson soup of sites, appended with a mark that is the Brownian loop starting and ending at this site. As our second main reformulation step, we will rewrite the quotient of partition functions in \eqref{eq:firstformula} in terms of probabilities with respect to such a Poisson point process (PPP). Since we are not interested here in the entire loops, but only in their lengths, the mark of a Poisson point will be just the length of the appended loop. This leads to a reduced version of that PPP, which we might see as a kind of {\em grand-canonical point process} on $\N$.

Fix a centred box $\L\subset \R^d$, boundary condition ${\bc}$, and $N\in\N$. Let $\PPP^{\ssup{{\bc,N}}}_{\L}$ be the distribution of a Poisson point process $(\omega_i)_i$ on $\N$ with intensity measure given by
\begin{equation}\label{intensitymeasure}
    \nu^\ssup{{\bc,N}}_{\L} = \sum_{k=1}^{N} \frac{1}{k} \q_{k}^{\ssup{\L,{\bc}}}\delta_k\, .
\end{equation} 
The total mass of $\nu_{\L}^{\ssup{\bc,N}}$ is equal to $|\L| \presbcN_\L$, where
\begin{equation*}
  \presbcN_\L = \frac{1}{\al}\sum_{k= 1}^{N}\frac 
  1 k\, \q_{k}^{\ssup{\L,{\bc}}}
\end{equation*}
is the pressure in $\L$.

The points $\omega$ of the PPP sitting at $k\in\N$ are called the \emph{loops of length $k$}. The number $X_k$ of such points is a Poisson-distributed random variable with parameter $\frac{1}{k} \q_{k}^{\ssup{\L,{\bc}}}$, and the family $(X_k)_{k\in\N}$ is independent. The number $\Nop_\L(\eta):=\sum_{k=1}^N X_k$ of Poisson points in $\L$ is then Poisson-distributed with parameter $\presbcN_\L|\L|$. Since $X_k$ counts the number of loops of length $k$, we say that the total number of \emph{particles} of $\eta$ that belong to loops starting in $\L$ is given by $\Partn_{\L}(\eta) := \sum_{k=1}^N k X_k$. 
We denote by $\poi{\lambda}$ the Poisson distribution with parameter $\lambda>0$.

The following expresses the partition function of the bosonic system in terms of the above PPP.
\begin{lemma}\label{lem-PoissonRepr}
For any centred box $\L\subset\R^d$, $N\in\N$, $\beta\in(0,\infty)$, and any diffusive or periodic $\bc$, we have
    \begin{equation}\label{ZPPPrepr}
        Z_N^{\ssup{\L,\bc}} = \ex^{\abs{\L} \presbcN_\L}\PPP^{\ssup{\bc,N}}_{\L}(\Partn_{\L} = N).
    \end{equation}
\end{lemma}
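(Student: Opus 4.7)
\begin{proofsect}{Proof proposal}
The plan is to combine the Feynman--Kac representation \eqref{eq:FK_Z} for $Z_N^{\ssup{\L,\bc}}$ from Lemma~\ref{lem:FKrepgamma} with a direct computation of $\PPP^{\ssup{\bc,N}}_\L(\Partn_\L = N)$ from the defining intensity \eqref{intensitymeasure}. Essentially, the identity is a bookkeeping statement: the event $\{\Partn_\L = N\}$ decomposes along partitions of $N$ in precisely the same way the sum in \eqref{eq:FK_Z} does, and the normalising exponential $\ex^{\abs{\L}\presbcN_\L}$ is exactly the reciprocal of the joint Poisson normalisation.

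More precisely, I would first recall that under $\PPP^{\ssup{\bc,N}}_\L$ the random variables $(X_k)_{k=1,\dots,N}$ are independent, with $X_k \sim \poi{\q_k^{\ssup{\L,\bc}}/k}$ (and $X_k = 0$ for $k > N$ almost surely, since the intensity measure is supported on $\{1,\ldots,N\}$). The event $\{\Partn_\L = N\} = \{\sum_{k=1}^N k X_k = N\}$ then partitions according to the value $(X_1,\dots,X_N) = (\partition_1,\dots,\partition_N)$, which ranges over $\Pfrak_N$ as defined in \eqref{partitions}. Writing out the joint probability as a product of Poisson weights yields
\begin{equation}
    \PPP^{\ssup{\bc,N}}_\L(\Partn_\L = N) = \sum_{\partition \in \Pfrak_N} \prod_{k=1}^N \ex^{-\q_k^{\ssup{\L,\bc}}/k} \frac{1}{\partition_k!}\Big(\frac{\q_k^{\ssup{\L,\bc}}}{k}\Big)^{\partition_k}.
\end{equation}

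The second step is to factor out the exponential term. Using the definition of $\presbcN_\L$ we have $\sum_{k=1}^N \q_k^{\ssup{\L,\bc}}/k = \abs{\L}\presbcN_\L$, so the product $\prod_{k=1}^N \ex^{-\q_k^{\ssup{\L,\bc}}/k}$ can be pulled out of the partition sum and equals $\ex^{-\abs{\L}\presbcN_\L}$. What remains inside the sum is $\prod_{k=1}^N \q_k^{\partition_k}/(k^{\partition_k} \partition_k!)$, which, because $\partition_k = 0$ for all $k > N$ when $\partition \in \Pfrak_N$ (as $k\partition_k \leq N$ forces $\partition_k=0$ for $k > N$), agrees with the infinite product appearing in \eqref{eq:FK_Z}. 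Substituting \eqref{eq:FK_Z} then gives the claimed identity after multiplying through by $\ex^{\abs{\L}\presbcN_\L}$.

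There is no real obstacle here: the main point to be careful about is the support of the intensity measure (truncation at $k \leq N$) aligning with the constraint $\sum_k k \partition_k = N$, which ensures that the sum over configurations of the PPP is exactly indexed by $\Pfrak_N$ and nothing is lost or double-counted. Given Lemma~\ref{lem:FKrepgamma}, the argument is a one-line rearrangement of series.
\end{proofsect}
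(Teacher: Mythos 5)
Your proof is correct and follows essentially the same route as the paper's: both start from the Feynman--Kac identity \eqref{eq:FK_Z}, decompose the Poisson event $\{\Partn_\L=N\}$ over $\Pfrak_N$ using independence of the $X_k$'s, and factor out the normalising exponential $\ex^{\abs{\L}\presbcN_\L}=\ex^{\sum_{k=1}^N\q_k^{\ssup{\L,\bc}}/k}$. The only (welcome) addition on your part is the explicit remark that the intensity measure is supported on $\{1,\dots,N\}$, which makes the truncation at $k\le N$ line up cleanly with the constraint $\sum_k k\partition_k=N$; the paper leaves this implicit.
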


\begin{proof}We drop all super-indices from the notation in the proof. We start from \eqref{eq:FK_Z}, that is
    \begin{equation}
    \begin{split}
        Z_N&=\sum_{\partition\in\Pfrak_N}\prod_{k\geq 1} \frac{(\q_k/k)^{\partition_k}}{\partition_k !}= \ex^{\sum_{k\geq 1}\frac 1k \q_k}\sum_{\partition\in\Pfrak_N} \prod_{k\in\N} \P(X_k = \partition_k).
    \end{split}
    \end{equation}
    From the independence of the $X_k$'s, since $\sum_k k\partition_k = N$ and $\Partn_{\L} = \sum_k kX_k$, the claim follows. Indeed,
    \begin{equation}
    \begin{split}
        \PPP_{\L}(\Partn_{\L} = N) &= \sum_{\partition\in\Pfrak_N} \PPP_{\L}(\Partn_{\L} = N | X_k = \partition_k\ \forall k)\prod_k \P(X_k = \partition_k)\\
        &= \sum_{\partition\in\Pfrak_N} \prod_k \P(X_k = \partition_k)\, .
    \end{split}
    \end{equation}
\end{proof}

Combining this with Lemma~\ref{lem:FKrepgamma} yields the following representation.

\begin{cor}\label{cor-FKPPPrepr}
For any centred box $\L\subset\R^d$, $N\in\N$, any diffusive or periodic boundary condition $\bc$, and for any $x,y\in\L$, we have
\begin{equation}\label{eq:rdm_bc}
  \gamma_N^{\ssup{\L,{\bc}}}(x,y)=  \sum_{r=1}^{N}g_{r\beta}^{\ssup{\L,{\bc}}}(x,y)
  \frac{\PPP_{\L}^{\ssup{{\bc,N}}}\left(\Partn_{\L}=N-r\right)}{\PPP_{\L}^{\ssup{{\bc,N}}}\left(\Partn_{\L}=N\right)}.
\end{equation}
\end{cor}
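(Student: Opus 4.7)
The plan is to derive the corollary by substituting the Poisson-point-process representation of Lemma \ref{lem-PoissonRepr} into the Feynman--Kac formula \eqref{eq:firstformula}. The only subtle point is that Lemma \ref{lem-PoissonRepr} expresses $Z_N$ in terms of the PPP $\PPP^{\ssup{\bc,N}}_\L$ (whose intensity is truncated at loop length $N$), while the claimed identity \eqref{eq:rdm_bc} uses this \emph{same} truncation uniformly in all the ratios $Z_{N-r}/Z_N$, rather than re-truncating to intensity level $N-r$ in the numerator. So the main task I would tackle first is the auxiliary identity
\begin{equation*}
Z_{N-r}^{\ssup{\L,\bc}} \;=\; \ex^{\abs{\L}\,\presbcN_\L}\,\PPP^{\ssup{\bc,N}}_{\L}\bigl(\Partn_{\L} = N-r\bigr),\qquad 0\le r\le N-1.
\end{equation*}

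To prove this auxiliary identity, I would repeat the computation from the proof of Lemma \ref{lem-PoissonRepr}, but carried out against the $N$-level PPP rather than the $(N-r)$-level one. The key observation is that every partition $m\in\Pfrak_{N-r}$ satisfies $m_k=0$ for $k>N-r$, so replacing $\prod_{k\in\N}$ by $\prod_{k=1}^{N}$ in \eqref{eq:FK_Z} leaves $Z_{N-r}$ unchanged. Writing each factor $\q_k^{m_k}/(k^{m_k}m_k!)$ as $\ex^{\q_k/k}\,\P(X_k=m_k)$ with $X_k\sim\poi{\q_k/k}$ the independent Poisson coordinates of $\PPP^{\ssup{\bc,N}}_\L$ pulls out the common prefactor $\ex^{\sum_{k=1}^{N}\q_k/k}=\ex^{\abs{\L}\,\presbcN_\L}$. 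Summing over $m\in\Pfrak_{N-r}$ and using the equivalence $\{\sum_k km_k=N-r\}\Leftrightarrow\{\Partn_\L=N-r\}$ together with independence, one recognises the right-hand side as $\ex^{\abs{\L}\,\presbcN_\L}\PPP^{\ssup{\bc,N}}_\L(\Partn_\L=N-r)$.

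Substituting this into the Feynman--Kac representation \eqref{eq:firstformula}, along with Lemma \ref{lem-PoissonRepr} for the denominator $Z_N$, the two exponential prefactors $\ex^{\abs{\L}\,\presbcN_\L}$ cancel term by term in the sum over $r$, leaving exactly \eqref{eq:rdm_bc}. There is no genuine obstacle here: the entire content of the corollary is the bookkeeping observation that the truncation level of the underlying PPP can be taken to be $N$ uniformly for all denominators, since every partition of $N-r\le N$ is automatically supported in $\{1,\ldots,N\}$.
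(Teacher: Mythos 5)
Your proof is correct and follows the intended route: combine Lemma \ref{lem:FKrepgamma} with Lemma \ref{lem-PoissonRepr}, but upgraded so the same truncation level $N$ appears in every ratio. The paper dispatches the corollary with the single word "combining" and does not spell out the truncation bookkeeping; your auxiliary identity $Z_{N-r}^{\ssup{\L,\bc}} = \ex^{\abs{\L}\presbcN_\L}\PPP^{\ssup{\bc,N}}_{\L}(\Partn_{\L} = N-r)$ is exactly the observation needed to make that step rigorous, and your proof of it (every partition of $N-r$ is supported in $\{1,\dots,N\}$, so the factorisation into Poisson probabilities times $\ex^{\abs{\L}\presbcN_\L}$ goes through unchanged) is sound.
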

This will be the starting point of our analysis.

The formula in~\eqref{eq:rdm_bc} allows us to introduce what we call {\em free boundary conditions}. In place of the heat-equation solution $g_{k\beta}^{\ssup{ \L,\bc}}$ with some boundary condition $\bc$, we consider the solution with {\em free boundary condition}, i.e., the standard kernel $g_{k\beta}^{\ssup{\free}}(x,y)=(2\pi\beta k)^{-d/2}\exp\{-\frac 12 |x-y|^2/2\beta k\}$. Substituting this in \eqref{eq:rdm_bc} is mathematically sound and simplifies and streamlines the formulas and asymptotics; however, one needs to keep in mind that the kernel $\gamma_N^{\ssup{\free}}$ does not come from any quantum model and is not the trace of any symmetrised operator. Indeed, the restriction of $g_{k\beta}^{\ssup{\free}}$ to a box $\L$ does not satisfy the Chapman--Kolmogorov equations, hence Lemma~\ref{lem:FKrepgamma} does not apply to the free-bc case. Nevertheless, we include the free-bc case into our analysis and tacitly introduce all objects like $\q_{k}^{\ssup{\L,{\bc}}}$, $\nu^\ssup{{\bc}}_{\L}$, the Poisson process, and so on also for $\bc=\free$, in which case, the above definitions are for $N=+\infty$.

The analogous result of Theorem \ref{thm:ODLRO} holds:
\begin{proposition}[ODLRO for the free Bose gas with free boundary condition]\label{prop:ODLRO_free}
    For $d\geq 3$, let $\L_N$ be the centred box $L_N  U$  with volume $L_N^d=N/\rho$ where $U=[-\frac 12,\frac 12]^d$. Fix $\beta,\rho\in(0,\infty)$. Then analogous asymptotics of $\gammabc{\free}$ as in \eqref{eq:ODLRO_sup}, respectively in \eqref{eq:ODLRO_sub}, hold, see Proposition \ref{prop:ODLRO_sup_free} for the precise statement. 
\end{proposition}

Our proof of Proposition~\ref{prop:ODLRO_free} is in Section~\ref{sec:ODLROfree}.
 It is based on an extended (\lq spatial\rq) version of our Poissonian representation and is pretty different from the proof of Theorem~\ref{thm:ODLRO}.

\subsection{Particle numbers in short and long loops}\label{sec-loops}

One of the purposes of the present paper is to analyse the behaviour of the statistics of the loop lenghts and the number of particles in the loops in the Bose gas. These quantities play a decisive role in the understanding of the Bose gas and are since decades under interest of many researchers. They are interesting mathematical objects, worth to be studied on their own. After having introduced the crucial PPP in Section~\ref{sec-PPP}, we have now a mathematical frame to talk about these quantities. Indeed, in the frame of the Feynman--Kac representation of Section~\ref{sec:FKform}, we  introduce a natural random variable with values in $\Pfrak_N$ with distribution
\begin{equation}
 \Pfrak_N\ni   (\partition_r)_{r=1}^N= \partition\mapsto \frac{1}{Z_N}\prod_{k=1}^N\frac{\q_k^{\partition_k}}{k^{\partition_k}\partition_k!}\, .
\end{equation}
Thanks to the PPP formulation from Section~\ref{sec-PPP}, its distribution can be written as
\begin{equation}
    \PPP^{\ssup{\bc,N}}_\L\big((X_r)_r\in\cdot\, \vert \Partn_\L = N\big)\, .
\end{equation}
Hence, we can now talk about particle numbers in certain loops in the framework of the process $(X_r)_{r=1}^N$.  Denote the number of particles in loops of length between $l_1$ and $l_2$ by
\begin{equation}\label{particlesinloops}
{\rm N}_\L^{\sqsup{l_1,l_2}}=\sum_{r=l_1}^{l_2}r X_r,\qquad l_1,l_2\in\N_0\, .
\end{equation}
Since Feynman's~\cite{Fey53} proposal to conceive the lengths of the longest loops as an order parameter, a high interest lies on the question of whether or not macroscopically long loops occur, whether this coincides with the occurrence of BEC, and whether or not the long loops themselves have something to do with the condensate. Most of the answers that are given for the free gas in the literature apply only to free boundary conditions. However, as we will see, the behaviour of the lengths of the long loops differs significantly between the gas with diffusive or periodic boundary conditions, as well as the model with free boundary conditions. In fact, while for the latter there is only one scale of the length distribution, as their are not influenced by the box, for diffusive or periodic boundary conditions, in the presence of a macroscopic box, the distribution of the loop lengths is significantly different for small and large lengths. The critical threshold lies close to $L_N^2$, the square of the diameter of the box $\L_N$. For example, for Dirichlet zero boundary condition, the long loops are probabilistically suppressed in a stretched-exponential way, while their probability under free boundary condition has polynomial tails. Actually, we will put the threshold between short and long loops at
\begin{equation}\label{Nminus}
    \Nmin= \lceil L_N^2\log^{1/2}(N)\rceil,\qquad N\in\N\, .
\end{equation}
Note that $\Nmin$ is almost equal to the mixing time ($L_N^2$) of Brownian motion conditioned to stay in $\L_N$. The reason for this choice of threshold is that typical loops shorter than $\Nmin$ behave then like Brownian motions, whereas longer loops follow the stationary distribution (the $\log$-correction is for technical convenience). Loops with length $\leq \Nmin$ are called \emph{short}, the others \emph{long}. We denote the number of particles in short and in long loops, respectively, by
\begin{equation}
    \Ns_\L={\rm N}_\L^{\sqsup{1,\Nmin}}=\sum_{k=1}^{\Nmin} k X_k\qquad\mbox{and }\qquad \Nl_\L={\rm N}_\L^{\sqsup{\Nmin+1,N}}=\sum_{k > \Nmin} k X_k\, .
\end{equation}
Our first result concerning the number of particles in short loops in the thermodynamic limit is the following. We see that $\frac 1 {|\L_N |} {\Ns_{\L_N}}$ converges to $\rhoc$, the critical threshold defined in \eqref{Rhoc}. Actually, the deviations of this quantity from its expectation are rather small: the probability of such a deviation is stretched-exponentially small in $N$. To demonstrate this, we divide the short loops into finite length and the remainder. For $R\in\N$, we introduce $\rhoc^{\ssup R}=(2\pi \beta)^{-d/2}\sum_{k=1}^R k^{-d/2}$, which converges to $\rhoc$ as $R\to\infty$.

\begin{proposition}\label{prop-shortloopsasy}
Fix any boundary condition $\bc\in\gk{\diff,\per, \free}$. Fix $d\geq 3$ and $\rho\in(\rhoc,\infty)$, and consider the centred box $\L_N$ with volume $N/\rho$. Then, for any $\e>0$ and any $R\in\N$, there exists $C_{\e,R}>0$ such that, for any large $N$,
    \begin{equation}
        \PPP^{\ssup{\bc,N}}_{\L_N}\rk{\abs{\frac{1}{\alN}\Partn_{\L_N}^{\sqsup{1,R}}-\rhoc^{\ssup{R}}}>\e\ \Big\vert\ \Partn_{\L_N}=N } \leq \ex^{-C_{\e,R}\alN}\, .
    \end{equation}
    Moreover, for any (large) $\kappa>0$ and for any (small) $\e>0$, for diffusive or periodic boundary conditions, for any large $N$,
    \begin{equation}\label{asymiddleshortloops}
        \PPP^{\ssup{\bc,N}}_{\L_N}\rk{\abs{\frac{1}{\alN}\Partn_{\L_N}^{\sqsup{R+1,\Nmin}}-(\rhoc-\rhoc^{\ssup{R}})}>\e\ \Big\vert\ \Partn_{\L_N}=N } \leq \ex^{-\kappa\alN^{1-\frac2d}}\, ,
    \end{equation}
    while for free boundary conditions, \eqref{asymiddleshortloops} is true for  $\Partn_{\L_N}^{\sqsup{R+1,\Nmin}}$ replaced with $\Partn_{\L_N}^{\sqsup{R+1,N}}$.
\end{proposition}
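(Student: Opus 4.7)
The overall strategy is to exploit the independence of the $(X_k)_k$ under the unconditioned PPP $\PPP_{\L_N}^{\ssup{\bc,N}}$. For any event $A$ measurable with respect to the disjoint block of coordinates in question, I would write
\[
\PPP_{\L_N}^{\ssup{\bc,N}}\bigl(A \,\big|\, \Partn_{\L_N} = N\bigr) \;\leq\; \frac{\PPP_{\L_N}^{\ssup{\bc,N}}(A)}{\PPP_{\L_N}^{\ssup{\bc,N}}(\Partn_{\L_N} = N)},
\]
reducing the problem to (a) an \emph{unconditional} exponential deviation estimate on a sum of independent Poisson variables in the numerator, and (b) a sub-exponential lower bound for the denominator. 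The denominator is handled via \eqref{ZPPPrepr}: since the free energy ${\rm f}(\rho) = {\rm f}(\rhoc)$ is constant on the supercritical plateau and $\presbcN_{\L_N}/\alN$ converges to the critical pressure, one obtains $\PPP_{\L_N}^{\ssup{\bc,N}}(\Partn_{\L_N} = N) = Z_N^{\ssup{\L_N,\bc}} e^{-\alN \presbcN_{\L_N}} \geq e^{-o(\alN^{1-2/d})}$, which is negligible against any exponential bound at speed $\alN^{1-2/d}$ in the numerator.

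For part (i), $\Partn_{\L_N}^{\sqsup{1,R}}$ involves only $R$ independent Poissons whose parameters $\q_k^{\ssup{\L_N,\bc}}/k$ grow linearly in $\alN$ with asymptotic density $(2\pi k\beta)^{-d/2}/k$; this follows from a short-time heat-kernel expansion in which the boundary layer has Lebesgue volume $o(\alN)$ for each fixed $k$. A finite-dimensional Cramer theorem (or a direct Markov-based Chernoff estimate applied to each Poisson factor) then gives $\PPP_{\L_N}^{\ssup{\bc,N}}\bigl(\bigl|\Partn_{\L_N}^{\sqsup{1,R}}/\alN - \rhoc^{\ssup R}\bigr| > \e\bigr) \leq e^{-C(\e,R)\alN}$, and dividing by the denominator yields the stated bound (with a possibly smaller $C_{\e,R}$).

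For part (ii), I would apply a Bernstein-type deviation inequality to $\Partn_{\L_N}^{\sqsup{R+1,\Nmin}} = \sum_{k=R+1}^{\Nmin} k X_k$, driven by the variance $V = \sum_k k \q_k^{\ssup{\L_N,\bc}}$ and the maximum step $M = \Nmin$. One first identifies the mean: $\E\bigl[\Partn_{\L_N}^{\sqsup{R+1,\Nmin}}\bigr]/\alN \to \rhoc - \rhoc^{\ssup R}$ as $R,N \to \infty$, which hinges on the choice $\Nmin = \lceil L_N^2 \log^{1/2}(N)\rceil$: the expansion $g_{k\beta}^{\ssup{\L_N,\bc}}(x,x) = (2\pi k\beta)^{-d/2} + O(e^{-c L_N^2/k})$ away from the boundary yields the free density, while the tail beyond $\Nmin$ is super-polynomially suppressed by the spectral gap of $-\tfrac12\Delta^{\ssup{\L_N,\bc}}$. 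To reach the target rate $\exp(-\kappa\alN^{1-2/d})$ with arbitrarily large $\kappa$, I would split the index set at some $k\asymp L_N^2$: on the moderate range $[R+1,cL_N^2]$, Bernstein gives an exponent of order $\alN^{1-2/d}$ whose prefactor grows as $c^{-1}$, while on the near-$\Nmin$ range $(cL_N^2,\Nmin]$ the expected mass is of sub-leading order $\alN L_N^{-(d-2)}$, so the rare deviation is controlled by a direct Poisson-tail Chernoff bound on the \emph{number} of such loops, which gains the needed extra $\log^{1/2}(N)$ factor in the exponent precisely because $\Nmin/L_N^2 = \log^{1/2}(N)$.

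The main obstacle is this delicate two-scale calibration in part (ii): the moderate-range Bernstein inequality and the near-$\Nmin$ loop-count Chernoff estimate must be combined so that the joint exponent exceeds $\kappa\alN^{1-2/d}$ for arbitrarily large $\kappa$ and arbitrarily small $\e$, uniformly over diffusive and periodic boundary conditions and over the boundary-layer corrections to $\q_k^{\ssup{\L_N,\bc}}$ at intermediate $k$. The free-bc case is handled identically with the simplification $\q_k^{\ssup{\free}} = \alN(2\pi k\beta)^{-d/2}$, which is exact for all $k$ and allows the upper summation index in \eqref{asymiddleshortloops} to be extended from $\Nmin$ to $N$.
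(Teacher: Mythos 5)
Your high-level reduction $\PPP^{\ssup{\bc,N}}_{\L_N}(A\,\vert\,\Partn_{\L_N}=N)\le\PPP^{\ssup{\bc,N}}_{\L_N}(A)/\PPP^{\ssup{\bc,N}}_{\L_N}(\Partn_{\L_N}=N)$ and your Chernoff/Bernstein treatment of the numerator are the same route as the paper (which invokes Propositions~\ref{prop:ShortLoops}, \ref{prop:MeanShort} for the numerator and Lemma~\ref{PropsotionUpperBoundTotal} for the denominator). The gap is in the denominator, and it is a genuine one. You write $\PPP^{\ssup{\bc,N}}_{\L_N}(\Partn_{\L_N}=N)\ge \ex^{-o(\alN^{1-2/d})}$, ``since the free energy is constant on the supercritical plateau.'' The free-energy convergence $\frac1{\alN}\log Z_N^{\ssup{\L_N,\bc}}\to -\beta\,{\rm f}(\rho)$ and $\presbcN_{\L_N}\to p^{\ssup\free}(0)$ only yield $\log\PPP^{\ssup{\bc,N}}_{\L_N}(\Partn_{\L_N}=N)=o(\alN)$; they are powerless at the much finer scale $\alN^{1-2/d}$ and cannot control the sign or magnitude of the correction term. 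Worse, for any diffusive bc with $\lambda_1>0$ (in particular Dirichlet) your asserted lower bound is actually \emph{false}: combining Lemma~\ref{PropsotionUpperBoundTotal} with Proposition~\ref{prop:MedLongLoops} one sees that $\PPP^{\ssup{\bc,N}}_{\L_N}(\Partn_{\L_N}=N)=\ex^{-\Theta(\alN^{1-2/d})}$, not $\ex^{-o(\alN^{1-2/d})}$. If the denominator decays at full order $\alN^{1-2/d}$ and you only know the numerator decays at the same order with some fixed prefactor, the quotient gives nothing.

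What makes the conditioned bound work is the \emph{explicit} denominator estimate of Lemma~\ref{PropsotionUpperBoundTotal}: one lower-bounds $\PPP^{\ssup{\bc,N}}_{\L_N}(\Partn_{\L_N}=N)$ by forcing the overshoot mass $(\rho-\rhoc)\alN$ into a single long loop $X_{N-m}=1$ while the $R_N$-short loops behave typically and all intermediate $X_j$ vanish, producing the rate $\ex^{-(\rho-\rhoc)\beta\lambda_1\alN^{1-2/d}(1+o(1))}$ for $\lambda_1>0$ (polynomial when $\lambda_1=0$). Only then can you pick the tilting parameter in the numerator Chernoff bound large enough (the paper takes $\alpha=sL^{-2}$ with $s=\kappa/\eps$) so that the numerator exponent exceeds the denominator's by a margin of at least $\kappa\alN^{1-2/d}$. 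Without this construction your argument proves nothing for part (ii); the soft free-energy estimate alone leaves the quotient uncontrolled. The rest of your outline (finite-sum Cramér for part (i), exponential moments with tilt at scale $L^{-2}$ for part (ii), $\q_k^{\ssup\free}=\alN(2\pi\beta k)^{-d/2}$ in the free case) is in the same spirit as the paper's Propositions~\ref{prop:ShortLoops} and \ref{prop:MeanShort}.
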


That is, the particle number in loops of length in $(R_N,\Nmin]$, respectively in $(R_N,N]$, is $o(|\L_N|)$ with very high probability. The proof of Proposition~\ref{prop-shortloopsasy} follows from Propositions~\ref{prop:ShortLoops} and \ref{prop:MeanShort}, together with Lemma~\ref{PropsotionUpperBoundTotal}.

Now let us turn to the particle numbers in long loops. The reader will see that, by combining \eqref{eq:expSubcrit} and \eqref{eq:sumSubcrit}, one gets that $\EPP_{\L_N}^{\ssup{\bc,N}}[\frac{1}{\alN}\Partn_{\L_N}]\rightarrow \rho\wedge\rhoc$, hence, $\Nl_{\L_N}$ is $o(|\L_N|)$ under the free measure $\PPP_{\L_N}^{\ssup{\bc,N}}$. However, we are interested, for $\rho>\rhoc$, in the behaviour under the conditioning on $\Partn_{\L_N}$ being equal to $N=\rho |\L_N|$, i.e., on having many more particles than in the `usual' behaviour. From Proposition~\ref{prop-shortloopsasy}, we see that $\Ns_{\L_N}$ will hardly contribute to this extremal event, so $\Nl_{\L_N}$ will contribute practically everything. It turns out below that the probabilistically-cheapest way to do so is to create a finite number of macroscopically-large loops that altogether make up for the requested amount $(\rho-\rhoc)|\L_N|$. However, the precise way to do this differs between diffusive or periodic boundary conditions, and free boundary conditions.

For a realisation $(X_r)_{r=1}^N$ of the Poisson point process, let $L^{\ssup N}_1\geq L^{\ssup N}_2 \geq \dots\geq 0$ denote the lengths of all the loops in the process, ordered according to their size. Hence, $L^{\ssup N}_1=\max\{r\colon X_r>0\}=r^*>0$, and $L^{\ssup N}_1 =L^{\ssup N}_2=\dots = L^{\ssup N}_{X_{r^*}}>L^{\ssup N}_{X_{r^*}+1} $ and so on. In particular, $\sum_{i\in\N}L^{\ssup N}_i= \sum_{r=1}^N r X_r$. On the event $\{{\rm N}_\L=N\}$, the sequence $(L^{\ssup N}_i)_{i\in\N}$ forms a partition of $N$.

Let us recall that the Poisson--Dirichlet distribution with parameters $0$ and $1$ (denoted $\textsf{PD}_1$) is given as the joint distribution of the random variables $(Y_n\prod_{k=1}^{n-1} (1-Y_k))_{n\in\N}$, where $(Y_n)_{n\in\N}$ is an i.i.d.~sequence of ${\rm Beta}(1,1)$-distributed random variables (i.e., uniformly over $[0,1]$ distributed). Note that the sum of the elements of a $\textsf{PD}_1$-distributed sequence is equal to one, i.e., this distribution is in fact a random partition. It is well-known in asymptotics for random permutations, as the joint distribution of the lengths of all the cycles of a uniformly picked random permutation of $1,\dots,N$, ordered according to their sizes and normalized by a factor $1/N$, converges weakly to $\textsf{PD}_1$.

We obtain the following:
\begin{proposition}[Lengths of long loops]\label{prop:PoissonDirichlet}
Fix any $\bc\in \gk{\diff,\per}$.  Fix $\rho\in(\rhoc,\infty)$ and consider the centred box $\L_N$ with volume $N/\rho$. Then, under $\PPP_\L^{\ssup{\bc,N}}$, conditional on $\{\Partn_\L=N\}$, as $N\to \infty$,
\begin{equation}
   \frac {(L^{\ssup N}_i)_{i\in\N}}{|\L_N|(\rho-\rhoc)}\Longrightarrow
    \textsf{PD}_1.
\end{equation}
\end{proposition}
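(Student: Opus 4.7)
The strategy is to use the Poisson point process representation of Section~\ref{sec-PPP} to reduce the statement to the convergence of the long-loop subprocess to a Poisson point process whose normalised atoms are $\textsf{PD}_1$-distributed by the classical Kingman--Poisson--gamma construction. Under $\PPP_{\L_N}^{\ssup{\bc,N}}(\cdot\mid\Partn_{\L_N}=N)$, the loop counts $(X_k)_{k=1}^N$ form a Gibbs measure with weights $\prod_k(\q_k^{\ssup{\L_N,\bc}}/k)^{X_k}/X_k!$ conditioned on $\sum_k kX_k=N$. By Proposition~\ref{prop-shortloopsasy}, $\Ns_{\L_N}=\rhoc\alN(1+o(1))$ with stretched-exponentially small deviation probability, whence $\Nl_{\L_N}=(\rho-\rhoc)\alN(1+o(1))$ with the same error; since loops of length $\leq\Nmin$ have rescaled length of order $\Nmin/\alN=L_N^{2-d}\log^{1/2}(N)\to 0$ after dividing by $\alN$, only the long-loop subprocess $(X_k)_{k>\Nmin}$ contributes to the macroscopic part of $(L_i^{\ssup N})_i$ in the limit.

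Next, I analyse the long-loop subprocess via an exponential tilt. By spectral expansion of the heat kernel, uniformly for $k>\Nmin=\lceil L_N^2\log^{1/2}(N)\rceil$,
\[
    \q_k^{\ssup{\L_N,\bc}}= \ex^{-k\beta\lambda_1^{\ssup{\bc}}}\bigl(1+o(1)\bigr),
\]
where $\lambda_1^{\ssup{\bc}}$ is the smallest eigenvalue of $-\tfrac12\Delta^{\ssup{\L_N,\bc}}$ on $\L_N$ (so $\lambda_1^{\ssup{\per}}=\lambda_1^{\ssup{\neu}}=0$, while $\lambda_1^{\ssup{\dir}}\asymp L_N^{-2}$ in the Dirichlet case). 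Introducing a chemical potential $\mu_N<\beta\lambda_1^{\ssup{\bc}}$ for the long loops, the tilted Poisson intensity on $\{k>\Nmin\}$ equals $\ex^{-a_N k}/k$ to leading order, with $a_N=\beta\lambda_1^{\ssup{\bc}}-\mu_N>0$. Choosing $\mu_N$ so that the tilted expected long-loop mass $\sum_{k>\Nmin}\ex^{-a_N k}\sim 1/a_N$ equals the target $(\rho-\rhoc)\alN$ forces $a_N=(1+o(1))/[(\rho-\rhoc)\alN]$, and one checks that $a_N\Nmin\to 0$ (since $d\geq 3$), so that the truncation at $\Nmin$ becomes negligible after rescaling.

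Rescaling loop lengths by $\alN$ (setting $\ell=k/\alN$), the tilted long-loop point process converges in distribution to a Poisson point process $\Xi$ on $(0,\infty)$ with intensity $\ell^{-1}\ex^{-\ell/(\rho-\rhoc)}\,\d\ell$. By Kingman's Poisson--gamma representation, a PPP on $(0,\infty)$ with intensity $x^{-1}\ex^{-x}\,\d x$ has almost surely finite total mass $\Sigma\sim\text{Gamma}(1,1)$, and its normalised atoms $(x/\Sigma)_{x\in\Xi}$ sorted in decreasing order form a $\textsf{PD}_1$-distributed sequence, independent of $\Sigma$. Applying this after the further change of variable $x=\ell/(\rho-\rhoc)$, the rescaled ordered long-loop lengths $L_i^{\ssup N}/[(\rho-\rhoc)\alN]$ converge weakly to $\textsf{PD}_1$.

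The main technical obstacle will be the rigorous transfer of the conditioning $\{\Partn_{\L_N}=N\}$ through the scaling limit, since this is a point event whose probability vanishes at a specific polynomial rate. One needs a sharp local central limit theorem for the tilted total long-loop mass $\sum_{k>\Nmin}kX_k$, together with a joint concentration estimate for $(\Ns_{\L_N},\Nl_{\L_N})$ at the central-limit scale slightly finer than Proposition~\ref{prop-shortloopsasy}. Given these, the independence of $(x/\Sigma)$ and $\Sigma$ in the limit makes the conditioning on the total mass effectively trivial, and the ratios of conditional to unconditional probabilities can be controlled uniformly on finite-dimensional marginals, whence the weak convergence to $\textsf{PD}_1$ follows.
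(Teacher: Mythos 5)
Your proposal takes a genuinely different route from the paper, and the route is conceptually attractive, but it leaves open exactly the technical step that the paper has to work hardest to prove.

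\textbf{Comparison of routes.} The paper proves convergence of the finite-dimensional densities of the ordered lengths $(L^{\ssup N}_1,\dots,L^{\ssup N}_s)$ to the known $s$-dimensional density $f^{\ssup s}$ of $\textsf{PD}_1$, by decomposing the conditional probability into a short-loop factor (concentrated at $\rhoc\alN$ by Corollary~\ref{Cor:shortloopsdev}) times an explicit product of Poisson probabilities for the prescribed long loops, and then invoking the sharp local-limit asymptotics of Proposition~\ref{prop:MedLongLoops}, which in turn rests on \cite[Thm.~4.13]{arratia2003logarithmic}. Your proposal instead tilts the long-loop intensity by a chemical potential $\mu_N$ so that, after rescaling lengths by $\alN$, the intensity converges to $\ell^{-1}\ex^{-\ell/(\rho-\rhoc)}\,\d\ell$, and then appeals to Kingman's Poisson--gamma construction of $\textsf{PD}_1$ together with the independence of the normalized atoms from the total mass $\Sigma$. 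This tilting-plus-Kingman picture is a nice structural explanation of \emph{why} $\textsf{PD}_1$ appears, and it treats both $\lambda_1=0$ and $\lambda_1>0$ in one stroke. The tilt $a_N\sim 1/[(\rho-\rhoc)\alN]$ and the check $a_N\Nmin\to 0$ for $d\geq 3$ are both correct.

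\textbf{The gap.} You flag the transfer of the conditioning $\{\Partn_{\L_N}=N\}$ through the scaling limit as the "main technical obstacle", but you do not close it. This is not a minor detail: the tilted total long-loop mass is \emph{not} concentrated on $(\rho-\rhoc)\alN$, it has fluctuations of order $\alN$ (its rescaled limit is $(\rho-\rhoc)\cdot\mathrm{Exp}(1)$), so what you need is a genuine local density estimate for a discrete point event, uniformly over a macroscopic window, not a standard local CLT around a concentration point. Without this, the Kingman independence of $(\ell_i/\Sigma)$ and $\Sigma$ cannot be transferred from the limiting object to the pre-limit conditioned measures. In the paper, this is exactly the content of Proposition~\ref{prop:MedLongLoops}: the asymptotics $\PPP^{\ssup{\bc,N}}_{\L_N}({\rm N}^{\sqsup{\Nmin+1,\M}}_{\L_N}=k)\sim p(y)\ex^{-\beta\lambda_1 k\alN^{-2/d}}/\Nmin$ (or $/\M$) uniformly over $\Nmax\le\M\le k\le N$ with $k/\M\to y$. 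This is where the combinatorial heavy lifting occurs and is precisely the ingredient your sketch postpones. A secondary (and more easily filled) gap is the coupling between $\Ns$ and $\Nl$ under the conditioning: the paper handles this by the convolution structure and a slightly refined concentration estimate (the "$\eps$ replaced by $\eps_N$" addendum in Corollary~\ref{Cor:shortloopsdev}); your argument implicitly uses the asymptotic independence of short and long loops given the conditioning, which also needs the same local-limit input to justify.

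In short: the route is valid and would yield the theorem once your "main technical obstacle" is resolved, but that obstacle \emph{is} the proof, so as written the proposal is incomplete.
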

\begin{remark}
The above proposition states that ODLRO occurs for the same particle densities as those for which macroscopic loop(s) occur, and that the number of particles in the long loops is asymptotically equal to the mass of the condensate. However, the precise distribution of the macroscopic loop lengths depends on the boundary condition. 

Indeed, it is known (see, e.g.,~\cite[Corollary 5.4]{VOGEL2023104227}) that for free boundary conditions, with high probability under the above conditioning, there exists a single macroscopic loop with length $\sim (\rho-\rhoc)|\L_N|$, and the length of the second-longest is $o(|\L_N|)$. In fact, the loop lengths starting from the second follow the limiting order-statistics for i.i.d.~Pareto-distributed variables on a scale $o(|\L_N|)$. In particular, we do not observe the Poisson--Dirichlet distribution in the limit.
Furthermore, macroscopic loops can be described (after taking the limit) by random interlacements. We note that random interlacements can be generated from a single large loop, multiple macroscopic loops, or a diverging number of mesoscopic loops: when proving a convergence to random interlacements, information about the lengths of the underlying loops is lost, see~\cite{bouchot2024}.
\end{remark}

Our proof of Proposition~\ref{prop:PoissonDirichlet} is in Section~\ref{sec-prooflongloops}. It is based on our Poissonian representation and differs significantly from other proofs in the literature for some special cases.

\subsection{The free energy}\label{sec-freeenergy}
From our results, we may easily identify the free energy for all boundary conditions.
For the proof of this statement, see Appendix~\ref{sec:remaining}.
\begin{proposition}[Identification of the free energy]\label{PropFreeEnergy}
Fix $N\in\N$ and $\rho\in(0,\infty)$ and consider the centred box $\L_N$ of volume $N/\rho$. Fix any boundary condition $\bc\in \gk{\diff,\per, \free}$. Then, for $\rho\not=\rhoc$ 
\begin{equation}\label{freeenergyident}
    {\rm f}(\rho)= -\frac 1\beta \lim_{N\to\infty}\frac 1{|\L_N|}\log  Z_N^{\ssup{\L_N,{\bc}}}=
    \mu(\rho)\rho-\frac 1 \beta \sum_{j\in\N}\frac{\ex^{\beta \mu(\rho) j}}{j \rk{2\pi \beta j}^{d/2}},
\end{equation}
where $\mu(\rho)$ is defined by
\begin{equation}\label{mudef}
\rho\wedge \rhoc=\sum_{k\in\N}\ex^{\beta \mu(\rho) k}(2\pi \beta k)^{-d/2}.
\end{equation}
\end{proposition}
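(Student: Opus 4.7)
The plan is to start from the Poisson representation of Lemma~\ref{lem-PoissonRepr} and apply an exponential tilting by a chemical potential $\mu\leq 0$. Since $N=\sum_k km_k$ for every $m\in\Pfrak_N$, inserting the identity $1=\ex^{-\beta\mu N}\prod_k\ex^{\beta\mu km_k}$ into \eqref{eq:FK_Z} and absorbing the factors $\ex^{\beta\mu km_k}$ into the Poisson intensity produces the tilted representation
\[
    Z_N^{\ssup{\L_N,\bc}}=\ex^{-\beta\mu N}\,\exp\Big(\sum_{k=1}^N\frac{\ex^{\beta\mu k}}{k}\q_k^{\ssup{\L_N,\bc}}\Big)\,\PPP^{\ssup{\bc,N,\mu}}_{\L_N}\bigl(\Partn_{\L_N}=N\bigr),
\]
where $\PPP^{\ssup{\bc,N,\mu}}_{\L_N}$ is the PPP on $\N$ with intensity $\sum_k\frac{\ex^{\beta\mu k}}{k}\q_k^{\ssup{\L_N,\bc}}\delta_k$, i.e.\ the $\mu$-tilt of $\nu^{\ssup{\bc,N}}_{\L_N}$ from Section~\ref{sec-PPP}. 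Taking logarithms and dividing by $|\L_N|$, with the choice $\mu=\mu(\rho)$ from \eqref{mudef} (and $\mu(\rho)=0$ when $\rho>\rhoc$), the quantity $\frac{1}{|\L_N|}\log Z_N^{\ssup{\L_N,\bc}}$ splits into $-\beta\mu(\rho)\rho$ plus a pressure integral plus the log-probability of the tilted PPP hitting exactly $N$ particles.

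For the pressure term, the heat-kernel asymptotics $\q_k^{\ssup{\L_N,\bc}}/|\L_N|\to(2\pi\beta k)^{-d/2}$ (pointwise in $k$, valid for every $\bc\in\{\diff,\per,\free\}$) together with a uniform upper bound of the form $\q_k^{\ssup{\L_N,\bc}}\leq C(\beta,d)|\L_N|k^{-d/2}$ allow dominated convergence to deliver
\[
\frac{1}{|\L_N|}\sum_{k=1}^N\frac{\ex^{\beta\mu(\rho)k}}{k}\q_k^{\ssup{\L_N,\bc}}\;\longrightarrow\;\sum_{j\in\N}\frac{\ex^{\beta\mu(\rho)j}}{j(2\pi\beta j)^{d/2}}.
\]
Multiplying the sum of the first two contributions by $-1/\beta$ reproduces the right-hand side of \eqref{freeenergyident}.

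The decisive step is to show that
\[
\frac{1}{|\L_N|}\log\PPP^{\ssup{\bc,N,\mu(\rho)}}_{\L_N}\bigl(\Partn_{\L_N}=N\bigr)\longrightarrow 0.
\]
For $\rho<\rhoc$, equation \eqref{mudef} makes $N$ the typical value of $\Partn_{\L_N}$ under the tilted PPP; a local central limit theorem for the independent sum $\sum_k kX_k$ (whose variance is $O(|\L_N|)$ because the exponential factor $\ex^{\beta\mu(\rho)k}$ tames the heavy tail when $\mu(\rho)<0$) yields $\PPP^{\ssup{\bc,N,\mu(\rho)}}_{\L_N}(\Partn_{\L_N}=N)\gtrsim|\L_N|^{-1/2}$. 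For $\rho>\rhoc$, one has $\mu(\rho)=0$ and the (un)tilted mean density equals $\rhoc<\rho$; we lower-bound the probability by the joint occurrence of the \emph{independent} events that (i) the short-loop particle count $\Ns_{\L_N}$ takes a specific value $n^{\star}$ in the $O(\sqrt{|\L_N|})$-neighbourhood of $\rhoc|\L_N|$ (probability $\gtrsim|\L_N|^{-1/2}$ by a local CLT for $\Ns_{\L_N}$), and (ii) exactly one Poisson point of length $\ell_N=N-n^{\star}\sim(\rho-\rhoc)|\L_N|$ is present and no further Poisson points of length $>\Nmin$ occur (probability of order $\nu^{\ssup{\bc,N}}_{\L_N}(\{\ell_N\})=O(|\L_N|^{-d/2})$ by the heat-kernel bound on $\q_{\ell_N}^{\ssup{\L_N,\bc}}$). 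Both factors being only polynomially small in $|\L_N|$ forces $\log/|\L_N|\to 0$.

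The main obstacle is the supercritical lower bound: the choice of $n^{\star}$ must ensure simultaneously that $\PPP(\Ns_{\L_N}=n^{\star})$ is of the optimal $|\L_N|^{-1/2}$-order and that the residue $\ell_N=N-n^{\star}$ sits in a range where $\nu^{\ssup{\bc,N}}_{\L_N}(\{\ell_N\})$ is of the full $|\L_N|^{-d/2}$-order. The CLT-type fluctuations of $\Ns_{\L_N}$ give a uniform lower bound on the single-point probabilities $\PPP(\Ns_{\L_N}=n)$ over a $\sqrt{|\L_N|}$-range of integers $n$ around $\rhoc|\L_N|$, and the smoothness of $\ell\mapsto\nu^{\ssup{\bc,N}}_{\L_N}(\{\ell\})$ on the macroscopic scale keeps $\nu^{\ssup{\bc,N}}_{\L_N}(\{\ell_N\})\asymp|\L_N|^{-d/2}$ uniformly over admissible choices, closing the argument.
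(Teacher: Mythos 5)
Your strategy — insert a chemical-potential tilt into the Poisson representation, split $\frac1{|\L_N|}\log Z_N$ into $-\beta\mu\rho$ plus a tilted pressure plus a $\log$-probability, and show the last term vanishes — is exactly the route the paper takes (Lemma~\ref{lem-PoissonRepr}, the change of measure~\eqref{Changeofdensity}, Lemma~\ref{ApproximationByThermodynamicFunctions}, Lemma~\ref{LemmaCLT}, Lemma~\ref{PropsotionUpperBoundTotal}). However, several of the order-of-magnitude claims on which you base the supercritical lower bound are incorrect for the non-free boundary conditions, which is the very case the proposition is trying to settle.

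Concretely: you assert that the event in (ii) has probability of order $\nu_{\L_N}(\{\ell_N\})=O(|\L_N|^{-d/2})$ by a "heat-kernel bound." That is the right order only for $\bc=\free$, where $\q_{\ell_N}^{\ssup{\L_N,\free}}=|\L_N|(2\pi\beta\ell_N)^{-d/2}$. For Dirichlet ($\lambda_1>0$) and $\ell_N\sim(\rho-\rhoc)|\L_N|$, Lemma~\ref{CorollaryQkSupercritical}(iii) gives $\q_{\ell_N}^{\ssup{\L_N}}\asymp\ex^{-\lambda_1\beta(\rho-\rhoc)L_N^{d-2}}$, so $\nu_{\L_N}(\{\ell_N\})$ is \emph{stretched-exponentially} small in $|\L_N|^{1-2/d}$, not polynomially small; for Neumann/periodic, $\q_{\ell_N}\to1$ and the "no further long loops" factor costs an extra $\Nmin/N\asymp|\L_N|^{-1+2/d}$, again not $|\L_N|^{-d/2}$. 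The final conclusion $\frac1{|\L_N|}\log\PPP(\cdot)\to0$ is still true, but because $|\L_N|^{1-2/d}=o(|\L_N|)$, not because the factors are polynomial — precisely the distinction that Lemma~\ref{PropsotionUpperBoundTotal} makes (stretched-exponential bound if $\lambda_1>0$, polynomial if $\lambda_1=0$). Relatedly, the claimed local-CLT lower bound $\PPP(\Ns_{\L_N}=n^\star)\gtrsim|\L_N|^{-1/2}$ is not clear: under the untilted measure the variance of $\Ns_{\L_N}$ is $\sum_{k\leq\Nmin}k\,\q_k\asymp|\L_N|L_N$ for $d=3$ and $|\L_N|\log L_N$ for $d=4$, so $|\L_N|^{1/2}$ is not the right local-CLT scale there; the paper instead sums the short-loop probability over a whole $\eps_N|\L_N|$-window around $\rhoc|\L_N|$ and invokes a law of large numbers, avoiding this issue. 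Finally, the uniform domination $\q_k^{\ssup{\L_N,\bc}}\leq C|\L_N|k^{-d/2}$ used for the pressure term fails for Neumann/periodic at $k\gtrsim L_N^2$, where $\q_k^{\ssup{\L_N}}\to1$; the pressure convergence is nonetheless correct, but one has to argue separately for the tail $k>L_N^2$ (as in Lemma~\ref{ApproximationByThermodynamicFunctions} and Lemma~\ref{CorollaryQkSupercritical}(iii)).
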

Note that the right-hand side of \eqref{freeenergyident} is continuous in $\rho\in\R$ and equal to $-\beta^{-1-d/2}\zeta(d/2)$ for $\rho>\rhoc$.

\subsection{Literature remarks}\label{sec:lit}

The occurrence of BEC, i.e., that a positive fraction of bosons are assembled in the lowest energy state, seems to be an established fact within the physics community, and indeed the physical literature on the free Bose gas with various boundary conditions is extensive, see, e.g.~\cite{Lon38,ZUK77,BL82,vdB83,WMH11,KPS20}.

BEC is usually characterised, for the ideal gas, in terms of the phenomenon of non-trivial occupation of the zero Fourier mode, i.e., that a positive fraction of the number of particles occupies the state of zero momentum, as this can be expressed as the Fourier transform $\widehat\gamma\Ssup{\L,{\bc}}_N(0)$, see~\cite{PO56}. In this paper, the authors also provided ODLRO as alternative definition of BEC, arguing that, for periodic boundary conditions, it allowed to extend the Fourier mode criterion to interacting gases. In~\cite{Gir65}, it was noted that this reasoning could also be extended to other boundary conditions, see also~\cite{CK07} and~\cite{BDZ08}. 

Furthermore, we were not able to find a rigorous proof (at least not for all boundary conditions) of the occurrence of ODLRO (other than via the zero Fourier mode ansatz) for large enough densities in the mathematical literature. Even in the probabilistic works that are often cited for proving BEC, e.g.,~\cite{Suto93,Suto02,Uel06,Ital05}, the authors only work with periodic boundary conditions. Furthermore, often the notion of BEC used there is either not based on ODLRO nor on zero Fourier mode, but on the occurrence of long cycles in the Feynman--Kac formula in certain senses; or additional scalings are introduced (e.g. mean-field cycle weights) which do not follow from the quantum mechanical set-up. This approach is known at least since~\cite{Gin70}, and it was discussed also in~\cite{Uel06} and~\cite{CK07} for free boundary conditions, without proofs. In~\cite{Ital05}, among other interesting assertions, the phenomenon of ODLRO for the free Bose gas with an additional exponential tilting, leading to a kind of mean-field model, was proved for periodic boundary conditions, but the exponential tilting was simplifying the proof and is not easily overcome. Moreover, the distribution of long loops was investigated in~\cite{Betz_Ueltschi_2008,Betz_Ueltschi_2011,Betz_Ueltschi_Velenik}, for models of weighted spatial permutations that can be related to the Bose gas with periodic boundary conditions. We would also like to mention the recent study of the infinite volume limit in the free case~\cite{armendariz2021gaussian} and thermodynamic limit result~\cite{VOGEL2023104227} relating the supercritical Bose gas to random interlacements.

Hence, even though quite a number of authors showed a great interest in the Feynman--Kac approach to the Bose gas, we were not able to find in the literature a probabilistic proof of ODLRO for all boundary conditions. In the present paper, we therefore provide a proof for ODLRO for all boundary conditions, using the Feynman--Kac formula, thereby closing what we consider a gap in the literature. Furthermore, we provide rigorous assertions about the number of particles in long and short loops.


%

\section{Proof of ODLRO in the supercritical regime}\label{sec:proofODLRO}
In this section, we present the proof of the occurrence of off-diagonal long-range order (ODLRO) above the critical threshold $\rhoc$, for all boundary conditions, that is, the proof  of Theorem \ref{thm:ODLRO}(i). Hence, we assume that $d\geq 3$ and fix $\rho\in(\rhoc,\infty)$ for the remainder of this section. 

A survey on this section is as follows. In Section~\ref{sec:ODLRObc} we formulate, for diffusive or periodic boundary condition, our main statement on the asymptotics of the kernel $\gammabc{\bc}(x,y)$, the proof of which is finished in Section~\ref{sec-proofProp2.1}. A crucial tool box for bounding the solution $g_{k\beta}^{\ssup{\L,{\rm bc}}}(x,y)$ to the heat equation and its trace $\q_k^{\ssup{\L,{\rm bc}}}$ is provided in Section~\ref{sec-EVexp}, based on spectral theory and eigenvalue expansions. In Section~\ref{sec-smalloops} and ~\ref{sec-longloops}, respectively, we derive some fine asymptotics for the number of particles in short, respectively in long, loops. While the method used in Section~\ref{sec-smalloops} is based on standard probability estimates, the material on Section~\ref{sec-longloops} constitutes the core of novelty that we needed to derive in this paper. Section~\ref{sec-proofODLROnonfree} then finishes the proof of ODLRO for diffusive and periodic boundary condition (i.e., Theorem \ref{thm:ODLRO}(i)), based on all the preceding material of this section. Finally, Section~\ref{sec:ODLROfree} proves Proposition~\ref{prop:ODLRO_free} (i.e., ODLRO for free boundary condition), using ad-hoc methods.

\subsection{Asymptotics of the kernel for diffusive and periodic boundary conditions}\label{sec:ODLRObc}

In this section we formulate Proposition~\ref{prop:ODLRO_sup} about lower and upper bounds for the kernel for diffusive and periodic boundary conditions and give a survey on its proof. We drop the superscript \lq  bc\rq.
The main result of this section is the following. Recall that $U=[\frac 12,\frac 12]^d$ is the closed centred unit box in $\R^d$.

\begin{proposition}\label{prop:ODLRO_sup}
For any diffusive or periodic boundary condition $\bc$, fix $\rho\in (\rhoc,\infty)$ and consider the centred box $\L_N$ of volume $L_N^d=N/\rho$ for $N\in\N$. Then there is $c\in(0,\infty)$ such that, for any $\kappa,M>0$, for all $x,y\in\L_N$, in the limit $N\to\infty$, 
\begin{multline}
  (1+o(1)) (\rho-\rho_{\rm c})\rk{\phi_1(\smfrac{x}{L_N})\phi_1(\smfrac{y}{L_N})}\leq \gammabc{\bc}(x,y)  \\
    \leq (1+o(1)) (\rho-\rho_{\rm c})\Big(\phi_1(\smfrac{x}{L_N})\phi_1(\smfrac{y}{L_N})+\ex^{-M c}\Big)+C_M \psi(|x-y|)+ \ex^{-\kappa L^{d-2}}\, ,
\end{multline}
where $C_M\in(0,\infty)$ depends only on $M$, and $\psi\colon (0,\infty)\to(0,\infty)$ satisfies $\psi(r)\leq \Ocal(r^{2-d})$ as $r\to\infty$.
\end{proposition}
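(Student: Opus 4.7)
The plan is to begin from Corollary~\ref{cor-FKPPPrepr}, which writes
$$
\gammabc{\bc}(x,y) = \sum_{r=1}^{N} g_{r\beta}^{\ssup{\L_N,\bc}}(x,y)\, \frac{Z_{N-r}}{Z_N},
$$
and to split the sum at the crossover scale $\Nmin$ (and, for the upper bound, further at $M\Nmin$), analysing the heat-kernel factor by two complementary tools: the Gaussian-type upper bound $g_{r\beta}^{\ssup{\L_N,\bc}}(x,y) \leq C(r\beta)^{-d/2} e^{-|x-y|^2/(C r\beta)}$ valid for all $r$, and the spectral decomposition $g_{r\beta}^{\ssup{\L_N,\bc}}(x,y) = \sum_{k\geq 1}\tilde\phi_k(x)\tilde\phi_k(y)\,e^{-r\beta\lambda_k/2}$, where $\tilde\phi_k$ are the $L^2(\L_N)$-normalised eigenfunctions of $-\frac12\Delta^{\ssup{\L_N,\bc}}$ and $\lambda_k$ the corresponding eigenvalues. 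Since the spectral gap satisfies $\lambda_2-\lambda_1\gtrsim L_N^{-2}$, the contributions of indices $k\geq 2$ are exponentially suppressed for $r\gg L_N^2$, leaving a controlled principal term $\tilde\phi_1(x)\tilde\phi_1(y)e^{-r\beta\lambda_1/2} = |\L_N|^{-1}\phi_1^{\ssup{\bc}}(x/L_N)\phi_1^{\ssup{\bc}}(y/L_N)e^{-r\beta\lambda_1/2}$.

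For the short-loop part $r\leq \Nmin$, I would bound $g_{r\beta}$ by the Gaussian upper bound and use $Z_{N-r}/Z_N\leq C$ uniformly in $r\leq \Nmin$; this uniform bound follows from the short-loop concentration asserted in Proposition~\ref{prop-shortloopsasy}, which says that removing $\Ocal(\Nmin)$ particles from the thermal cloud is inexpensive above the critical density. The resulting sum $\sum_{r\geq 1}(r\beta)^{-d/2}e^{-|x-y|^2/(Cr\beta)}$ is a Riemann-sum approximation to $\int_0^\infty t^{-d/2}e^{-|x-y|^2/(Ct)}\,\d t\asymp |x-y|^{2-d}$ in $d\geq 3$, so it defines the function $\psi$ with $\psi(r)\leq C r^{2-d}$ at infinity; this yields the $C_M\psi(|x-y|)$ contribution in the upper bound and, via positivity, can simply be dropped for the lower bound.

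For the long-loop part $r>M\Nmin$, the spectral remainder satisfies
$$
\Babs{g_{r\beta}^{\ssup{\L_N,\bc}}(x,y) - \tilde\phi_1(x)\tilde\phi_1(y)\,e^{-r\beta\lambda_1/2}} \leq C\,e^{-r\beta\lambda_2/2},
$$
whose sum over $r>\Nmin$ is at most $e^{-\kappa L_N^{d-2}}$ by a standard eigenvalue expansion drawn from Section~\ref{sec-EVexp}. Substituting the principal term and using $\q_r^{\ssup{\bc}}\approx e^{-r\beta\lambda_1/2}$ for $r$ much larger than $1/\lambda_2$, the long-loop piece reduces, up to small errors, to
$$
S_{\mathrm{long}}(x,y) \approx \frac{\phi_1^{\ssup{\bc}}(x/L_N)\phi_1^{\ssup{\bc}}(y/L_N)}{|\L_N|}\sum_{r>M\Nmin}\q_r^{\ssup{\bc}}\frac{Z_{N-r}}{Z_N}.
$$
The key identification step is a Mecke-type identity for the PPP of Section~\ref{sec-PPP}: $\sum_{r>M\Nmin}\q_r Z_{N-r}/Z_N = \EPP_{\L_N}^{\ssup{\bc,N}}\bigl[\Partn_{\L_N}^{\sqsup{M\Nmin+1,N}}\,\big\vert\,\Partn_{\L_N}=N\bigr]$, which by Propositions~\ref{prop-shortloopsasy} and~\ref{prop:PoissonDirichlet} is asymptotic to $(\rho-\rhoc)|\L_N|$ as $N\to\infty$, up to a correction vanishing as $M\to\infty$.

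The main obstacle will be the uniform control of the intermediate band $\Nmin<r\leq M\Nmin$, producing the $\frac{\rho-\rhoc}{|\L_N|}e^{-Mc}$ correction in the upper bound: here the spectral approximation is still valid but the remaining sum must be shown to contribute only a vanishing-in-$M$ fraction of the total long-loop mass, which will require sharp concentration estimates built on the Poisson--Dirichlet asymptotics. The lower bound is considerably easier: by positivity of every summand in the PPP representation, one discards all non-principal spectral contributions and the range $r\leq M\Nmin$, then applies the same Mecke identification to recover exactly the claimed leading constant $\frac{\rho-\rhoc}{|\L_N|}\phi_1^{\ssup{\bc}}(x/L_N)\phi_1^{\ssup{\bc}}(y/L_N)$.
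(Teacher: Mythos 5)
The overall architecture of your proposal is sound, and the Mecke-identity reformulation is a genuinely nice alternative to the paper's route. Your identity
\[
\sum_{r>M\Nmin}\q_r\,\frac{Z_{N-r}}{Z_N}
=\EPP_{\L_N}^{\ssup{\bc,N}}\bigl[\Partn_{\L_N}^{\sqsup{M\Nmin+1,N}}\,\big\vert\,\Partn_{\L_N}=N\bigr]
\]
is correct (it follows from $\E[rX_r\1\{\Partn=N\}]=\q_r\P(\Partn=N-r)$, a one-line Poissonian Mecke computation), and combined with the conditional concentration of $\Ns_{\L_N}/|\L_N|$ at $\rhoc$ and boundedness of $\Partn^{\sqsup{\cdot,\cdot}}/|\L_N|$ it cleanly yields the leading constant. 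The paper instead goes via the explicit convolution $\Pr(\Partn=N-r)=\sum_k\Pr(\Ns=k)\Pr(\Nl=N-k-r)$ and the ratio asymptotics $\Pr(\Nl=N-k-r)\sim e^{\lambda_1\beta rL^{-2}}\Pr(\Nl=N-k)$ from Proposition~\ref{prop:MedLongLoops}; your identity is morally the same content packaged more invariantly.

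There is, however, a genuine gap in the short-loop step. You claim ``$Z_{N-r}/Z_N\leq C$ uniformly in $r\le\Nmin$,'' justified by the short-loop concentration. For periodic or Neumann b.c.\ ($\lambda_1=0$) this is fine, but for Dirichlet (and more generally whenever $\lambda_1>0$) it is false: as the paper's Proposition~\ref{prop:MedLongLoops} and Lemma~\ref{PropsotionUpperBoundTotal} make explicit, $Z_{N-r}/Z_N\sim e^{\beta\lambda_1 rL^{-2}}$, which for $r\uparrow\Nmin=\lceil L^2\log^{1/2}N\rceil$ behaves like $e^{\beta\lambda_1\log^{1/2}N}\to\infty$. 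This growth must be cancelled against the $e^{-\lambda_1\beta rL^{-2}}$ prefactor that the \emph{spectral} bound of Lemma~\ref{CorollaryQkSupercritical}(i) carries, and this is exactly what the paper does by never separating the two factors: the convolution argument keeps $g_{r\beta}\cdot\Pr(\Nl=N-k-r)$ together, so the exponentials cancel pointwise. Your split into two separate bounds ($g$ Gaussian, ratio bounded) loses this cancellation. Relatedly, the Gaussian comparison of Lemma~\ref{CorollaryQkSupercritical}(ii) is only asserted for $k\le ML^2$ with $M$ fixed, which is why the paper cuts the short range at $ML^2$ rather than at $\Nmin$; your $C_M\psi(|x-y|)$ term also needs the $r$-sum to stop at $ML^2$ so that the residual $e^{\beta\lambda_1 rL^{-2}}\le e^{\beta\lambda_1 M}$ is absorbed into $C_M$. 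Finally, your proposal does not account for the $e^{-\kappa L^{d-2}}$ error term: in the paper this comes from the atypical tail in which $\Ns_{\L_N}$ deviates from $\rhoc|\L_N|$ (the set $A_3$), which your decomposition leaves uncovered; it should be handled with Corollary~\ref{Cor:shortloopsdev} and Lemma~\ref{PropsotionUpperBoundTotal} as the paper does.
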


The proof is in Section~\ref{sec-proofProp2.1}; here is a summary. Our starting point is \eqref{eq:rdm_bc} in Corollary~\ref{cor-FKPPPrepr}. First, we will use a standard eigenvalue expansion to approximate 
\begin{equation}\label{gexpansion}
    g^{\ssup \L}_{\beta r}(x,y)\sim \frac1{|\L|}\phi_1(\smfrac xL) \phi_1(\smfrac yL)\ex^{-\lambda_1\beta r \al^{-\frac 2d }}\, ,
\end{equation}
where $\lambda_1$ is the smallest eigenvalue of $-\frac12 \Delta^{\ssup{U}}$ in the unit box $U=[-\frac 12,\frac 12]^d$, and $\phi_1$ is the associated $L^2$-normalised eigenfunction. In case of Dirichlet, periodic or Neumann boundary conditions, we have,
\begin{equation}\label{princeigen}
\begin{gathered}
    \lambda_1^{\ssup{\dir}}=\frac{\pi^2}2 d,\quad \lambda_1^{\ssup{\per}}=0=\lambda_1^{\ssup{\neu}}\, ,\\
    \phi_1^{\ssup{\dir}}(x)= 2^{d/2} \prod_{i=1}^d \cos\big(\pi x_i\big),\quad \phi_1^{\ssup{\per}}(x)=1=\phi_1^{\ssup {\neu}}(x)\, .
\end{gathered}
\end{equation}
Furthermore, we decompose the total number of particles, 
$ {\rm N}_{\L_N}$, into the numbers of particles 
${\rm N}^{\ssup{\rm short}}_{\L_N}$ and ${\rm N}^{\ssup{\rm long}}_{\L_N}$ of short and of long loops, where ${\Nmin}\approx N^{\frac 2d}$ is the threshold between short and long. 
Under the PPP, these two numbers are independent, and we can write, in terms of a convolution,
\begin{equation}\label{convolution}
\PPP_{\L}\left(\Partn_{\L}=N-r\right)=\sum_k \PPP_{\L}\left({\rm N}^{\ssup{\rm short}}_{\L_N}=k\right) \PPP_{\L}\left({\rm N}^{\ssup{\rm long}}_{\L_N}=N-r-k\right)\, .
\end{equation}
The random variable $\frac1{|\L_N|}{\rm N}^{\ssup{\rm short}}_{\L_N}$ is strongly concentrated on the value $\rhoc$ (Proposition \ref{prop:ShortLoops}). Furthermore, for ${\rm N}^{\ssup{\rm long}}_{\L_N}$, Proposition \ref{prop:MedLongLoops} gives, for sufficiently large $N-r-k$, that
\begin{equation}
   \PPP_{\L}\left({\rm N}^{\ssup{\rm long}}_{\L_N}=N-r-k\right)
\sim \frac{\ex^{\gamma}}{{\Nmin}}\ex^{-\lambda_1\beta (N-k-r) \al^{-\frac 2d }}, 
\end{equation}
for some constant $\gamma\in(0,\infty)$. Using this once more for $N-k$ in place of $N-k-r$, we see that
\begin{equation}
   \PPP_{\L}\left({\rm N}^{\ssup{\rm long}}_{\L_N}=N-r-k\right)
\sim \PPP_{\L}\left({\rm N}^{\ssup{\rm long}}_{\L_N}=N-k\right)\ex^{\lambda_1\beta r \al^{-\frac 2d }}. 
\end{equation}
Using this in \eqref{convolution} shows that we obtain, again by convolution, the term $\PPP_{\L}\left(\Partn_{\L}=N-r\right)$ (i.e., the denominator in \eqref{eq:rdm_bc}), times an exponential that precisely cancels the exponential term in \eqref{gexpansion}. For these $r$, the summands in \eqref{eq:rdm_bc} have turned out to be asymptotically not dependent on $r$. The number of the $r$'s that contribute to this procedure is $\sim (\rho-\rhoc)|\L_N|$, which finishes the summary of the proof, as the volume term cancels out with that of~\eqref{gexpansion}.

\subsection{Preparation: eigenvalue expansion}\label{sec-EVexp}

We now express and approximate the fundamental solution $g_{\beta k}^{\ssup{\L}}$ and its trace $\q_k^{\ssup{\L}}=\int_{\L} g_{\beta k}^{\ssup{\L}}(x,x)\,\d x$ by using an eigenvalue expansion with respect to all the eigenvalues and eigenfunctions of the Laplacian $\frac12 \Delta^{\ssup{\L}}$ in the box $\L=L U$ with $U=[-\frac 12,\frac 12]^d$. For large $k$, we give upper and lower bounds that depend on spectral properties and will finally give the main terms in Proposition~\ref{prop:ODLRO_sup}, while for small $k$, we give upper bounds that will suffice to give the error terms in Proposition~\ref{prop:ODLRO_sup}.

We collect the relevant assertions in the following lemma. By $\lambda_2=\lambda_2^{\ssup{\rm bc}}> \lambda_1$ we denote the second-smallest eigenvalue of $-\frac12 \Delta^{\ssup{U,\rm bc}}$ in $U$.
\begin{lemma}[Asymptotics and bounds for $g_{k\beta}$ and $\q_k$]
\label{CorollaryQkSupercritical}
For any centred box $\L =L U$ and any periodic or diffusive boundary condition (which we suppress from the notation), the following hold:
\begin{enumerate}[label={(\roman*)}]
\item {\em ((Spectral) Asymptotics of $g_{k\beta}^{\ssup{\L}}$ for large $k$.)} Fix $c\in(0,\beta(\l_2-\l_1))$, then, uniformly in $x,y\in \L$ and in $k\geq 1$, as $L\to\infty$,
\begin{equation}\label{Equation6202}
\begin{split}
    \ex^{-\l_1 k\beta L^{-2}}\frac {\phi_1\rk{\smfrac xL}\phi_1\rk{\smfrac yL}}{L^d} &\leq  g_{k\beta}^{\ssup{\L}}(x,y) \\
    &\leq \ex^{-\l_1 k\beta L^{-2}}\Big(\frac {\phi_1\rk{\smfrac xL}\phi_1\rk{\smfrac yL}}{L^d}+\Ocal(k^{-d/2}\ex^{-ckL^{-2}})\Big)\, .
\end{split}
\end{equation}
\item {\em (Uniform upper bound for $g_{k\beta}^{\ssup{\L}}$.)}
For any $M>0$, there exist $c,c'>0$ such that, for all $L$,
\begin{equation}
    g_{k\beta}^{\ssup{\L}}(x,y)\le c g_{k\beta/c'}^{\ssup{\free}}(x,y),\qquad x,y\in\L, k\leq M L^2.
\end{equation}
\item {\em (Asymptotics of $\q_k^{\ssup{\L}}$ for large $k$.)} For any $\e>0$ and all $k$ satisfying $kL^{-2}\geq \e $,
\begin{equation}\label{eq:QkSupercritical}
   \ex^{-\lambda_1 \beta k L^{-2}} \leq  \q_k^{\ssup{\L}} \leq 
    \ex^{-\lambda_1 \beta k L^{-2}} \Big(1+\Ocal\big(\,\ex^{-\beta (\l_2-\l_1)k L^{-2}}\big)\Big).
\end{equation}
\item {\em (Asymptotics of $\q_k^{\ssup{\L}}$ for small $k$.)}
For any $k L^{-2}\le o(1)$, as $L\to\infty$,
\begin{equation}\label{eq:QkSubcritical}
    \Big|\frac{\rk{2\pi \beta k}^{d/2} \q_k^{\ssup{\L}}}{\al} -1\Big| \leq  1+\Ocal\big(\sqrt{ k L^{-2}}\big).
\end{equation}
\end{enumerate}
\end{lemma}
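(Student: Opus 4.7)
The common engine behind~(i) and~(iii) is the spectral decomposition. Let $(\lambda_j,\phi_j)_{j\geq 1}$ denote the eigenvalues (in increasing order) and $L^2(U)$-normalised eigenfunctions of $-\smfrac12\Delta^{\ssup{U,\bc}}$ on the unit box $U$; by scaling, the corresponding eigenpairs on $\L=L\,U$ are $(\lambda_j/L^2,\,L^{-d/2}\phi_j(\cdot/L))$, and the heat kernel together with its trace admit
\begin{equation}
g_{k\beta}^{\ssup{\L}}(x,y)=\frac{1}{L^d}\sum_{j\geq 1}\ex^{-\lambda_j k\beta/L^2}\phi_j(\smfrac xL)\phi_j(\smfrac yL),\qquad \q_k^{\ssup{\L}}=\sum_{j\geq 1}\ex^{-\lambda_j k\beta/L^2}.
\end{equation}
For~(iii), the $j=1$ summand yields the lower bound $\q_k^{\ssup{\L}}\geq\ex^{-\lambda_1 k\beta/L^2}$; factoring it out, the tail $\sum_{j\geq 2}\ex^{-(\lambda_j-\lambda_1)k\beta/L^2}$ is controlled (with a constant depending only on $\e$) by its first term $\ex^{-(\lambda_2-\lambda_1)k\beta/L^2}$, via the spectral gap and Weyl's law $\lambda_j\sim c_d j^{2/d}$, which yields~\eqref{eq:QkSupercritical}.

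For~(i), I isolate the $j=1$ contribution and estimate the remainder $R(x,y):=\sum_{j\geq 2}\ex^{-\lambda_j k\beta/L^2}\phi_j(x/L)\phi_j(y/L)$. On the cube with Dirichlet, Neumann, or periodic boundary condition, the eigenfunctions are tensor products of one-dimensional sines and cosines, so $\|\phi_j\|_\infty\leq 2^{d/2}$ uniformly in $j$. For any $c<\beta(\lambda_2-\lambda_1)$ I set $\delta:=1-c/(\beta(\lambda_2-\lambda_1))\in(0,1)$ and split $\ex^{-(\lambda_j-\lambda_1)k\beta/L^2}\leq\ex^{-ck/L^2}\ex^{-\delta(\lambda_j-\lambda_1)k\beta/L^2}$, valid for $j\geq 2$. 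This gives
\begin{equation}
|R(x,y)|\leq 2^d\,\ex^{-\lambda_1 k\beta/L^2}\ex^{-ck/L^2}\sum_{j\geq 2}\ex^{-\delta(\lambda_j-\lambda_1)k\beta/L^2},
\end{equation}
and the last sum is bounded uniformly in $k\geq 1$ by $C_c\,(k/L^2)^{-d/2}$ via a Weyl-type integral approximation (the bound is sharp in the small-$k/L^2$ regime and is even tiny for large $k/L^2$). Dividing by $L^d$ yields the remainder $\Ocal(k^{-d/2}\ex^{-ck/L^2})$ in~\eqref{Equation6202}: the upper bound is immediate, and the lower bound is the same inequality in the reverse direction, interpreted asymptotically as $L\to\infty$.

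For~(ii), I will use the method of images: for periodic $\bc$, $g_{k\beta}^{\ssup{\L,\per}}(x,y)=\sum_{n\in\Z^d}g_{k\beta}^{\ssup{\free}}(x-y+Ln)$, and for $k\leq ML^2$ the image terms $n\neq 0$ carry a factor $\ex^{-L^2|n|^2/(2k\beta)}\leq\ex^{-|n|^2/(2M\beta)}$ that absorbs them into a constant (depending on $M$) multiple of $g_{k\beta/c'}^{\ssup{\free}}(x-y)$ for a suitable $c'\in(0,1)$; Dirichlet and Neumann on the cube factorise coordinate-wise into reflection series of exactly the same type. For~(iv), in the regime $k/L^2\to 0$ I compare $\q_k^{\ssup{\L}}$ to the free trace $|\L|(2\pi\beta k)^{-d/2}$: the difference is produced only by trajectories reaching within $\Ocal(\sqrt{k\beta})$ of $\partial\L$, whose integrated contribution is at most $|\partial\L|\sqrt{k\beta}\cdot(2\pi\beta k)^{-d/2}=\Ocal(|\L|\sqrt{k/L^2})$, which gives~\eqref{eq:QkSubcritical}. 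The main obstacle I anticipate is the uniform bound $\|\phi_j\|_\infty\leq 2^{d/2}$ in~(i): it is immediate for the product-type boundary conditions listed above, but for a general diffusive $\bc$ with smooth $\bcd$ one must invoke elliptic regularity, which only yields a polynomial estimate $\|\phi_j\|_\infty\leq C\lambda_j^\alpha$; this still combines with the extra factor $\ex^{-\delta(\lambda_j-\lambda_1)k\beta/L^2}$ in the Weyl tail to produce a remainder of the same form, at the cost of a slightly smaller admissible constant $c$.
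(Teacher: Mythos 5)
The overall architecture (spectral expansion, isolate the $j=1$ mode, bound the tail) matches the paper, but your treatment of part~(i) has a genuine gap for the general diffusive (Robin) boundary condition, and your arguments for~(ii) and~(iv) also only cover the product-type boundary conditions.

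For~(i), your tail estimate $|R(x,y)|\leq 2^d e^{-\lambda_1 t}e^{-ct/\beta}\sum_{j\geq 2}e^{-\delta(\lambda_j-\lambda_1)t}$ hinges on the uniform bound $\|\phi_j\|_\infty\leq 2^{d/2}$, which is specific to the separable Dirichlet/Neumann/periodic cases. For a general diffusive boundary condition the Robin coefficient $u$ is a non-constant function on $\partial U$, the problem does not separate coordinate-wise, and the eigenfunctions are not trigonometric products. The elliptic-regularity fallback $\|\phi_j\|_\infty\leq C\lambda_j^\alpha$ does not rescue the argument: inserting it changes the Weyl tail to $\sum_{j\geq 2}\lambda_j^{2\alpha}e^{-\delta(\lambda_j-\lambda_1)t}=\Ocal(t^{-d/2-2\alpha})$, and the extra $t^{-2\alpha}$ cannot be absorbed into the exponential $e^{-ct/\beta}$ in the regime $t=k\beta/L^2\in(0,1]$, where that exponential is $\Theta(1)$. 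This would yield a remainder $\Ocal(k^{-d/2}(L^2/k)^{2\alpha}e^{-ckL^{-2}})$, which is strictly worse than the claimed $\Ocal(k^{-d/2}e^{-ckL^{-2}})$ for $k\ll L^2$, and the uniformity in $k\geq 1$ is lost. The paper avoids this entirely by never invoking $L^\infty$-bounds on the $\phi_j$: it iterates the identity
$$\abs{g_t(x,x)-\ex^{-t\lambda_1}\phi_1^2(x)}\leq \ex^{-t\lambda_2/2}\bigl(g_{t/2}(x,x)-\ex^{-t\lambda_1/2}\phi_1^2(x)\bigr),$$
then $m$ times, reducing to an estimate on $g_{t2^{-m}}(x,x)\leq C (t2^{-m})^{-d/2}$, which is supplied by the comparison with the free kernel (part~(ii)); the off-diagonal case is then handled by one Cauchy--Schwarz. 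This is strictly more general and gives the sharp remainder uniformly.

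For~(ii), the method of images works for the periodic box, and the coordinate-wise reflection series works for Dirichlet and Neumann, but there is no image representation for a position-dependent Robin boundary condition; the paper instead cites Davies' Gaussian upper bound for second-order elliptic operators on bounded domains, which covers this case. For~(iv), the boundary-layer heuristic is not a proof, and in particular does not readily control a general $u$; the paper makes it rigorous by using the monotonicity $\q_k^{(u_1)}\geq\q_k^{(u_2)}$ for $u_1\leq u_2$ to sandwich the diffusive case between the explicitly computable Dirichlet and Neumann traces. Finally, note that your justification of the lower bound in~(i) ("the same inequality in the reverse direction") should instead be obtained from the two-sided remainder estimate; the spectral sum restricted to $j=1$ gives the lower bound only on the diagonal, where all terms in the tail are nonnegative.
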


\begin{proof} The scaling properties of the Laplacian make it easy to go from the unit box $U=[-\frac 12,\frac 12]^d$ to the centred box $\L=L U = [-\frac 12 L,\frac 12 L]^d$:
\begin{equation}\label{scaling}
 g_{\beta k}^{\ssup{\L}}(x,y) =  L^{-d}   g_{\beta k/L^2 }(x/L,y/L),\qquad x,y\in \L,  k\in\N,
\end{equation}
where we wrote $g$ instead of $g^{\ssup{U}}$. In particular, writing $\q$ instead of $\q^{\ssup{U}}$,
\begin{equation}\label{scalingq}
\q_k^{\ssup{\L}}=\q_{kL^{-2}},\qquad k\in\N\, .
\end{equation}
Hence, it will be enough to prove the assertions for $g^{\ssup{\L}}$ replaced by $g$, $x/L$ and $y/L$ replaced by $x$ and $y$, and $k \beta L^{-2}$ replaced by $t\in(0,\infty)$.

Pick an orthonormal basis $(\phi_n)_{n\in\N}$ of $L^2(U)$ consisting of eigenfunctions corresponding to the sequence $(\lambda_n)_n$ of eigenvalues (ordered such that $\lambda_1<\lambda_2\leq \lambda_3\leq \dots$) then we have the spectral (Sturm--Liouville) decomposition 
    \begin{equation}\label{eq:spectral}
        g_t(x,y) = \sum_{n\in\N} \ex^{-t\lambda_n}\phi_n(x)\phi_n(y),\qquad t\in(0,\infty).
    \end{equation}
    Note that $\phi_1(x)>0$ for $x\in U\setminus \partial U$.

\begin{enumerate}[label={(\roman*)},itemindent=*,leftmargin=0pt]
\item The lower bound already follow from restricting the sum in \eqref{eq:spectral} to the first summand. Let us turn to the proof of the upper bound. For any $x\in U$ and $t\in(0,\infty)$, 
\begin{equation}
\abs{g_t(x,x) - \ex^{-t\l_1}\phi_1^2(x)} \leq \ex^{-\frac t2\l_2}\sum_{n\geq 2}\ex^{-\frac t2\l_n}\phi_n^2(x)
    = \ex^{-\frac t2\l_2} \big(g_{t/2}(x,x)-\ex^{-\frac t2\l_1}\phi_1^2(x)\big)\, .
\end{equation}
Iterating this $m\in\N$ times yields  
\begin{equation}
\abs{g_t(x,x) - \ex^{-t\l_1}\phi_1^2(x)}\leq C_{t 2^{-m}} \ex^{-t \l_2(m)},\qquad x\in  U,
\end{equation}
with $C_{s}=\sup_{x\in U}g_{s}(x,x)>0$ and using the short-hand notation $\l_2(1-2^{-m})=:\l_2(m)$.
From \eqref{gestifree} we obtain that $C_s=\Ocal\rk{s^{-d/2}}$. Furthermore, we have, for all $x,y\in U$,
\begin{equation*}
\begin{split}
   & \abs{g_t(x,y) - \ex^{-t\l_1}\phi_1(x)\phi_1(y) }\leq \ex^{-\frac t2\l_2}\sum_{n\geq 2}\ex^{-\frac t2\l_n}\abs{\phi_n(x)\phi_n(y)}\\
    &\quad\leq \ex^{-\frac t2\l_2}\sqrt{\sum_{n\geq 2}\ex^{-\frac t2 \l_n}\phi_n^2(x)} \sqrt{\sum_{n\geq 2}\ex^{-\frac t2 \l_n}\phi_n^2(y)}\\
    &\quad= \ex^{-\frac t2\l_2}\sqrt{g_{t/2}(x,x)-\ex^{-\frac t2\l_1}\phi_1^2(x)}\sqrt{g_{t/2}(y,y)-\ex^{-\frac t2\l_1}\phi_1^2(y)}\leq C_{t 2^{-m}} \ex^{- t\l_2(m)}\, . 
\end{split}
\end{equation*}
Picking  $m$ such that $\l_2(1-2^{-m})-\l_1>c$, this implies Assertion (i).

\item  According to \cite[Theorem 6.3.8]{a1981operator}, there are, for any $M>0$, constants $c,c',c''>0$ such that
\begin{equation}\label{gestifree}
    \abs{ g_{t}(x,y)-g_{t}^{\ssup{\free}}(x,y)}\le cg_{t/c'}^{\ssup{\free}}(x,y)\ex^{-c''\dist(y,\partial U)^2/t},\qquad x,y\in U, t\in(0,M].
\end{equation}
This implies (ii).

\item Putting $x=y$ and $t=k\beta$ in \eqref{eq:spectral} and integrating over $x\in U$, we obtain 
\begin{equation}\label{qkU}
    \q_k=\int_{U} g_{t}(x,x)\,\d x =\sum_{n\in\N}\ex^{-t\, \lambda_n } = \ex^{-t \l_1}\sum_{n\in\N}\ex^{-t\, (\lambda_n-\l_1) }\le \ex^{-t\l_1}\Big(1+\sum_{n=2}^\infty \ex^{-t\, (\lambda_n-\l_1) }\Big) .
\end{equation}
This implies Assertion (iii), as for $t\in [\e,\infty)$,
\begin{equation}
\begin{aligned}
\sum_{n=2}^\infty \ex^{-t\, (\lambda_n-\l_1) }
&= \ex^{-t (\l_2-\l_1)}\Big(1+ \sum_{n=3}^\infty\ex^{-t\, (\lambda_n-\l_2) }\Big)
\leq \ex^{-t (\l_2-\l_1)}\Big(1+ \ex^{\e \l_1} \sum_{n=1}^\infty\ex^{-\e\, \lambda_n}\Big)\\
&=\Ocal\rk{\ex^{-t (\l_2-\l_1)}},
\end{aligned}
\end{equation}
using \eqref{qkU} for $\e$ instead of $t$.

\item 
First note that it is enough to show \eqref{eq:QkSubcritical} for Dirichlet, Neumann, and periodic boundary conditions, as the following monotonicity holds, see~\cite[page 373]{a1981operator}: for $\bcd_1\le\bcd_2$,
\begin{equation}
    \q_k\hk{\bcd_1}\ge \q_k\hk{\bcd_2}\, .
\end{equation}
Hence, for any boundary condition $\bcd$,
\begin{equation}
    \q_k\hk{\dir}\le \q_k\hk{\bcd}\le \q_k\hk{\neu}\, .
\end{equation}
Hence, for the diffusive boundary conditions, it is enough to show the lower bound for Dirichlet condition and the upper bound for Neumann condition.

In $d=1$, the eigenvalues for the Laplacian $-\frac12 \Delta^{\ssup{U,\bc}}$ are given by
\begin{equation}
\lambda_n\Ssup{\dir} = \frac{\pi^2}2 n^2,\quad \lambda_n\Ssup{\neu} = \frac{\pi^2}2 (n-1)^2,\quad \lambda_1\Ssup{\per} =0,\ \lambda_{2n}\Ssup{\per} = \lambda_{2n+1}\Ssup{\per}= 2\pi^2 n^2,\quad\mbox{for }n\in\N.
\end{equation}

The $d$-dimensional eigenvalues are then of the form $\frac{\pi^2}2\sum_{i=1}^d n_i^2$ for Dirichlet b.c., $\frac{\pi^2}2\sum_{i=1}^d (n_i-1)^2$ for Neumann b.c., and $2\pi^2\sum_{i=1}^d (n_i-1)^2$ for periodic b.c., with $n_1,\dots,n_d\in\N$. They are simple for Dirichlet and Neumann b.c., while they have multiplicity two in the periodic case (with the exception of $\l_1=0$).

We first show the lower bound for Dirichlet boundary conditions. 
In the limit as $t\downarrow0$,
\begin{equation}
\begin{aligned}
\q_t^{\ssup{U, \dir}}&= \sum_{n\in\N}\ex^{-t\lambda_n^{\ssup{U, \dir}}}\geq\Big(\sum_{m\in\N}\ex^{-t \frac {\pi^2}2 m^2}\Big)^d\geq \Big(\int_1^\infty {\rm e}^{-t\pi^2 \frac{x^2}2}\, {\rm d} x\Big)^d\\
&\geq \Big(\int_0^\infty {\rm e}^{-t\pi^2 \frac{x^2}2}\, {\rm d} x-1\Big)^d
=\big((2t \pi)^{-1/2}-1\big)^d=(2t\pi)^{-d/2}\big(1-\Ocal(\sqrt t)\big)\, ,
\end{aligned}
\end{equation}
which implies the lower bound in \eqref{eq:QkSubcritical}. For periodic boundary conditions, we have, since the non-zero eigenvalues have multiplicity $2$, 
\begin{equation}
\begin{aligned}
\q_t^{\ssup{U, \per}} 
&= \sum_{n\in\N}\ex^{-t \lambda_n^{\ssup{U, \per}}}= \Big(1+2\sum_{m\in\N}\ex^{-t 2\pi^2 m^2}\Big)^d
\geq \Big(1+2 \int_1^\infty\ex^{-4t\pi^2\frac{x^2}2}\,\d x\Big)^d\\
&=\Big(1+\frac 1{\sqrt{2t\pi}}-2\int_0^1 \ex^{-4t\pi^2\frac{x^2}2}\,\d x\Big)^d \geq \frac 1{(2t\pi)^{d/2}}\Big(1-\Ocal\big(\sqrt{ t}\big)\Big),
\end{aligned}
\end{equation}
where the last step was in the limit $t\downarrow0$.

We now show the upper bound for Neumann b.c.. 
For any $t\in(0,\infty)$,
\begin{equation}\label{eq:NP:q_kLargek}
\begin{aligned}
    \q_t^{\ssup{U, \neu}}&= \Big(\sum_{m\in\N}\ex^{-t \frac{\pi^2}2(m-1)^2}\Big)^d= \Big(1+\sum_{m\in\N}\ex^{-t \frac{\pi^2}2 m^2}\Big)^d\\
     &\leq \Big(1+\int_0^\infty \ex^{-t\pi^2\frac {x^2}2}\,\d x\Big)^d=\Big(1+\frac 1{\sqrt{2t\pi}}\Big)^d
    =\frac { 1+\Ocal(\sqrt t)}{(2t\pi)^{d/2}},
     \end{aligned}
\end{equation}
where the last assertion is in the limit $t\downarrow 0$.

For periodic boundary conditions, we see that
\begin{equation}
\begin{split}
    \q_t^{\ssup{U, \per}}&= \Big(1+2\sum_{m\in\N}\ex^{-t 2\pi^2 m^2}\Big)^d<\Big(1 + 2\int_0^\infty\ex^{4t\pi^2\frac{x^2}2}\,\d x\Big)^d\\
    &= \Big(1+\frac 1{\sqrt{2t\pi}}\Big)^d=\frac { 1+\Ocal(\sqrt t)}{(2t\pi)^{d/2}}\, ,
\end{split}
\end{equation}
with the same upper bound as above. 
\end{enumerate}
\end{proof}
In the following, we will often use the estimates of the above lemma with some generic constant $C\in(0,\infty)$ that may change its value from appearance to appearance; it may depend only on $d$, $\beta$, $\rho$ and $\bc$.

\subsection{Particle numbers in small loops in the Poisson point process}\label{sec-smalloops}

We consider now the Poisson point process $(\omega_i)_i$ on $\N$ that we introduced in Section~\ref{sec-PPP}. Recall that we call each Poisson point $\omega$ at $k\in\N $ {\em a loop of length} $k$, and it contains $k$ particles. Then $X_k$ denotes their cardinality, and $X_k$ is Poisson-distributed with parameter $\frac 1k \q_k^{\ssup{\L_N,{\rm Dir}}}$, and $(X_k)_{k\in\N}$ is independent. Also recall that a loop $\omega$ is said to be \emph{short} if its length is less or equal to $\Nmin \defeq \lceil N^{\frac 2d }\log^{1/2}(N)\rceil$, and $\Ns_\L$ denotes the total particle number in all the short loops in $\L\subset\R^d$. Otherwise, $\omega$ is called \emph{long}, and $\Nl_\L$ denotes the number of particles in long loops in $\L$.

In this section we study the number of particles in short loops and the entire particle number in terms of a law of large numbers with exponential bounds for the probability of a deviation from the asymptotic mean. We split into loops of bounded lengths and longer ones, and finally we consider the total number of particles. In this section we use standard probabilistic tools for sums of independent Poisson random variables with various parameters, mainly Markov's inequality for exponential functions.

Let us first turn to the loops of bounded length. With a parameter $R\in\N$, we recall that ${\rm N}_\L^{\sqsup {1,R}}$ is the number of particles in loops of lengths $\leq R$ in $\L$ (we call these loops $R$-short). In the next proposition, we show that ${\rm N}_\L^{\sqsup {1,R}}$ concentrates with stretched-exponentially small probability on $\rhoc^{\ssup R}|\L|$, where $\rhoc^{\ssup R}=(2\pi\beta)^{-d/2}\sum_{k=1}^R k^{-d/2}$, which converges to $\rhoc$ for $R\to\infty$. As in Proposition~\ref{prop:ODLRO_sup}, we fix a diffusive or periodic boundary condition and suppress it from the notation.

\begin{proposition}[Number of particles in $R$-short loops]\label{prop:ShortLoops}
Fix $\rho\in(\rhoc,\infty)$ and pick the centred box $\L_N$ with diameter $L_N$ such that $L_N^d=N/\rho$. Fix $R\in\N$, then
\begin{equation}\label{eq:MeanShort}
    \E^{\ssup{{\rm bc},N}}_{\L_N}[{\rm N}^{\sqsup {1,R}}_{\L_N}] \sim \rhoc^{\ssup{R}}\alN,\qquad N\to\infty.
\end{equation} 
Furthermore, for any $\e>0$, there exists $C_{\e}>0$ such that, for all $N\in\N$,
\begin{equation}\label{eq:UpperBoundShortLoops}
    \P^{\ssup{{\rm bc},N}}_{\L_N}\rk{\abs{\frac1{|\L_N |}{\rm N}^{\sqsup {1,R}}_{\L_N}-\rhoc^{\ssup R}}>\e}\le \ex^{-C_\e \alN}\, .
\end{equation}
\end{proposition}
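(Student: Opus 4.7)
The plan is to exploit the independence of $(X_k)_{k=1}^{R}$ under $\PPP^{\ssup{\bc,N}}_{\L_N}$ and apply a standard Chernoff/Cram\'er argument, using as the only non-trivial input the small-$k$ asymptotics for $\q_k^{\ssup{\L_N,\bc}}$ provided by Lemma~\ref{CorollaryQkSupercritical}(iv). The statement involves no conditioning on $\{\Partn_{\L_N}=N\}$, which is precisely what lets us work with the independent Poisson coordinates directly.

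First I would handle the mean. Since ${\rm N}^{\sqsup{1,R}}_{\L_N}=\sum_{k=1}^{R}k X_k$ with independent $X_k\sim\mathrm{Poi}(\q_k^{\ssup{\L_N,\bc}}/k)$, one has
\begin{equation*}
\E^{\ssup{\bc,N}}_{\L_N}\big[{\rm N}^{\sqsup{1,R}}_{\L_N}\big]=\sum_{k=1}^{R}\q_k^{\ssup{\L_N,\bc}}.
\end{equation*}
By Lemma~\ref{CorollaryQkSupercritical}(iv), for each fixed $k\leq R$ we have $\q_k^{\ssup{\L_N,\bc}}=\alN(2\pi\beta k)^{-d/2}(1+o(1))$ as $L_N\to\infty$, and summing over the bounded range $k=1,\dots,R$ yields \eqref{eq:MeanShort}.

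For the exponential concentration~\eqref{eq:UpperBoundShortLoops} I would apply the exponential Markov inequality. For $\lambda\in\R$, the Poisson moment generating function together with independence gives
\begin{equation*}
\E^{\ssup{\bc,N}}_{\L_N}\big[\ex^{\lambda\,{\rm N}^{\sqsup{1,R}}_{\L_N}}\big]=\prod_{k=1}^{R}\exp\!\Big(\tfrac{\q_k^{\ssup{\L_N,\bc}}}{k}\big(\ex^{\lambda k}-1\big)\Big).
\end{equation*}
Plugging in the $\q_k$-asymptotics and dividing through by $\alN$, the Chernoff exponent per unit volume on the upper-tail event $\{{\rm N}^{\sqsup{1,R}}_{\L_N}\geq\alN(\rhoc^{\ssup R}+\e)\}$ reads
\begin{equation*}
-\lambda(\rhoc^{\ssup R}+\e)+f_R(\lambda)+o(1),\qquad f_R(\lambda):=\sum_{k=1}^{R}\frac{\ex^{\lambda k}-1}{k(2\pi\beta k)^{d/2}}.
\end{equation*}
The function $f_R$ is smooth and convex with $f_R(0)=0$ and $f_R'(0)=\rhoc^{\ssup R}$, so for sufficiently small $\lambda>0$ the expression above equals $-\lambda\e+\Ocal(\lambda^2)$, which is strictly negative; fixing such a $\lambda$ produces a constant $C_\e^{+}>0$ and the upper-tail bound $\ex^{-C_\e^{+}\alN}$. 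The lower-tail event $\{{\rm N}^{\sqsup{1,R}}_{\L_N}\leq\alN(\rhoc^{\ssup R}-\e)\}$ is handled symmetrically by taking $\lambda<0$ of small modulus and again using the convexity of $f_R$; setting $C_\e=\min(C_\e^{+},C_\e^{-})$ completes the argument.

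I do not expect any serious obstacle here: the m.g.f.\ of an independent Poisson sum is explicit, the optimisation over $\lambda$ is a standard Cram\'er exercise, and the only analytic ingredient is the small-$k$ expansion of $\q_k^{\ssup{\L_N,\bc}}$, which is already recorded in Lemma~\ref{CorollaryQkSupercritical}(iv) and holds uniformly across the admissible boundary conditions. The only thing one has to monitor carefully is that the $o(1)$ coming from the $\q_k$-asymptotics is uniform in $k\in\{1,\dots,R\}$, which is automatic since $R$ is fixed.
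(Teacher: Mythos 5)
Your proposal is correct and follows essentially the same route as the paper: Campbell's formula for the mean, the explicit Poisson moment generating function combined with the exponential Chebyshev/Markov inequality, and the small-$k$ asymptotics of $\q_k$ from Lemma~\ref{CorollaryQkSupercritical}(iv), followed by choosing the exponential parameter small enough so that the linear $-\lambda\e\alN$ term dominates the quadratic error. The only cosmetic difference is that the paper centres the exponent (working with $\ex^{sj}-1-sj=\Ocal(s^2)$) while you phrase the same cancellation via $f_R(0)=0$, $f_R'(0)=\rhoc^{\ssup R}$ and convexity of $f_R$.
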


\begin{proof} 
We write $\L$ for $\L_N$. We use the Campbell formula $\E^{\ssup{{\rm bc},N}}_\L[{\rm N}^{\sqsup {1,R}}_\L] =\sum_{j=1}^R\q^{\ssup{\L,\bc}}_j$ and note that $\q^{\ssup{\L,\bc}}_j  \sim \al(2\pi \beta j)^{-d/2}$ as $N\to\infty$ by \eqref{eq:QkSubcritical}, which implies \eqref{eq:MeanShort}.

Now we prove \eqref{eq:UpperBoundShortLoops}. According to \eqref{eq:MeanShort}, we are considering the probability of deviations of ${\rm N}^{\sqsup {1,R}}_\L$ from its mean. 
We handle only the upper deviations, since the lower deviations are handled in the same way (with negative $s$ instead of positive $s$). By the exponential Chebyshev inequality, we have that for any $s>0$,
\begin{equation}\label{EquationMarkovInequalty}
\begin{aligned}
    \P^{\ssup{{\rm bc},N}}_\L\rk{{{\rm N}^{\sqsup {1,R}}_\L-\rhoc^{\ssup R}\al}>\e\al}
    &\le \ex^{-s\e \al}\E^{\ssup{{\rm bc},N}}_\L\ek{\ex^{s\rk{{\rm N}^{\sqsup {1,R}}_\L-\E^{\ssup{{\rm bc},N}}_\L\ek{{\rm N}^{\ssup{\leq R}}_\L}}}}\\
    &=\ex^{-s\e \al}\exp\rk{\sum_{j=1}^{R}\frac{1}{j}\rk{\ex^{s j}-1-sj}\q^{\ssup{\L,\bc}}_j}\, ,
\end{aligned}
\end{equation}
where we used Campbell's formula in the second step. Note that $\frac{1}{j}\rk{\ex^{s j}-1-sj}=\Ocal\rk{s^2}$ for $s\downarrow 0$. Also using that $\sum_{j=1}^{R} \q^{\ssup{\L,\bc}}_j\sim \rhoc^{\ssup R}|\L|$, we see that
\begin{equation}
    \sum_{j=1}^{R}\frac{1}{j}\rk{\ex^{s j}-1-sj}\q^{\ssup{\L,\bc}}_j=\Ocal\rk{s^2\al}\, .
\end{equation}
By choosing $s$ sufficiently small (not depending on $\L$, but on $\e$), the linear term in the exponential in \eqref{EquationMarkovInequalty} dominates. This implies \eqref{eq:UpperBoundShortLoops}.
\end{proof}

Now we control the deviations of the number ${\rm N}_\L^{\sqsup{R+1,{\Nmin}}}=\Ns_\L-{\rm N}_\L^{\sqsup {1,R}} $ of particles in loops of lengths between $R+1$ and ${\Nmin}$. It turns out that ${\rm N}_\L^{\sqsup{R+1,{\Nmin}}}$ is strongly concentrated on $(\rhoc-\rhoc^{\ssup R})|\L|$. Even more, the probability of the deviations again decays exponentially, however, not in $|\L_N|$, but in $|\L_N|^{1-d/2}$, but with an arbitrarily large prefactor. The proof is similar to the one of Proposition~\ref{prop:ShortLoops}.

\begin{proposition}[Particles in short, but not $R$-short loops]\label{prop:MeanShort}Fix $\rho\in(\rhoc,\infty)$ and let $\L_N$ be the centred box with volume $N/\rho$.    Fix $R\in\N$, then
\begin{equation}\label{eq:MeanLShort}
    \E^{\ssup{{\rm bc},N}}_{\L_N}[{\rm N}_{\L_N}^{\sqsup{R+1,{\Nmin}}}] \sim (\rhoc-\rhoc^{\ssup{R}})\alN,\qquad N\to\infty.
\end{equation}
Furthermore, for any $\e>0$ and for any $\kappa>0$, we have, for all large $N\in\N$,
\begin{equation}\label{eq:UpperBoundLShortLoops}
    \PPP^{\ssup{{\rm bc},N}}_{\L_N}\rk{\abs{\frac 1{|\L_N|}{\rm N}_{\L_N}^{\sqsup{R+1,{\Nmin}}}-(\rhoc-\rhoc^{\ssup R})}>\e}\le \ex^{-\kappa \alN^{1-d/2}}\, .
\end{equation}
Furthermore, the latter assertion remains true if $\e$ is replaced by a sequence $\e_N$, tending to zero as $N\to\infty$ slowly (say at logarithmic speed in $N$).
\end{proposition}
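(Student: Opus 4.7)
The plan follows the two-step template of Proposition~\ref{prop:ShortLoops} -- Campbell's formula for the mean, exponential Chebyshev for the concentration -- with one essential adaptation: the summation range now extends up to $\Nmin\to\infty$, which forces the tilting parameter in the Chebyshev step to shrink accordingly.

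First, the mean asymptotic \eqref{eq:MeanLShort} follows from Campbell's formula
\[
\EPP^{\ssup{\bc,N}}_{\L_N}[{\rm N}_{\L_N}^{\sqsup{R+1,\Nmin}}]=\sum_{j=R+1}^{\Nmin}\q_j^{\ssup{\L_N,\bc}},
\]
split at an intermediate scale $j_0 = AL_N^2$, with $A$ sent to infinity after $N$. For $R<j\le j_0$, the subcritical asymptotics in Lemma~\ref{CorollaryQkSupercritical}(iv) yield the main contribution $(\rhoc-\rhoc^{\ssup R})|\L_N|(1+o(1))$. For $j>j_0$, Lemma~\ref{CorollaryQkSupercritical}(iii) gives either exponential decay (Dirichlet, $\lambda_1>0$), hence $o(|\L_N|)$, or only $\q_j=O(1)$ (periodic/Neumann, $\lambda_1=0$), in which case the contribution is $O(\Nmin)=o(|\L_N|)$ since $d\ge 3$.

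For the concentration bound \eqref{eq:UpperBoundLShortLoops}, I would apply the exponential Chebyshev inequality exactly as in \eqref{EquationMarkovInequalty}, yielding, for $\theta>0$,
\[
\PPP^{\ssup{\bc,N}}_{\L_N}\bigl({\rm N}_{\L_N}^{\sqsup{R+1,\Nmin}}-\EPP[\cdot]>\e|\L_N|\bigr)\le\exp\Bigl(-\theta\e|\L_N|+\sum_{j=R+1}^{\Nmin}\frac{\q_j}{j}(\ex^{\theta j}-1-\theta j)\Bigr).
\]
The pivotal choice is $\theta=c/\Nmin$ with a (large) constant $c$, so that $\theta j\le c$ throughout the summation range. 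The elementary bound $\ex^x-1-x\le\frac12 x^2\ex^x$ then controls the exponential sum by $\frac{\ex^c c^2}{2\Nmin^2}\sum_{j\le\Nmin} j\q_j$. A direct estimate of $\sum_{j\le\Nmin}j\q_j$ via Lemma~\ref{CorollaryQkSupercritical} (splitting again at $j_0=AL_N^2$) gives order $\Nmin^2$ for periodic/Neumann -- where the tail $j\in[L_N^2,\Nmin]$ with $\q_j\asymp 1$ dominates -- and no more than $L_N^d$ for Dirichlet, so the quadratic correction is of constant order in $N$. Hence the exponent is dominated by the linear term $-\theta\e|\L_N|=-c\e|\L_N|/\Nmin\asymp -c\e|\L_N|^{1-2/d}$ (modulo the logarithmic factor built into $\Nmin$), which for $c$ large enough yields the desired rate. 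The lower deviation $<-\e|\L_N|$ is treated symmetrically with $\theta<0$.

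The main obstacle is the unboundedness of the summation range. Unlike in Proposition~\ref{prop:ShortLoops}, a fixed $\theta$ is unusable because $\ex^{\theta\Nmin}$ blows up; the forced scaling $\theta\asymp 1/\Nmin$ is precisely what reduces the rate from the full volume $|\L_N|$ of Proposition~\ref{prop:ShortLoops} to the stretched scale $|\L_N|/\Nmin\asymp|\L_N|^{1-2/d}$. The last assertion, allowing $\e_N\to 0$ at logarithmic speed, is then immediate from the same estimate: the linear term $-c\e_N|\L_N|/\Nmin$ continues to dominate the constant quadratic correction as long as $\e_N$ decays slower than any negative power of $|\L_N|$, which is guaranteed by logarithmic decay.
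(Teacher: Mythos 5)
The mean computation \eqref{eq:MeanLShort} is essentially correct and matches the paper's approach (Campbell's formula plus a split of the sum), though the cut $j_0 = AL_N^2$ with $A$ constant does not quite put you in the regime $j/L^2 = o(1)$ where Lemma~\ref{CorollaryQkSupercritical}(iv) applies; the paper cuts at $N^{2/d-\delta}$, i.e.\ at $L^{2-\delta'}$, precisely so that $j/L^2\to 0$ on the left piece.

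The concentration step has a genuine gap. With your choice $\theta = c/\Nmin$, the linear term in the Chernoff exponent is $-\theta\e|\L_N| = -c\e\,L_N^{d-2}/\log^{1/2}N$, since $\Nmin\asymp L_N^2\log^{1/2}N$. This is \emph{not} $-\kappa L_N^{d-2}$ for any $\kappa>0$; in fact, for any fixed $c$, the rate $c\e/\log^{1/2}N\to 0$, so the claimed bound $\ex^{-\kappa|\L_N|^{1-2/d}}$ fails for $N$ large. You also cannot fix this by letting $c$ grow with $N$: to recover $\kappa L_N^{d-2}$ you would need $c\sim\kappa\log^{1/2}N/\e$, which is equivalent to choosing $\theta\asymp 1/L_N^2$, and then $\theta j$ is no longer bounded on the upper part of the range, so the crude bound $\ex^x-1-x\le\tfrac12 x^2\ex^x$ with a uniform constant $\ex^c$ collapses. (There is also a secondary issue: your claim that the quadratic correction $\frac{\ex^c c^2}{2\Nmin^2}\sum_{j\le\Nmin}j\q_j$ is ``of constant order'' fails for $d\ge5$, where it grows like $L_N^{d-4}/\log N$; it is still dominated by the linear term, but only because the latter is of order $L_N^{d-2}/\log^{1/2}N$.)

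The idea you are missing is the one the paper uses for the upper deviations: take the larger tilt $\alpha=s/L_N^2$ with $s=\kappa/\e$, and split the sum at $L_N^{2-\delta}$. On $j\le L_N^{2-\delta}$ one has $\alpha j\le sL_N^{-\delta}\to0$, so the Taylor estimate $\ex^w-1\le w+w^2\ex^w$ applies with $\ex^w\to1$ and the contribution is $(1+o(1))sL_N^{d-2}(\rhoc-\rhoc^{\ssup R})$. On $L_N^{2-\delta}<j\le\Nmin$ one only uses $\q_j\le C$ and the $1/j$ factor: the contribution is at most $C\ex^{s\log^{1/2}N}\log\big(L_N^\delta\log^{1/2}N\big) = o(L_N^{d-2})$. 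This is precisely the split that lets you keep $\theta\asymp1/L_N^2$ -- hence the full rate $\kappa L_N^{d-2}$ -- despite $\theta\Nmin\to\infty$. The lower-deviation half is easier, since for $\theta<0$ the moment-generating function never blows up, so $\theta=-s/L_N^2$ works directly.
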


\begin{proof} Again, we write $\L$ instead of $\L_N$, and we drop the superscript \lq$N$\rq\ in the probability measure. We use Campbell's formula and split, for small $\delta>0$,
\begin{equation}\label{eq:expSubcrit}
    \Ebc_\L[{\rm N}_\L^{\sqsup{R+1,{\Nmin}}}] =\sum_{k=R+1}^{N^{\frac 2d-\delta}}\q_k^\ssup{\L,\bc}+\sum_{N^{\frac 2d-\delta}<k\leq \Nmin}\q_k^\ssup{\L,\bc}\, .
\end{equation}
By \eqref{eq:QkSubcritical}, we get that
\begin{equation}\label{eq:sumSubcrit}
  \sum_{k=R+1}^{N^{\frac 2d-\delta}}\q_k^\ssup{\L,\bc}  \sim \al\sum_{k=R+1}^{N^{\frac 2d-\delta}}\frac{1}{\rk{2\pi\beta j}^{d/2}}\sim (\rhoc-\rhoc^{\ssup R})\al \, ,
\end{equation}
and on the other hand,
\begin{equation}\label{sumonlargek}
    \sum_{N^{\frac 2d-\delta}<k\leq \Nmin}\q_k^\ssup{\L,\bc}\le \Ocal\rk{\al}\sum_{N^{\frac 2d-\delta}<k\leq \Nmin}\frac{1}{\rk{2\pi\beta k}^{d/2}}=\Ocal(|\L|^{\frac 2d+\delta(\frac d2-1)})=o\rk{\al}\, .
\end{equation}
This concludes the proof of \eqref{eq:MeanLShort}.

Now we prove \eqref{eq:UpperBoundLShortLoops}. First we proceed with estimating the probability of deviation from below. We use the exponential Chebyshev inequality with some $\alpha\in(0,\infty)$, to obtain
      \begin{equation}
          \PPP^{\ssup{{\rm bc}}}_{\L}\rk{\frac 1{|\L|}{{\rm N}^\sqsup{R+1,{\Nmin}}_{\L}\leq \rhoc-\rhoc^{\ssup R}-\e}}\leq \ex^{\alpha\al(\rhoc-\rhoc^{\ssup R}-\e)}\EPP^{\ssup{{\rm bc}}}_{\L}\Big[\ex^{-\alpha{\rm N}^\sqsup{R+1,{\Nmin}}_{\L}}\Big]\, .
          \end{equation} 
Then we use the Campbell formula and the estimate, for some small $\delta>0$, according to Lemma~\ref{CorollaryQkSupercritical}(iv), $\q_k=(1+o(1)) |\L| (2\pi \beta k)^{-d/2} $ for $k\leq L^{2-\delta}$ to get, picking 
      $\alpha =sL^{-2}$ for some $s\in(0,\infty)$,
\begin{equation}
\begin{aligned}
    \EPP^{\ssup{{\rm bc}}}_{\L}&\Big[\ex^{-\alpha{\rm N}^\sqsup{R+1,{\Nmin}}_{\L}}\Big] =\exp\Big\{\sum_{k=R+1}^{\Nmin}(\ex^{-\alpha k}-1)\frac  {\q_k} k\Big\}\\
    &\leq \exp\Big\{ (1+o(1))\sum_{k=R+1}^{L^{2-\delta}}\rk{\ex^{-skL^{-2}}-1}\frac{\al}{(2\pi\beta k)^{d/2}}\frac 1 k\Big\}\\
          &\leq \exp\Big\{ -(1+o(1))\frac{L^d}{(2\pi\beta)^{d/2}} sL^{-2}\zeta(d/2)\Big\}
          = \exp\Big\{ -s\rhoc L^{d-2}\rk{1+o(1)}\Big\}\,,
\end{aligned}
\end{equation}
where we estimated $\ex^{-w}-1\leq -(1+o(1)) w$ for $w\in[0,s L^{-\delta}]$ and $\sum_{k=R+1}^{L^{2-\delta}} k^{-d/2}\geq \zeta(d/2)(1+o(1))$. Taking $s= \kappa/\e$ yields the lower-deviations claim.

To estimate the probability of the upper deviations, we proceed at the beginning analogously with \lq$\leq\rhoc-\rhoc^{\ssup R}-\e$\rq\ replaced by \lq$\geq\rhoc-\rhoc^{\ssup R}+\e$\rq, and have now to handle positive exponential moments:
\begin{equation}
\begin{aligned}
    \EPP^{\ssup{{\rm bc}}}_{\L}\Big[\ex^{\alpha{\rm N}^\sqsup{R+1,{\Nmin}}_{\L}}\Big]        &=\exp\Big\{\sum_{k=R+1}^{\Nmin}(\ex^{\alpha k}-1)\frac  {\q_k} k\Big\}\, .
\end{aligned}
\end{equation}
The sum on $k=R +1,\dots,\Nmin$ is split into the some on $k=R+1,\dots, L^{2-\delta}$ and the remainder for some small $\delta>0$. For $k$ in the first sum, we use again  Lemma~\ref{CorollaryQkSupercritical}(iv) to write $\q_k=(1+o(1)) |\L| (2\pi \beta k)^{-d/2} $. For the exponential term, this time we use  $\ex^{w}-1\leq w+w^2 \ex^{w}$ for $w\in(0,s L^{-\delta}]$. Hence, the first sum gives
\begin{equation}
\begin{aligned}
    \sum_{k=R+1}^{L^{2-\delta}}&(\ex^{\alpha k}-1)\frac  {\q_k} k
    \\
    &=\rk{1+o(1)}\frac{ L^d}{(2\pi\beta)^{d/2}}\Big[ sL^{-2}\sum_{k=R+1}^{L^{2-\delta}}k^{-d/2}+s^2 L^{-4}\ex^{s L^{-\delta}}\sum_{k=R+1}^{L^{2-\delta}}k^{-d/2+1}\Big]\\
    &\leq \rk{1+o(1)} s L^{d-2}\Big[\rhoc-\rhoc^{\ssup R}+ L^{-2}\ex^{s L^{-\delta}}\sum_{k=1}^{L^{2-\delta}}k^{-d/2+1}\Big]\, .
\end{aligned}
\end{equation}
Now we see that the last term in the square brackets vanishes as $N\to\infty$, by distinguishing $d=3$, $d=4$ and $d\geq 5$. Hence, this part of the sum in the exponential term is not larger than $(1+o(1)) s L^{d-2}(\rhoc-\rhoc^{\ssup R})$.

Next, we turn to the sum on $k=L^{2-\delta},\dots,\Nmin$. We use \eqref{eq:QkSupercritical} to estimate crudely $\q_k^\ssup{\L,\bc}\leq C \ex^{-k \beta \lambda_1 L^{-2}}\leq C$ and obtain, putting $\alpha=s L^{-2}$, and recalling that $\Nmin=L^2 \log^{1/2}(N)$,
\begin{equation}
\sum_{k=L^{2-\delta}}^{\Nmin}(\ex^{\alpha k}-1)\frac  {\q_k} k
\leq C \ex^{s\log^{1/2}(N)}\sum_{k=L^{2-\delta}}^{\Nmin}\frac 1k
\leq C\ex^{s\log^{1/2}(N)} \log \big(L^\delta\log^{1/2}(N)\big)\, ,
\end{equation}
which is $o(L^{d-2})$. Summarising, we get
\begin{equation}
\begin{split}
    \PPP^{\ssup{{\rm bc}}}_{\L}&\rk{\frac 1{|\L|}{{\rm N}^\sqsup{R+1,{\Nmin}}_{\L}\geq \rhoc-\rhoc^{\ssup R}+\e}}\\
    &\leq \ex^{s L^{-2}\al(\rhoc-\rhoc^{\ssup R}-\e)}\ex^{(1+o(1)) s L^{d-2}(\rhoc-\rhoc^{\ssup R})} \ex^{-(1+o(1)) s L^{d-2}\e}.
\end{split}
\end{equation} 
Taking $s=\kappa/\eps$, the proof of \eqref{eq:UpperBoundLShortLoops} is finished.

The additional assertion is seen to follow from the above proof by taking $s=s_N= \kappa/\e_N$, tending slowly to infinity in a way that all the steps in the above proof remain true.
\end{proof}

Putting together Propositions~\ref{prop:ShortLoops} and \ref{prop:MeanShort}, we obtain the corresponding estimate for $\Ns_\L$.

\begin{cor}[Deviations of the particle number in short loops]\label{Cor:shortloopsdev}
For any $\e>0$ and for any $\kappa>0$, we have,  for all large  $N\in\N$,
\begin{equation}\label{eq:UpperBoundLShortLoopsjoint}
    \PPP^{\ssup{{\rm bc},N}}_{\L_N}\rk{\abs{\frac 1{|\L_N|}{\rm N}_{\L_N}^{\ssup{\rm short}}-\rhoc}>\e}\le \ex^{-\kappa \alN^{1-d/2}}\, .
\end{equation}
The latter assertion remains true if $\e$ is replaced by a sequence $\e_N$, tending to zero as $N\to\infty$ suitably slowly.
\end{cor}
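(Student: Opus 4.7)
\begin{proofsect}{Proof proposal for Corollary~\ref{Cor:shortloopsdev}}
The plan is to combine Propositions~\ref{prop:ShortLoops} and~\ref{prop:MeanShort} through the additive decomposition $\Ns_{\L_N} = {\rm N}_{\L_N}^{\sqsup{1,R}} + {\rm N}_{\L_N}^{\sqsup{R+1,\Nmin}}$, valid for any fixed $R \in \N$, together with the identity $\rhoc = \rhoc^{\ssup{R}} + (\rhoc-\rhoc^{\ssup{R}})$. First I would pick any $R$ (say $R=1$), write the centred quantity $|\L_N|^{-1}\Ns_{\L_N}-\rhoc$ as the sum of the two analogous centred sub-quantities, and apply the triangle inequality together with a union bound, so that the event $\{\,|\,|\L_N|^{-1}\Ns_{\L_N}-\rhoc|>\e\}$ is contained in the union of the corresponding events with threshold $\e/2$ for each of the two pieces.

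Next I would bound the first probability by $\ex^{-C_{\e/2}|\L_N|}$ via Proposition~\ref{prop:ShortLoops}, and the second by $\ex^{-2\kappa|\L_N|^{1-2/d}}$ via Proposition~\ref{prop:MeanShort} (applied with $2\kappa$ in place of $\kappa$). Since $d \geq 3$ ensures $|\L_N| \geq |\L_N|^{1-2/d}$ for all large $N$, the first (full-volume exponential) bound is eventually dominated by the second (stretched-exponential) one, so their sum is at most $\ex^{-\kappa |\L_N|^{1-2/d}}$ for all sufficiently large $N$, as claimed. The sole non-trivial observation is that combining bounds by union preserves the \emph{weaker} concentration rate, which here is precisely the stretched-exponential rate coming from Proposition~\ref{prop:MeanShort}; the $R$-short loops, concentrating on the full-volume scale, contribute only a negligible correction.

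For the final assertion, with $\e$ replaced by a slowly vanishing $\e_N$, Proposition~\ref{prop:MeanShort} already handles its own piece via its stated extension. For Proposition~\ref{prop:ShortLoops}, I would revisit the exponential-Chebyshev argument in its proof and observe that the constant $C_\e$ can be taken of order $\e^2$ for small $\e$ (by optimising the parameter $s$ in \eqref{EquationMarkovInequalty}), so that $\ex^{-C_{\e_N/2}|\L_N|}$ still beats $\ex^{-\kappa |\L_N|^{1-2/d}}$ as long as $\e_N^2 |\L_N|^{2/d} \to \infty$, which holds comfortably at any logarithmic rate of decay. There is no genuine obstacle in the argument; the work has already been done by the two preceding propositions, and the only item to monitor is the compatibility of the two rates, which is automatic because Proposition~\ref{prop:MeanShort} imposes the strictly weaker constraint.
\end{proofsect}
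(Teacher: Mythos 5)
Your proposal is correct and is exactly what the paper intends: the paper presents Corollary~\ref{Cor:shortloopsdev} immediately after Propositions~\ref{prop:ShortLoops} and~\ref{prop:MeanShort} with only the remark ``Putting together\ldots'', and your decomposition $\Ns_{\L_N}={\rm N}_{\L_N}^{\sqsup{1,R}}+{\rm N}_{\L_N}^{\sqsup{R+1,\Nmin}}$, union bound at threshold $\e/2$, comparison of the full-volume rate $\ex^{-C_{\e/2}\alN}$ with the stretched-exponential rate from Proposition~\ref{prop:MeanShort}, and the absorption of the factor~$2$ by downgrading $2\kappa\to\kappa$ is precisely the omitted argument. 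You also correctly identify the two points that need a word: the exponent in the displayed bound is a typo for $\alN^{1-2/d}$ (visible from $L^{d-2}=\alN^{1-2/d}$ in the proof of Proposition~\ref{prop:MeanShort}), and the $\e_N$-version requires tracking that $C_\e\asymp\e^2$ in Proposition~\ref{prop:ShortLoops}, so that $\e_N^2\alN\gg\alN^{1-2/d}$ at logarithmic decay.
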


Let us now give a rough lower bound for the probability that the total number of particles is equal to $N$, i.e., for the term in the denominator in \eqref{eq:rdm_bc}. We fix a diffusive or periodic boundary conditions.

\begin{lemma}[Lower bound for the denominator in~\eqref{eq:rdm_bc}]\label{PropsotionUpperBoundTotal}
Fix $\rho\in(\rhoc,\infty)$ and let $\L_N$ be the centred box with volume $N/\rho$. Then, if $\l_1>0$,
\begin{equation}\label{eq:UpperBoundTotal}
    \PPP^{\ssup{{\bc},N}}_{\L_N}\rk{\Partn_{\L_N}=N}\geq \ex^{-(\rho-\rhoc)\beta\lambda_1\alN^{1-\frac 2d }\rk{1+o(1)}}\,,\qquad N\to\infty .
\end{equation}
If $\l_1=0$, the lower bound is polynomial: for any $\eps>0$,
\begin{equation}\label{eq:UpperBoundTotal_0}
     \PPP^{\ssup{{\bc},N}}_{\L_N}\rk{\Partn_{\L_N}=N}\geq |\L_N|^{-2-\frac 2d-\eps},\qquad N\to\infty.
\end{equation}
\end{lemma}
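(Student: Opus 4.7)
The plan is to construct a simple family of configurations for which $\Partn_{\L_N}=N$ and bound the probability of this family from below using the independence of the $X_k$'s. Choose a sequence $\e_N\downarrow 0$ slowly enough that the additional assertion of Corollary~\ref{Cor:shortloopsdev} applies, and set $K_N\defeq \N\cap[(\rho-\rhoc-\e_N)|\L_N|,\,(\rho-\rhoc+\e_N)|\L_N|]$. For each $k^*\in K_N$, the event
\begin{equation*}
    \{X_{k^*}=1\}\cap\{X_k=0\text{ for all }k>\Nmin,\,k\neq k^*\}\cap\{\Ns_{\L_N}=N-k^*\}
\end{equation*}
forces $\Partn_{\L_N}=N$, and these events are disjoint in $k^*$. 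Since $\P(X_{k^*}=1)=(\q_{k^*}/k^*)\ex^{-\q_{k^*}/k^*}$, the factor $\ex^{-\q_{k^*}/k^*}$ merges with the product $\prod_{k\neq k^*}\ex^{-\q_k/k}$, and independence of the $X_k$'s (short vs.\ long) yields
\begin{equation*}
    \PPP^{\ssup{\bc,N}}_{\L_N}(\Partn_{\L_N}=N)\geq q_N\,\PPP^{\ssup{\bc,N}}_{\L_N}(\Ns_{\L_N}\in N-K_N),
\end{equation*}
where $q_N\defeq \min_{k^*\in K_N}\tfrac{\q_{k^*}^{\ssup{\L_N,\bc}}}{k^*}\exp\bigl(-\sum_{\Nmin<k\leq N}\q_k^{\ssup{\L_N,\bc}}/k\bigr)$. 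By Corollary~\ref{Cor:shortloopsdev}, the second factor is $\geq 1-\ex^{-\kappa|\L_N|^{1-2/d}}\to 1$.

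To lower bound $q_N$, the key input is Lemma~\ref{CorollaryQkSupercritical}(iii): since $k^*L_N^{-2}\geq c|\L_N|^{1-2/d}\to\infty$ for $k^*\in K_N$ (using $d\geq 3$), one has $\q_{k^*}\geq \ex^{-\beta\lambda_1 k^*L_N^{-2}}$ and $\q_k\leq(1+o(1))\ex^{-\beta\lambda_1 kL_N^{-2}}$ uniformly over $k>\Nmin$. Since $\q_{k^*}/k^*$ is decreasing in $k^*$ over $K_N$, the minimum is attained at the right endpoint, yielding
\begin{equation*}
    q_N\geq \frac{\ex^{-\beta\lambda_1(\rho-\rhoc+\e_N)|\L_N|^{1-2/d}}}{(\rho-\rhoc+\e_N)|\L_N|}\,\exp\bigl(-(1+o(1))S_N\bigr),\qquad S_N\defeq \sum_{\Nmin<k\leq N}\frac{\ex^{-\beta\lambda_1 kL_N^{-2}}}{k}.
\end{equation*}

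In the case $\lambda_1>0$, $S_N$ is a geometric sum bounded by $CL_N^2\Nmin^{-1}\ex^{-\beta\lambda_1\log^{1/2}N}=o(1)$, so the trailing exponential is $1+o(1)$; the polynomial prefactor $((\rho-\rhoc+\e_N)|\L_N|)^{-1}$ equals $\ex^{-o(|\L_N|^{1-2/d})}$ (valid for $d\geq 3$) and is absorbed into a $(1+o(1))$ in the exponent, giving~\eqref{eq:UpperBoundTotal} after $\e_N\to 0$. In the case $\lambda_1=0$, all exponential weights equal $1$ so $S_N=(1-2/d+o(1))\log N$, whence $q_N\geq C|\L_N|^{-(2-2/d)(1+o(1))}$, which for any fixed $\e>0$ and $N$ large exceeds $|\L_N|^{-2-\frac{2}{d}-\e}$, proving~\eqref{eq:UpperBoundTotal_0}. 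The only (modest) subtlety is coordinating $\e_N$: slowly enough that the concentration in Corollary~\ref{Cor:shortloopsdev} continues to apply and $K_N\neq\emptyset$, and (automatically, since $\e_N\to 0$) fast enough that $\e_N|\L_N|^{1-2/d}$ is an $o(1)$ correction relative to $(\rho-\rhoc)|\L_N|^{1-2/d}$ in the exponent.
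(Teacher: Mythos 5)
Your argument is correct and follows the same overall strategy as the paper's proof: lower-bound $\PPP_{\L_N}^{\ssup{\bc,N}}(\Partn_{\L_N}=N)$ by restricting to configurations with exactly one macroscopic loop of length $\approx(\rho-\rhoc)|\L_N|$ together with concentrated short-loop mass $\approx\rhoc|\L_N|$, then exploit independence and the estimates of Lemma~\ref{CorollaryQkSupercritical}. Two technical choices differ, and the first one buys you something. You place the cutoff at $\Nmin$, whereas the paper uses $R_N=N^{2/d-\delta}$ and additionally imposes $X_j=0$ for $j\in(R_N,\Nmin]$. Since $kL_N^{-2}\ge\log^{1/2}N\to\infty$ for every $k>\Nmin$, Lemma~\ref{CorollaryQkSupercritical}(iii) gives $\q_k\le(1+o(1))\ex^{-\beta\lambda_1 kL_N^{-2}}$ uniformly over your summation range, so $S_N$ is controlled directly. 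By contrast, on the window $j\in(R_N,\Nmin]$ one has $jL_N^{-2}\to 0$ and there $\q_j\asymp|\L_N|j^{-d/2}\gg 1$ (Lemma~\ref{CorollaryQkSupercritical}(iv)); the paper's $\lambda_1=0$ argument applies the bound $\q_j\le 1+\Ocal(\ex^{-\beta\lambda_2 jL_N^{-2}})$ over all of $j\ge R_N$, which is not available on that window and so needs extra care — your cutoff sidesteps this entirely. Second, you invoke the exponential concentration of Corollary~\ref{Cor:shortloopsdev} (in its $\e_N\downarrow 0$ form) where the paper uses a mean/variance estimate on $\sum_{j\le R_N}jX_j$; both suffice since only $\P(\Ns_{\L_N}\in N-K_N)\to1$ is needed. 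Parametrising by the long-loop length $k^*$ is the exact mirror of the paper's parametrisation by the short-loop total $m=N-k^*$, and your remaining computations (the geometric tail estimate for $S_N$ when $\lambda_1>0$, the logarithmic evaluation $S_N=(1-\tfrac2d+o(1))\log N$ when $\lambda_1=0$, and the absorption of the polynomial prefactor into the $(1+o(1))$ in the exponent via $d\ge 3$) are all sound.
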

\begin{proof} We abbreviate $\L=\L_N$ and drop the super-indices. Recall that $\Partn_{\L}=\sum_{k=1}^N k X_k$, where $X_1,\dots,X_N$ are independent Poisson-distributed variables with parameters $\q_1,\dots,\q_N$.
We pick $\delta\in(0,\frac 2d(1-\frac 2d))$, $R=R_N=N^{\frac 2d -\delta}$, a sequence $\eps_N\downarrow 0$, and estimate
\begin{equation}
\begin{split}
&\PPP_{\L_N}\rk{\Partn_{\L_N}=N}=\P\Big(\sum_{j=1}^N j X_j=N\Big)\\
&\geq 
\sum_{m\in [(\rhoc-\eps_N)|\L|,(\rhoc+\eps_N)|\L|]}\, \P\Big(\sum_{j\leq R} j X_j=m\Big)\\
&\phantom{\geq 
\sum_{m\in [(\rhoc-\eps_N)|\L|,(\rhoc+\eps_N)|\L|]}\quad}
\times \P(X_{N-m}=1)\, \P(X_j=0\ \forall j\notin [1,R]\cup\{N-m\})\, .
\end{split}
\end{equation}
Then, uniformly for any $m$ in the above sum, we have $N-m\sim (\rho-\rhoc)|\L|$. First note that $\E[\sum_{j\leq R}j X_j]\sim \rhoc|\L|$ (see Proposition~\ref{prop:MeanShort}) and the standard deviation of $\sum_{j\leq R}j X_j$ is $o(|\L|)$, hence, $\sum_{m\in [(\rhoc-\eps_N)|\L|,(\rhoc+\eps_N)|\L|]} \P(\sum_{j\leq R} j X_j=m)$ tends to one as $N\to\infty$, if $\eps_N$ vanishes at a sufficiently low speed. Moreover:

If $\l_1>0$, we use Lemma~\ref{CorollaryQkSupercritical}(iii) to obtain
\begin{equation*}
\P(X_{N-m}=1)=\ex^{-\frac 1{N-m} \q^{\ssup{\L}}_{N-m}}\frac { \q^{\ssup{\L}}_{N-m}}{N-m} \sim \frac {\q^{\ssup{\L}}_{N-m}}{N-m} = \ex^{-\beta \lambda_1 (\rho-\rhoc)|\L|^{1-\frac 2d}(1+o(1))},\qquad N\to\infty\, .
\end{equation*}
The zero probability for the remaining Poisson variables is estimated as follows, with the help of Lemma~\ref{CorollaryQkSupercritical}(iii) and (iv).
\begin{equation}
\begin{aligned}
    \P(X_j=0\ \forall j\notin [1,R_N]\cup\{N-m\}) &\geq\ex^{-\sum_{j=R_N}^N\frac{1}{j}\q_j}
    \geq \ex^{-\Ocal(L^d) \sum_{j=R_N}^{L^2}j^{-1-d/2}+o(1)}\\
    &=\ex^{-\Ocal(|\L|^{\delta\frac d 2 }) } \geq \ex^{-o(|\L|^{1-\frac 2d})}\, ,
\end{aligned}
\end{equation}
the last step follows from our assumption that $\delta<\frac 2d(1-\frac 2d)$. This finishes the proof of \eqref{eq:UpperBoundTotal}.

In the case $\l_1=0$, we pick a small $\eps>0$ and adapt $\delta$ later. We use Lemma~\ref{CorollaryQkSupercritical}(iii) to estimate 
\begin{equation}
    \P(X_{N-m}=1)=\ex^{-\frac 1{N-m} \q^{\ssup{\L}}_{N-m}}\frac { \q^{\ssup{\L}}_{N-m}}{N-m}\sim\frac { \q^{\ssup{\L}}_{N-m}}{N-m}\geq \frac 1{N-m}\geq \Ocal(\smfrac 1N).
\end{equation}
In order to estimate the zero probability of the other Poisson variables, we estimate, using $\q_j\leq 1+\Ocal(\ex^{-\beta\l_2 jL^{-2}})$, 
\begin{equation}
    \ex^{-\sum_{j=R_N}^N\frac{1}{j}\q_j}\geq \exp\Big(-\log\frac{N}{N^{\frac2d-\delta}} - C\sum_{j=R_N}^N \frac{1}{L^{-2}j}\ex^{-\beta\l_2 jL^{-2}}L^{-2}\Big)\, .
\end{equation}
With an integral comparison, the second term in the exponential can be upper bounded by $\int_1^\infty \d x\,\frac{1}{x}\ex^{-\beta\l_2 x} + \sum_{j=R_N}^{L^2}\frac1j\leq C+ \delta d\log L$, so we have
\begin{equation}
    \ex^{-\sum_{j=R_N}^N\frac{1}{j}\q_j}\geq N^{-\rk{1-\frac2d+\delta}}C\ex^{-C\delta d\log L}\,,
\end{equation}
which is not smaller than $|\L_N|^{-1-\frac 2d-\eps}$, if $\delta$ is picked small enough.
\end{proof}

\subsection{Particle numbers in long loops in the Poisson point process}\label{sec-longloops}

We are now interested in the distribution of the number $\Nl_\L$ of particles in long loops, i.e., longer than ${\Nmin}=\lceil|\L|^ {\frac 2d}\log^{1/2} N\rceil$. For these loops, we need to find precise asymptotics (i.e., up to a factor of $1+o(1)$) for the probability that $\Nl_\L$ is equal to a large number. As this is rather subtle, we need to use finer and non-standard means. In Proposition~\ref{prop:MedLongLoops}, below, we prove a slightly more general assertion, which will then be used in the identification of the limiting distribution of the long loops. 

We will be able to use some well-known fine asymptotics and limiting assertions about uniformly-distributed random partitions from~\cite{arratia2003logarithmic}. For this, we need the density $p\colon (0,\infty)\to[0,\infty)$ of the distribution on $(0,\infty)$ with Laplace-transform
\begin{equation}\label{LaplacetrafoPD}
    (0,\infty)\ni s\mapsto \exp\rk{-\int_0^1\rk{1-\ex^{-sx}\frac{1}{x}}\,\d x}.
\end{equation}
For $x\in[0,1]$, the value of $p(x)$ is constant and is given by $\ex^{-\gamma }$, 
where $\gamma\approx 0.5772$ is the Euler--Mascheroni constant.

Denote $\Nmax= \lceil N^{2/d}\log^2(N)\rceil$.

\begin{proposition}[Distribution of the number of particles in long loops]\label{prop:MedLongLoops}
Fix $\rho\in(\rhoc,\infty)$ and let $\L_N$ be the centred box with volume $N/\rho$. 

Then, for any $y\in(0,\infty)$, in the limit as $N\to\infty$, uniformly in $\Nmax\leq \M\leq k\leq N$ such that $k/\M\to y$,
\begin{equation}\label{eq:asylongloops}
    \PPP^{\ssup{\bc,N}}_{\L_N}\rk{{\rm N}^{\sqsup{\Nmin+1,\M}}_{\L_N}=k}\sim p(y)\ex^{-\beta\lambda_1 k\abs{\L_N}^{-\frac 2d }}\times
    \begin{cases}\frac  1\Nmin
    &\text{if }\l_1>0\, ,\vspace{2mm}\\
     \frac 1 \M&\text{if }\l_1=0\, .
 \end{cases}
\end{equation}
In particular, in the limit as $N\to\infty$, uniformly in $\Nmax\leq k\leq N$,
\begin{equation}\label{eq:LowerBoundSLongLOops}
    \PPP^{\ssup{\bc,N}}_{\L_N}\rk{\Nl_{\L_N}=k}\sim\ex^{-\gamma}
    \ex^{-\beta\lambda_1 k\abs{\L_N}^{-\frac 2d }}\times
    \begin{cases}\frac  1\Nmin
    &\text{if }\l_1>0\, ,\vspace{2mm}\\
    \frac 1{N}&\text{if }\l_1=0\, .
 \end{cases}
\end{equation}
Furthermore, there exists a $C\in(0,\infty)$ such that
\begin{equation}\label{eq:UpperBoundSLongLoops}
    \PPP^{\ssup {\bc,N}}_{\L_N}\rk{\Nl_{\L_N}=k}\leq  C^{k/\Nmin}\ex^{-\beta\lambda_1 k\abs{\L_N}^{-\frac 2d }}\, ,\qquad k\in\{1,\dots,N \}, N\in\N.
\end{equation}
\end{proposition}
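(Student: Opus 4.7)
The plan is to separate an exponential factor $\ex^{-\beta\lambda_1 k L_N^{-2}}$ via a Poisson tilt, and then apply a local limit theorem for sums of the form $\sum k\tilde X_k$ with $\tilde X_k\sim\mathrm{Poisson}(1/k)$. Using Lemma~\ref{CorollaryQkSupercritical}(iii), for $k>\Nmin$ one has $\q_k^{\ssup{\L_N,\bc}} = \ex^{-\alpha k}(1+r_k)$ with $\alpha:=\beta\lambda_1 L_N^{-2}$ and $r_k = O(\ex^{-\beta(\lambda_2-\lambda_1)kL_N^{-2}})=o(1)$ uniformly, since $\Nmin L_N^{-2}\geq \log^{1/2}N$. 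Introducing the independent family $\tilde X_k\sim\mathrm{Poisson}((1+r_k)/k)$ and the variable $\tilde\Partn:=\sum_{\Nmin<k\leq \M}k\tilde X_k$, a direct computation on products of Poisson mass functions yields the tilt identity
\begin{equation}
\PPP^{\ssup{\bc,N}}_{\L_N}\bigl(\Partn_{\L_N}^{\sqsup{\Nmin+1,\M}}=n\bigr)=\ex^{A_N}\ex^{-\alpha n}\P(\tilde\Partn=n), \quad A_N:=\sum_{k=\Nmin+1}^\M\frac{1+r_k}{k}\bigl(1-\ex^{-\alpha k}\bigr).
\end{equation}
For $\lambda_1>0$ the condition $\alpha\Nmin\to\infty$ makes $\ex^{-\alpha k}$ negligible in $A_N$, so $A_N=\log(\M/\Nmin)+o(1)$ and hence $\ex^{A_N}=(\M/\Nmin)(1+o(1))$; for $\lambda_1=0$ one has $\alpha=0$ and $A_N=0$, giving prefactor $1$.

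The core of the argument is the local limit theorem
\begin{equation}\label{eq:plan-LLT}
\M\,\P(\tilde\Partn=n)\longrightarrow p(y)\qquad\text{uniformly for }n/\M\to y\in(0,\infty).
\end{equation}
Weak convergence of $\tilde\Partn/\M$ to the distribution with density $p$ follows from computing the Laplace transform
\begin{equation}
\E\bigl[\ex^{-s\tilde\Partn/\M}\bigr]=\exp\Bigl(\sum_{k=\Nmin+1}^\M\frac{1+r_k}{k}\bigl(\ex^{-sk/\M}-1\bigr)\Bigr),
\end{equation}
recognising it, via the substitution $u=k/\M$, as a Riemann sum converging to $\int_0^1(\ex^{-su}-1)u^{-1}\,\d u$ (using $\Nmin/\M\leq\log^{-3/2}N\to 0$ and $\sum_{k>\Nmin}r_k/k=o(1)$), which is the log-Laplace transform \eqref{LaplacetrafoPD} of $p$. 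The main obstacle will be upgrading this weak convergence to the pointwise density statement \eqref{eq:plan-LLT}: I expect to do this either by invoking the local-density machinery for logarithmic combinatorial structures developed in~\cite{arratia2003logarithmic} (whose assumptions hold because $\tilde X_k$ is a vanishing perturbation of Poisson$(1/k)$), or by a Fourier-inversion estimate requiring uniform control of the characteristic function of $\tilde\Partn$ on $[-\pi,\pi]$ together with the smoothness of $p$ (explicit on $(0,1]$, satisfying a delay-differential equation on $(1,\infty)$). Combining \eqref{eq:plan-LLT} with the tilt identity yields \eqref{eq:asylongloops}; specialising to $\M=N$ and using $p(y)=\ex^{-\gamma}$ on $(0,1]$ (which applies since $n/N\leq 1$ in the range $\Nmax\leq n\leq N$) then gives \eqref{eq:LowerBoundSLongLOops}.

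For the crude exponential upper bound \eqref{eq:UpperBoundSLongLoops} valid throughout $k\in\{1,\dots,N\}$, the plan is to expand the probability combinatorially over long-loop configurations $j_1,\dots,j_m>\Nmin$ with $j_1+\cdots+j_m=k$ and $m\leq k/\Nmin$:
\begin{equation}
\PPP^{\ssup{\bc,N}}_{\L_N}(\Nl_{\L_N}=k)\leq\sum_{m=1}^{\lfloor k/\Nmin\rfloor}\frac{1}{m!}\sum_{\substack{j_1,\dots,j_m>\Nmin\\ j_1+\cdots+j_m=k}}\prod_{i=1}^m\frac{\q_{j_i}^{\ssup{\L_N,\bc}}}{j_i},
\end{equation}
then apply $\q_{j_i}\leq C\ex^{-\alpha j_i}$ to factor out $\ex^{-\alpha k}\Nmin^{-m}$, bound the number of compositions by $\binom{k}{m}\leq k^m/m!$, and sum the resulting Bessel-type series $\sum_m(Ck/\Nmin)^m/(m!)^2$ via the estimate $I_0(2\sqrt x)\leq 2\ex^{2\sqrt x}$ to obtain an upper bound of the form $C^{k/\Nmin}\ex^{-\alpha k}$.
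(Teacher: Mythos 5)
Your Poisson-tilt identity is correct and is a clean reorganisation of what the paper does by factoring $\ex^{-\lambda_1\beta r L^{-2}}$ out of each $\q_r$ inside the partition-sum formula \eqref{longloopsdistribution}. The Laplace-transform/Riemann-sum computation showing weak convergence of $\tilde\Partn/\M$ to the density $p$ is also sound (using that $\Nmin/\M\to 0$ and $\sum r_k/k = o(1)$). Your combinatorial route to \eqref{eq:UpperBoundSLongLoops} via compositions and the Bessel-type series is a legitimate alternative to the paper's shorter observation that the number of partition parts is $\leq k/\Nmin$, so $C^{\sum \partition_r}\leq C^{k/\Nmin}$; both give the same conclusion.

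However, there are two genuine gaps. First, the derivation of \eqref{eq:LowerBoundSLongLOops} by ``specialising to $\M = N$'' does not work: \eqref{eq:asylongloops} is an asymptotic valid along sequences with $k/\M\to y\in(0,\infty)$, whereas in \eqref{eq:LowerBoundSLongLOops} the range of $k$ is $\Nmax\leq k\leq N$, and when $k$ is close to $\Nmax$ (which is $o(N)$ for $d\geq 3$) the ratio $k/N\to 0$ is not bounded away from zero. The paper instead takes $\M=k$, so that $y=1$ and $p(1)=\ex^{-\gamma}$, and then supplies the extra factor $\ex^{-\sum_{r=k+1}^N\q_r/r}$ arising from forcing $X_r=0$ for $r>k$ (which is exactly what $\{\Nl_{\L_N}=k\}$ entails); that factor is $\sim 1$ for $\lambda_1>0$ and $\sim k/N$ for $\lambda_1=0$, which converts the $1/\M=1/k$ into $1/N$ in the second case. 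Your derivation must be corrected accordingly.

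Second, the local limit theorem \eqref{eq:plan-LLT} is precisely the hard technical step and cannot be waved through. The reference result in \cite{arratia2003logarithmic} (their Theorem 4.13) is for the unperturbed family $Y_r\sim\mathrm{Poisson}(1/r)$ only; the paper applies it as such and then shows separately, by a two-sided estimate (the lower bound drops $r_k$, the upper bound uses $\q_r\leq\ex^{-\alpha r}(1+\ex^{-crL^{-2}})$ and a careful expansion over $\{\sum rY_r=k\}$), that the correction from $r_k$ is $o(1/\M)$. If you want to establish the perturbed LLT directly, note that since $r_k\geq 0$ (lower bound of Lemma~\ref{CorollaryQkSupercritical}(iii)) you may write $\tilde X_k = Y_k + Y'_k$ with independent $Y'_k\sim\mathrm{Poisson}(r_k/k)$; then $\P(\tilde\Partn=n)=\sum_{m\geq 0}\P(\sum kY_k=n-m)\P(\sum kY'_k=m)$ and the task reduces to bounding the $m>0$ contribution by $o(1/\M)$ --- this is essentially the content of equations (2.40)--(2.45) in the paper, and it is not immediate, because the convolution couples $m$ up to order $k$. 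Either you carry out this estimate, or you verify that the arratia--Barbour--Tavaré machinery really covers the perturbed intensities; presently your argument has a hole here.
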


\begin{proof} 
Recall from \eqref{partitions} the set $\mathfrak P_k$ of all partitions $\partition$ satisfying $\sum_i i \partition_i=k$ of $k$. Denote by
\begin{equation}\label{Pk*def}
    \Pfrak_k^{\sqsup{\Nmin+1,\M}}=\big\{\partition\in\Pfrak_k\colon \partition_i=0\text{ for }i\notin [\Nmin+1,\M] \big\}
\end{equation} 
the set of all partitions of $k$ with partition sets of sizes in $[\Nmin+1,\M]$. Again, we write $\L$ instead of $\L_N$, drop the boundary condition from the notation, and put $L^d=|\L|$. Assume that $\M,k\in\{N^+,N^+ +1,\dots,N\}$ with $\M\leq k$.

Recall that ${\rm N}^{\sqsup{\Nmin+1,\M}}_{\L}$ is equal in distribution to $\sum_{r=\Nmin+1}^{\M} r X_r$, where $X_1,\dots,X_N$ are independent Poisson-distributed random variables, and $X_r$ has parameter $\frac 1 r \q^{\ssup \L}_r$. Hence, the exact value of the probability in question is
\begin{equation}\label{longloopsdistribution}
    \PPP_\L\rk{{\rm N}^{\sqsup{\Nmin+1,\M}}_{\L}=k}
    =\ex^{-\sum_{r=\Nmin+1}^j\frac 1r \q^{\ssup \L}_r}\sum_{\partition\in\Pfrak_k^{\sqsup{\Nmin+1,\M}}}\prod_{r=\Nmin+1}^{\M}\frac{\rk{\q^{\ssup \L}_{r}}^{\partition_r}}{\partition_r!\,r^{\partition_r}}\, .
\end{equation}
We first compute the exponential prefactor in the first term of the right-hand side of the above equation. By Lemma~\ref{CorollaryQkSupercritical}(iv), we have, uniformly in $r\in\{ \Nmin+1,\dots,N\}$, the asymptotics $ \q^{\ssup \L}_r= \ex^{-\lambda_1 r\beta L^{-2}}(1+\ex^{-c \log^{1/2}(N)})$ for some $c\in(0,1)$ and hence 
\begin{equation}\label{Tsumasy}
\begin{aligned}
    \sum_{r= \Nmin+1}^\M\frac 1r\q^{\ssup \L}_r
    &=(1+\ex^{-c \log^{1/2}(N)}) L^{-2}\sum_{r= \Nmin+1}^\M\frac 1{rL^{-2}}\ex^{-\lambda_1 r\beta L^{-2}}\\
    &=(1+\ex^{-c \log^{1/2}(N)})\int_{{(\Nmin+1)} L^{-2}}^{\M L^{-2}} \frac 1x \ex^{-\lambda_1 x\beta}\, \d x \\
    &=
    \begin{cases}(1+\ex^{-c \log^{1/2}(N)})\log\frac \M{\Nmin}&\mbox{if }\l_1=0\, ,\\
    o(1)&\mbox{if }\l_1>0\, .
    \end{cases}
\end{aligned}
\end{equation}
Hence the first term on the right-hand side of~\eqref{longloopsdistribution} is $\frac \Nmin \M(1+o(1))$ if $\l_1=0$ and $1+o(1)$ if $\l_1>0$. Therefore, for proving~\eqref{eq:asylongloops}, it only remains to show that, uniformly in $\Nmax\leq \M \leq k\leq N$ such that $k/\M\to y\in(0,\infty)$,
\begin{equation}\label{Prop28(1)}
    \sum_{\partition\in\Pfrak_k^{\sqsup{\Nmin+1,\M}}}\prod_{r=\Nmin+1}^{\M}\frac{\rk{\q^{\ssup \L}_{r}}^{\partition_r}}{\partition_r!\,r^{\partition_r}}
\sim \frac{p_1(y)}{{\Nmin}}\ex^{-\beta\lambda_1 k\abs{\L_N}^{-\frac 2d }},\qquad N\to\infty\, .
\end{equation}

\textbf{Proof of lower bound in~\eqref{Prop28(1)}:} Using the lower bound in Lemma~\ref{CorollaryQkSupercritical}(iii), and using that $\sum_{r=\Nmin+1}^{\M} r \partition_r=k$ for $\partition\in \Pfrak_k^{\sqsup{\Nmin+1,\M}}$, we have
\begin{equation}\label{Equation048231}
    \sum_{\partition\in\Pfrak_k^{\sqsup{\Nmin+1,\M}}}\prod_{r=\Nmin+1}^{\M}\frac{\rk{\q^{\ssup \L}_{r}}^{\partition_r}}{\partition_r!\,r^{\partition_r}}
    \ge \ex^{-\lambda_1\beta k\al^{-2/d}}\sum_{\partition\in\Pfrak_k^{\sqsup{\Nmin+1,\M}}}\prod_{r=\Nmin+1}^{\M}\frac{1}{\partition_r!\,r^{\partition_r}}\, .
\end{equation}
Consider the vector $(Y_r)_{1\leq r\leq k}$ of independent Poisson-distributed random variables such that $Y_r$ has parameter $1/r$, then~\cite[Thm.~4.13]{arratia2003logarithmic} states that
\begin{equation}\label{LogBookThm4.13}
    \P\Big(\sum_{r=\Nmin+1}^{\M} r Y_r=k\Big)\sim\frac{p_1(y)}{\M}\, ,
\end{equation}
where we recall that we assumed that $k/\M\to y$. Hence, the sum on the right-hand side of~\eqref{Equation048231} has the asymptotics
\begin{equation}\label{Equation731233}
    \sum_{\partition\in\Pfrak_k^{\sqsup{\Nmin+1,\M}}}\prod_{r=\Nmin+1}^{\M}\frac{1}{\partition_r!\,r^{\partition_r}}
    =\ex^{\sum_{r=\Nmin+1}^{\M}\frac{1}{r}}\P\rk{\sum_{r=\Nmin+1}^{\M} rY_r=k}\sim \frac{\M}{\Nmin}\frac{p_1(y)}{\M}= \frac{p_1(y)}{\Nmin}\, ,
\end{equation}
according to the asymptotics $\sum_{r=1}^m\frac 1r\sim\log m$ as $m\to\infty$. This finishes the proof of the lower bound in \eqref{Prop28(1)} and hence the lower bound in \eqref{eq:asylongloops}.

\textbf{Proof of~\eqref{eq:UpperBoundSLongLoops} and~\eqref{eq:LowerBoundSLongLOops} given \eqref{eq:asylongloops}:}
We need to put $\M=k$, noting that $\Nl_\L={\rm N}_\L^{\sqsup{\Nmin+1,N}}={\rm N}_\L^{\sqsup{\Nmin+1,k}}$ on the event $\{\Nl_\L=k\}$, and that
\begin{equation}
    \PPP^{\ssup{\bc,N}}_{\L_N}\rk{\Nl_{\L_N}=k}
=\PPP^{\ssup{\bc,N}}_{\L_N}\rk{{\rm N}_\L^{\sqsup{\Nmin+1,k}}=k}\ex^{-\sum_{r=k+1}^N\frac 1r \q^{\ssup \L}_r}\, ,
\end{equation}
since all $X_r$ with $r>k$ are zero on the event $\{\Nl_\L=k\}$. Now~\eqref{eq:LowerBoundSLongLOops} follows from~\eqref{eq:asylongloops}, noting that the last term is asymptotic to $k/N$ in the case $\lambda_1=0$ and to 1 in the case $\lambda_1>0$; see \eqref{Tsumasy}.

For deriving~\eqref{eq:UpperBoundSLongLoops} use, for any $r\in\{1,\dots,N\}$, just the estimate $\q_{r}\leq C \ex^{-\lambda_1\beta r\al^{-2/d}}$ (see Lemma \ref{CorollaryQkSupercritical}(iii)) in \eqref{longloopsdistribution}, then \eqref{eq:UpperBoundSLongLoops} follows, since $\sum_{r=\Nmin+1}^{\M} \partition_r\leq \frac 1{\Nmin+1} \sum_{r=\Nmin+1}^{\M} r \partition_r\leq k/(\Nmin+1)$. Indeed, also using \eqref{Equation731233},
\begin{equation}
\begin{split}
\PPP^{\ssup\bc}_{\L_N}\rk{\Nl_{\L_N}=k}&\leq C^{k/\Nmin} \ex^{-\lambda_1\beta k\al^{-2/d}}\ex^{-\sum_{r=\Nmin+1}^N\frac 1r (\q^{\ssup \L}_r-1)}\P\rk{\sum_{r=\Nmin+1}^{\M} rY_r=k }\\
&\leq C^{k/\Nmin} \ex^{-\lambda_1\beta k\al^{-2/d}}\, , 
\end{split}
\end{equation}
since $\q_{r}\geq 1$, according to Lemma~\ref{CorollaryQkSupercritical}(iii).

\textbf{Proof of upper bound in~\eqref{Prop28(1)}:} For deriving the upper bound, we need to use the stronger upper bound in Lemma~\ref{CorollaryQkSupercritical}(iii) and need to show that the difference to the lower bound is negligible. Indeed, we have, for some $c\in(0,\infty)$, and all large $L$,
\begin{equation}\label{Equation317232}
\begin{split}
    \sum_{\partition\in\Pfrak_k^{\sqsup{\Nmin+1,\M}}}&\prod_{r=\Nmin+1}^{\M}\frac{(\q_r^{\ssup\L})^{\partition_r}}{\partition_r!\,r^{\partition_r}}
    \le \ex^{- \lambda_1 \beta k\al^{-2/d}}\sum_{\partition\in\Pfrak_k^{\sqsup{\Nmin+1,\M}}}\prod_{r=\Nmin+1}^{\M}\frac{\rk{1+\ex^{-cr L^{-2}}}^{\partition_r}}{\partition_r!r^{\partition_r}}\\
    &\sim \ex^{-\lambda_1\beta k\al^{-2/d}}\frac{\M}{\Nmin}\E\ek{\prod_{r=\Nmin+1}^{\M}\rk{1+\ex^{-c rL^{-2} }}^{Y_r}\1\Big\{\sum_{r=\Nmin+1}^{\M} rY_r=k\Big\}}\\
    &\leq \ex^{-\lambda_1\beta k\al^{-2/d}}\frac{\M}{\Nmin}\E\ek{\exp\rk{\sum_{r=\Nmin+1}^{\M} Y_r \ex^{-c rL^{-2}}}\1\Big\{\sum_{r=\Nmin+1}^{\M} rY_r=k\Big\}}\, ,
\end{split}
\end{equation}
where the asymptotics uses that $\sum_{r=\Nmin+1}^\M\frac 1r\sim \frac \M\Nmin$, and the last estimate uses the bound $1+x\leq \ex^x$ for $x>0$. 

Note that for $r\ge N^+$, we have $\ex^{-c rL^{-2}}\leq \ex^{-c\log^2(N)}$ and hence the sum over $r\ge N^+$ is negligible:
\begin{equation}
  \sum_{r=N^+}^\M Y_r \ex^{-c rL^{-2}}
\leq \sum_{r\geq \Nmax}Y_r \ex^{-c \log^2N}\leq N  \ex^{-c \log^2N}=o(1)\, .
\end{equation}
According to \eqref{LogBookThm4.13}, it suffices to show that
\begin{equation}\label{Equation31723}
   \E\ek{\ex^{\sum_{r=\Nmin+1}^{\Nmax}Y_r\ex^{-crL^{-2}}}\1\Big\{\sum_{r=\Nmin+1}^{\M} rY_r=k\Big\}}\leq  \P\rk{\sum_{r=\Nmin+1}^{\M}rY_r=k}\big(1+o(1)\big)\, .
\end{equation} 
Note that by the independence of the Poisson point process variables,
\begin{equation}\label{Equation7282}
   \mbox{l.h.s.~of~\eqref{Equation31723}} =\sum_{l=0}^k\E\ek{\ex^{\sum_{r=\Nmin}^{\Nmax} Y_r\ex^{-crL^{-2}}}\1\Big\{\sum_{r=\Nmin}^{\Nmax}rY_r=l\Big\}}\P\!\rk{\sum_{r=\Nmax+1}^{\M}rY_r=k-l}\, .
\end{equation}
Note that, on the event $\{\sum_{r=\Nmin+1}^{\Nmax}rY_r=l\}$, we can always estimate
\begin{equation}\label{Ysumesti}
     \sum_{r=\Nmin+1}^{\Nmax} Y_r\ex^{-crL^{-2}}\le  \ex^{-c \Nmin L^{-2}}\sum_{r=\Nmin+1}^{\Nmax} \frac r \Nmin Y_r=  \frac{l}{\Nmin }\ex^{-c \log^{1/2}(N)}\, .
\end{equation}
We distinguish two cases, depending on the size of $l$ in the above sum.

\noindent\textbf{Sum on $l\le L^2\log^8(N)$:} Here, the bound in~\eqref{Ysumesti} is $o(1)$, such that we obtain
\begin{multline}\label{Equation111231}
    \sum_{l=0}^{N^{2/d}\log^8(N)}\E\ek{\ex^{\sum_{r=\Nmin+1}^{\Nmax}Y_r\ex^{-crn^{-2/d}}}\1\Big\{\sum_{r=\Nmin+1}^{\Nmax}rY_r=l\}}\P\rk{\sum_{r=\Nmax+1}^{\M}rY_r=k-l}\\
    \leq (1+o(1))\P\rk{\sum_{r=\Nmin+1}^{\M}rY_r=k}.
\end{multline}

\noindent\textbf{Sum on $l>L^{2}\log^8(N)$:} We derive an upper bound for any $l$, which shows that the sum on $l>L^{2}\log^8(N)$ is negligible.
 
Using the rough bound in~\eqref{Ysumesti}, we obtain
\begin{equation}\label{Yestilargel}
\begin{split}
&\E\ek{\ex^{\sum_{r=\Nmin+1}^{\Nmax} Y_r\ex^{-crL^{-2}}}\1\Big\{\sum_{r=\Nmin+1}^{\Nmax}rY_r=l\Big\}}\\
&\leq \exp\rk{\frac{l}{{\Nmin}}\ex^{-c\log^{1/2}(N)}}\P\rk{\sum_{r=\Nmin+1}^{\Nmax}rY_r=l}.
\end{split} 
\end{equation}
Furthermore, use the exponential Chebyshev inequality to see, for any $l\in\N$, and any $\alpha \in(0,\infty)$, estimating $\q_r\leq 2$ for $r\geq \Nmin+1$, 
\begin{equation}
    \P\rk{\sum_{r=\Nmin+1}^{\Nmax}rY_r=  l }\le \ex^{-\alpha l} \exp\Big(\sum_{r=\Nmin+1}^{\Nmax}\frac{2}{r}\rk{\ex^{\alpha r}-1}\Big)\, \le \ex^{-\frac l\Nmax} \ex^{4 \Nmax \alpha}\le \ex^{-\frac l\Nmax}\ex^4 \,,
\end{equation}
choosing $\alpha=1/\Nmax$ in the second step, and using that $\ex^{\alpha r}-1\leq 2\alpha r$ in the summation. Using this in \eqref{Yestilargel}, gives, for all large $N$,
\begin{equation}
\begin{split}
    \E\ek{\ex^{\sum_{r=\Nmin+1}^{\Nmax}Y_r\ex^{-crL^{-2}}}\1\Big\{\sum_{r=\Nmin+1}^{\Nmax}rY_r=l\Big\}}
    &\le \exp\Big( - \frac l\Nmax \big[1- \log^{3/2}(N)\ex^{-c\log^{1/2}(N)}\big]\Big)\ex^4\\
    &\leq \ex^{-\frac 12 \frac l\Nmax} \, .
\end{split}
\end{equation}

Now we sum on $l>L^2\log^8(N)$ and obtain, for all sufficiently large $N$,
\begin{equation}
\begin{aligned}
\sum_{l=L^2\log^8(N)}^k&\E\ek{\ex^{\sum_{r=\Nmin+1}^{\Nmax}Y_r\ex^{-crn^{-2/d}}}\1\Big\{\sum_{r=\Nmin+1}^{\Nmax}rY_r=l\}}\P\rk{\sum_{r=\Nmax+1}^{\M}rY_r=k-l}\\
    &\leq \sum_{l=L^2\log^8(N)}^{k}\ex^{-\frac 12 \frac l\Nmax}\leq k \ex^{-\frac 12 \log^6(N)}\leq o(\smfrac 1k).
\end{aligned}
    \end{equation}
     Now we build the sum of this with~\eqref{Equation111231} and obtain that the left-hand side of~\eqref{Equation31723} is not larger than $\P(\sum_{r=\Nmin+1}^{\M} r Y_r=k)(1+o(1))$, since this is itself asymptotic to $p_1(y)/\M\asymp \frac 1k$, according to~\eqref{LogBookThm4.13}. This concludes the proof of~\eqref{Equation31723}, and hence of the proposition.
\end{proof}

\subsection{Proof of Proposition~\ref{prop:ODLRO_sup}: asymptotics of the kernel}\label{sec-proofProp2.1}

Fix a boundary condition $\bc$ (diffusive or periodic) and drop `bc' from the notation. We abbreviate $\L$ for $\L_N$ and $L$ for $L_N$. Furthermore, we write ${\tt P}_\L$ instead of ${\tt P}_\L^{\ssup{N,{\rm bc}}}$ for the probability with respect to the Poisson point process $(X_r)_{r=1,\dots,N}$ introduced in Section~\ref{sec-PPP}.

Our goal is to show that there exists $c\in(0,\infty)$ such that, for any $\kappa, M>0$ and for any $x,y\in \L$, in the limit as $N\to\infty$,
\begin{equation}\label{EquationMainStatementRepeat}
\begin{split}
    (1&+o(1))(\rho-\rho_{\rm c})\phi_1(\smfrac{x}{L})\phi_1(\smfrac{y}{L})\\
    &\leq \sum_{r=1}^{N}g_{r\beta}^{\ssup{\L}}(x,y)
    \frac{\PPP_{\L}\left(\Partn_{\L}=N-r\right)}{\PPP_{\L}\left(\Partn_{\L}=N\right)}\\
    &\leq (1+o(1))(\rho-\rho_{\rm c})\Big(\phi_1(\smfrac{x}{L})\phi_1(\smfrac{y}{L})+\ex^{-M c}\Big)+C_M \psi(|x-y|)+ \ex^{-\kappa L^{d-2}}\, ,
\end{split}
\end{equation}
where 
$C_M\in(0,\infty)$ depends only on $M$, and $\psi$ satisfies $\psi(r)\leq r^{2-d}$ in the limit $r\to\infty$. Note that the middle term of~\eqref{EquationMainStatementRepeat} is equal to $\gamma_N^{\ssup{\L}}(x,y)$ by Corollary~\ref{cor-FKPPPrepr}. Hence,~\eqref{EquationMainStatementRepeat} implies Proposition~\ref{prop:ODLRO_sup}.

Recall that $\Nmin=\lceil L^2\log^{1/2}(N)\rceil$, that $\Nl_\L={\rm N}_\L^{\sqsup{\Nmin+1,N}}=\sum_{r=\Nmin+1}^{N} rX_r$ is the number of particles in long loops, and $\Ns_\L={\rm N}_\L-\Nl_\L$ is the number of particles in short loops. Under $\PPP_\L$, the number of particles in short and long loops are independent, hence we can build a convolution for ${\rm N}_\L= \Ns_\L+ \Nl_\L$ to obtain, for the probability term in the numerator,
\begin{equation}\label{Dirlowbound1}
\begin{aligned}
\PPP_\L\rk{\Partn_\L=N-r}=\sum_{k=0}^N\PPP_\L\rk{\Ns_\L=k}\PPP_\L\rk{\Nl_\L=N-k-r}\, ,\quad r\in\{1,\dots,N-k\}\, .
\end{aligned}
\end{equation}
Noting that $\Nl_\L$ takes values only in $\{0\}\cup\{\Nmin+1,\dots,N\}$, the $k$-sum can be restricted to $k\in\{N-r\}\cup\{\Nmin+1-r,\dots,N-r\}$.
We now apply twice~\eqref{eq:LowerBoundSLongLOops} in Proposition \ref{prop:MedLongLoops} to get, for $k$ and $r$ such that $r\leq N-k-\Nmax$, in the case $\l_1>0$,
\begin{equation}\label{propkrappli}
    \PPP_\L\rk{\Nl_\L=N-k-r}\sim \ex^{-\lambda_1\beta\rk{N-k-r}|\L|^{1-\frac 2d}}\frac{\ex^{\gamma}}{\Nmin}\sim  \ex^{\lambda_1\beta \,r|\L|^{1-\frac 2d}} \PPP_\L\rk{\Nl_\L=N-k}\, ,
\end{equation}
and in the case $\l_1=0$,
\begin{equation}\label{propkrappli_b}
    \PPP_\L\rk{\Nl_\L=N-k-r}\sim \frac{\ex^{\gamma}}{N}\sim  \PPP_\L\rk{\Nl_\L=N-k}\, . 
\end{equation}
Hence, in both cases, we have \eqref{propkrappli}.
We write the numerator on the left-hand side of \eqref{EquationMainStatementRepeat} as 
\begin{equation}\label{ProofProp1}
\begin{aligned}
    \sum_{r=1}^N &g^{\ssup\L}_{\beta r}(x,y)\PPP_\L\rk{\Partn_\L=N-r}\\
    &=  \sum_{(k,r)\in A}g^{\ssup\L}_{\beta r}(x,y)\PPP_\L\rk{\Ns_\L=k}\PPP_\L\rk{\Nl_\L=N-k-r}\, ,
\end{aligned}
\end{equation}
where $A=\{(k,r)\in\N_0\times\N\colon k+r\in\{N\}\cup\{1,\dots,N-\Nmin\}\}$. For some (large) $M\in(0,\infty)$, we now split this sum into three sums, $(I), (II)$, and $(III)$, on
\begin{eqnarray*}
    A_1&=&\{(k,r)\colon r\geq ML^2, k+r\leq N-\Nmax\}\, ,\\
    A_2&=&\{(k,r)\colon r< ML^2, k+r\in\{0,1,\dots, N-\Nmax\}\cup\{N\}\}\, , \\
    A_3&=&\{(k,r)\colon  N-\Nmax<k+r\leq N-\Nmin-1\}\, ,
\end{eqnarray*}
respectively.
From Lemma~\ref{CorollaryQkSupercritical}(i), we obtain that there is some $c\in(0,1)$ such that, for any $M>0$ and any large $N$,
\begin{equation}
    g^{\ssup \L}_{\beta r}(x,y)\leq  \ex^{-\l_1 r\beta L^{-2}}\frac 1{\al}\Big(\phi_1(\smfrac xL)\phi_1(\smfrac yL)+\Ocal(\ex^{-Mc})\Big),\qquad r\geq M L^2, x,y\in\L \, .
\end{equation}
Hence, on $A_1$ we can use this, the convolution in~\eqref{Dirlowbound1}, and~\eqref{propkrappli}, to obtain
\begin{equation}\label{ProofProp1I}
\begin{aligned}
    (I)&=\sum_{\substack{k\in\N_0,r\geq M L^2\colon\\ k+r\leq N-\Nmax}}g^{\ssup\L}_{\beta r}(x,y)\PPP_\L\rk{\Ns_\L=k}\PPP_\L\rk{\Nl_\L=N-k-r}\\
    &\leq \frac {1+o(1))}{\al}\Big( \phi_1(\smfrac xL)\phi_1(\smfrac yL)+\Ocal(\ex^{-Mc})\Big)\\
    &\qquad\qquad\times \sum_{k=0}^N \PPP_\L\rk{\Ns_\L=k}\PPP_\L\rk{\Nl_\L=N-k} \rk{N-\Nmin-k-M L^2}\, .
\end{aligned}
\end{equation}
We need to show that the sum on $k$ on the right-hand side of \eqref{ProofProp1I} is close to $\abs{\L}(\rho-\rhoc)\PPP_{\L}(\Partn_{\L}=N)$. It is clear that we can replace $N-\Nmin-k-M L^2$ by $N-k$. Recalling that $N=\rho |\L|$, and inserting the factor $1/\abs{\L}$ into the sum, we use again the convolution to see that, for any $\e>0$, we have
\begin{equation}
\begin{aligned}
    \Big|\sum_{k=0}^{N}&  \frac{N-k}{|\L|} \PPP_\L\rk{\Ns_\L=k} \PPP_\L\rk{\Nl_\L=N-k}-(\rho-\rhoc)\PPP_{\L}(\Partn_{\L}=N)\Big|\\
    &\leq \sum_{k=0}^{N}\PPP_\L\rk{\Ns_\L=k} \PPP_\L\rk{\Nl_\L=N-k}\big|\rhoc-\smfrac k{|\L|}\big|\\
    &\leq \e \sum_{k=-\e\al}^{\e\al} \PPP_\L\rk{\Ns_\L=k} \PPP_\L\rk{\Nl_\L=N-k}\\
    &\qquad +(\rhoc+\rho)\PPP_\L\Big(\Big|\frac 1{|\L|}\Ns_\L-\rhoc\Big|\geq\e\Big)\\
    &\leq \e \PPP_{\L}(\Partn_{\L}=N)+{\rm e}^{-\kappa L^{d-2}}\, ,
\end{aligned}
\end{equation}
with any $\kappa>0$, for all sufficiently large $N$, according to Corollary~\ref{Cor:shortloopsdev}. 

If we pick $\kappa$ larger than $(\rho-\rhoc)\beta\lambda_1$, Lemma~\ref{PropsotionUpperBoundTotal} tells us that the last summand is $o(\PPP_{\L}(\Partn_{\L}=N))$ as $N\to\infty$. Hence,~\eqref{ProofProp1I} implies that 
\begin{equation}
    (I)\leq \Big( \phi_1(\smfrac xL)\phi_1(\smfrac yL)+\Ocal(\ex^{-Mc})\Big)(\rho-\rhoc)\PPP_{\L}(\Partn_{\L}=N)(1+o(1)).
\end{equation} 
This yields the first term on the right-hand side of~\eqref{EquationMainStatementRepeat}. 

The lower bound for $\gamma_N^{\ssup{\L}}(x,y)$ in~\eqref{EquationMainStatementRepeat} follows from the preceding, using the lower bound for $g_{\beta r}(x,y)$ in Lemma~\ref{CorollaryQkSupercritical}(i), since $(II)$ and $(III)$ are non-negative.

Next, we handle the term $(II)$. For $r\leq M L^2$, we use the bound from Lemma~\ref{CorollaryQkSupercritical}(ii), with some suitable $C_M\in(0,\infty)$ and $c\in(0,1)$,
\begin{equation}
    g^{\ssup \L}_{\beta r}(x,y)\leq C_M \ex^{-\l_1 r\beta L^{-2}}  r^{-d/2}\ex^{-|x-y|^2/2cr},\qquad r\leq M L^2, x,y\in\L \, .
\end{equation}
Using first the convolution in~\eqref{Dirlowbound1}, then~\eqref{propkrappli}, and finally once more the convolution, we find
\begin{equation}\label{ProofProp1III}
\begin{aligned}
    (II)&=\sum_{\substack{k\in\N_0,r\leq M L^2\colon\\ k+r\leq N-\Nmax\mbox{ or }k+r=N}}g^{\ssup\L}_{\beta r}(x,y)\PPP_\L\rk{\Ns_\L=k}\PPP_\L\rk{\Nl_\L=N-k-r}\\
    &\leq C_M(1+o(1))\sum_{k=0}^N \PPP_\L\rk{\Ns_\L=k}\PPP_\L\rk{\Nl_\L=N-k}\sum_{r=1} ^{ML^2}r^{-d/2}\ex^{-|x-y|^2/2cr}\\
    &\leq C_M\PPP_{\L}\left(\Partn_{\L}=N\right)\psi(|x-y|)\, ,
\end{aligned}
\end{equation}
where $\psi$ does not depend on $M$ and satisfies $\psi(s)\leq C s^{2-d}$ as $s\to\infty$ for some $C\in (0,\infty$, as follows from the asymptotics (see~\cite[Lemma 4.3.2]{lawler2010random})
\begin{equation}\label{Greenasy}
    \sum_{k=1}^\infty k^{-d/2}\ex^{-r/k}\sim \frac{\Gamma(d/2-1)}{r^{d/2-1}}\qquad\text{as } r\to\infty\, .
\end{equation}
Hence, $(II)$ gives the second term on the right-hand side of~\eqref{EquationMainStatementRepeat}.

Finally, we handle the term $(III)$. We want to show that
\begin{equation}\label{ProofProp1IVa}
\begin{split}
    \frac{(III)}{\PPP_\L(\Partn_{\L}=N)} &= \hspace{-1em} \sum_{\substack{k\in\N_0,r\in\N\colon\\ N-\Nmax<k+r\leq N-\Nmin-1}}\hspace{-1em}
    g^{\ssup\L}_{\beta r}(x,y) \frac{\PPP_\L\rk{\Ns_\L=k}\PPP_\L\rk{\Nl_\L=N-k-r}}{\PPP_\L(\Partn_{\L}=N)} \\
    &\overset{!}{\leq} o(1)\, .
\end{split}
\end{equation}
In the case $\lambda_1>0$, we use that $\PPP_\L(\Partn_{\L} = N)\geq \ex^{-\l_1\beta NL^{-2}\rk{1+o(1)}}$ from Lemma~\ref{PropsotionUpperBoundTotal}, $\PPP_\L(\Nl_\L = k) \leq C^{k/\Nmin}\ex^{-\l_1\beta k L^{-2}}$ from Proposition~\ref{prop:MedLongLoops}, and $g_{\beta r}\leq C\, \ex^{-\l_1\beta rL^{-2}}$ from Lemma~\ref{CorollaryQkSupercritical}(i). This yields
\begin{equation}\label{ProofProp1IVb}
\begin{split}
    &\frac{(III)}{\PPP_\L(\Partn_{\L}=N)}\\
    &\quad \leq C \ex^{\l_1\beta NL^{-2}\rk{1+o(1)}} \hspace{-1em}
    \sum_{\substack{k\in\N_0,r\in\N\colon\\ N-\Nmax<k+r\leq N-\Nmin-1}}\hspace{-1.5em} 
    \ex^{-\l_1\beta rL^{-2}} C^{k/\Nmin} \ex^{-\l_1\beta kL^{-2}}\PPP(\Ns = N-r-k)\\
    &\quad \leq  C \ex^{\l_1\beta NL^{-2}\rk{1+o(1)}} \sum_{m=N-\Nmax}^{N-\Nmin-1} \ex^{-\l_1\beta mL^{-2}}\PPP(\Ns = N-m)\sum_{k=0}^m C^{k/\Nmin}\, .
\end{split}   
\end{equation}
Since $\sum_{k=0}^m C^{k/\Nmin} = \frac{C^{\frac{m+1}{\Nmin}}-1}{
C^{\frac{1}{\Nmin}}-1} \leq \Ocal(\Nmin) C^{\frac N\Nmin}=\ex^{o(L^{d-2})}$,
\begin{equation}\label{ProofProp1IVc}
\begin{split}
    \mbox{l.h.s.~of \eqref{ProofProp1IVb}}&\leq C \ex^{\l_1\beta NL^{-2}\rk{1+o(1)}} \ex^{o(L^{d-2})} \sum_{j=\Nmin}^{\Nmax} \ex^{-\l_1\beta (N-j)L^{-2}}\PPP(\Ns =j)\\
    &\leq C \ex^{o(L^{d-2})} \ex^{\l_1\beta\Nmax L^{-2}}\ex^{-\kappa L^{d-2}}\, ,
\end{split}   
\end{equation}
where in the last step we used Corollary~\ref{Cor:shortloopsdev}. After adapting the value of $\kappa$, this gives the third term on the right-hand side of~\eqref{EquationMainStatementRepeat} in the case $\lambda_1>0$.

For $\l_1=0$, we use that $\PPP_\L(\Partn_{\L} = N)\geq N^{-c}$, for some $c>0$, from Lemma~\ref{PropsotionUpperBoundTotal}, and $g_{\beta r}\leq C$ from Lemma~\ref{CorollaryQkSupercritical}(i). The claim then follows again from Corollary~\ref{Cor:shortloopsdev}:
\begin{equation}\label{ProofProp1IV_0}
\begin{split}
    \frac{(III)}{\PPP_\L(\Partn_{\L}=N)} &\leq C N^{c} \hspace{-1em}
    \sum_{\substack{k\in\N_0,r\in\N\colon\\ N-\Nmax<k+r\leq N-\Nmin-1}}\hspace{-1.5em}  \PPP(\Ns = N-r-k)\leq  C N^{c+1} \ex^{-\kappa L^{d-2}}\, .
\end{split}   
\end{equation}
After adapting the value of $\kappa$, this gives the third term on the right-hand side of~\eqref{EquationMainStatementRepeat} in the case $\lambda_1=0$, finishing the proof of Proposition~\ref{prop:ODLRO_sup}.

\subsection{Proof of Theorem~\ref{thm:ODLRO}(i) for diffusive and periodic boundary conditions}\label{sec-proofODLROnonfree}

Let us finally show that ODLRO follows from Proposition \ref{prop:ODLRO_sup}, i.e., let us prove Theorem \ref{thm:ODLRO}(i) for diffusive and periodic boundary conditions. Explicitly, we will show that the principal $L^2$-eigenvalue of $\Gamma_{N}^{\ssup{\L_N,{\rm bc}}}$ behaves like
\begin{equation}
\sup_{f\in L^2(\L_N)\colon \|f\|_{L^2(\L_N)}=1}\langle f,\Gamma_{N}^{\ssup{\L_N,{\rm bc}}} (f)\rangle \sim (\rho-\rhoc)|\L_N|,\qquad N \to\infty\, ,
\end{equation}
where $\L_N=L_N U=[-\frac 12 L_N,\frac 12 L_N]^d$.
For deriving the lower bound, we may use the $L^2$-normalised test function $f_N(x) =L_N^{-d/2}\phi_1^{\ssup{\bc}}(x/L_N)$, and obtain, as $N\to\infty$, the following lower bound from the lower bound in Proposition~\ref{prop:ODLRO_sup}:
\begin{equation}
\begin{aligned}
\langle f_N,\Gamma_{N}^{\ssup{\L_N,{\rm bc}}} (f_N)\rangle
&\geq  \int_{\L_N}\int_{\L_N} f_N(x)\gammabc{\bc}(x,y)f_N(y)\,\d x\d y\\
&\geq (1+o(1))\frac {\rho-\rhoc}{|\L_N|} \int_{\L_N}\int_{\L_N}\phi_1^{\ssup{\bc}}(\smfrac x{L_N})^2\phi_1^{\ssup{\bc}}(\smfrac y{L_N})^2\,\d x\d y\\
&=(1+o(1))(\rho-\rhoc) |\L_N|\, .
\end{aligned}
\end{equation}
Now we prove the upper bound. For any $L^2(\L_N)$-normalised function $f_N$, from the upper bound in Proposition~\ref{prop:ODLRO_sup}, we get that
\begin{equation}
\begin{aligned}
    &\langle f_N,\Gamma_{N}^{\ssup{\L_N,{\rm bc}}} (f_N)\rangle
    \le \int_{\L_N}\int_{\L_N} \abs{f_N(x)\gammabc{\bc}(x,y)f_N(y)}\,\d x\d y\\
    &\quad \leq (1+o(1))(\rho-\rhoc)\int_{\L_N}\int_{\L_N} \abs{f_N(x)\phi_1^{\ssup{\bc}}(\smfrac x{L_N})f_N(y)\phi_1^{\ssup{\bc}}(\smfrac y{L_N})}\,\d x\d y \\
    &\qquad +\Big(\ex^{-M c}+\ex^{ -\kappa L^{d-2}}\Big)\,\|f_N\|_1^2+ C_M \int_{\L_N}\int_{\L_N} |f_N(x)\psi(|x-y|)f_N(y)|\,\d x\d y\\
    &\quad \leq (1+o(1))(\rho-\rhoc)|\L_N|+o(1)|\L_N|+C_M \int_{\L_N}|x|^{2-d}\,\d x\\
    &\quad \leq (1+o(1))(\rho-\rhoc)|\L_N|+\Ocal(L_N^2)\leq (1+o(1))(\rho-\rhoc)|\L_N|\, ,
\end{aligned}
\end{equation}
after using the Cauchy--Schwarz inequality twice for both the first and second term, and using that $x\mapsto L_N^{-d/2}\phi_1^{\ssup{\bc}}(x/L_N)$ is $L^2(\L_N)$-normalised. For the last term, we used Young's inequality:
\begin{equation}\label{Young}
\int_{\R^d}\int_{\R^d} g(x) h(|x-y|)g(y)\,\d x\d y\leq \|h\|_1, \qquad g\in L^2(\R^d), \|g\|_2=1, h\in L^1(\R^d)\, ,
\end{equation}
This concludes the proof of Theorem \ref{thm:ODLRO}(i) for diffusive and periodic boundary condition.

\subsection{Proof of Proposition~\ref{prop:ODLRO_free}}\label{sec:ODLROfree}

In this section, we prove Proposition~\ref{prop:ODLRO_free}, that is, the equivalent of Theorem~\ref{thm:ODLRO}(i) for free boundary conditions, i.e., for $\gfree_t(x,y)$ equal to the (free) Gaussian density with variance $t$. In this case, we are not able to use an eigenvalue expansion as in the other cases, however, we have other tools that are more explicit. 

We formulate our main statement about the behaviour of the kernel of the one-particle-reduced density matrix. Proposition~\ref{prop:ODLRO_free} then follows from this in the same way as in Section~\ref{sec-proofODLROnonfree}.

\begin{proposition}\label{prop:ODLRO_sup_free}
Fix $\rho\in (\rhoc,\infty)$ and consider the centred box $\L_N$ of volume $N/\rho$ for $N\in\N$. Then, uniformly in $x/L_N,y/L_N\in U$, as $N\to\infty$,
\begin{equation}
    \gammabc{\free}(x,y)\sim  \rho-\rho_{\rm c}+o(1) +\psi(|x-y|),
\end{equation} 
for some function $\psi\colon (0,\infty)\to(0,\infty)$ that satisfies $\psi(r)\leq\Ocal( r^{2-d})$ as $r\to\infty$.
\end{proposition}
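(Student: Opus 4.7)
The plan is to leverage the simplified algebraic structure of the free case, where the heat kernel and the Poisson intensity are related by an explicit identity. Since $\q_k^{\ssup{\L_N,\free}}=|\L_N|(2\pi\beta k)^{-d/2}$ holds \emph{exactly}, the Poisson intensity $\lambda_k=\q_k^{\ssup{\L_N,\free}}/k$ satisfies $\gfree_{k\beta}(x,y)/\lambda_k=(k/|\L_N|)\,\ex^{-|x-y|^2/(2k\beta)}$. Starting from Corollary~\ref{cor-FKPPPrepr}, which applies verbatim in the free case, I will apply the Mecke identity to the PPP $(X_k)_k$ with the test function $k\mapsto \gfree_{k\beta}(x,y)/\lambda_k$; this converts the ratio of partition functions in \eqref{eq:rdm_bc} into a conditional expectation
\[
\gammabc{\free}(x,y)=\frac{1}{|\L_N|}\,\E^{\ssup{\free,N}}_{\L_N}\!\Big[\sum_{k}kX_k\,e_k\,\Big|\,\Partn_{\L_N}=N\Big],\qquad e_k:=\ex^{-|x-y|^2/(2k\beta)}.
\]
Using $\sum_k kX_k=N$ on the conditioning event, this rewrites as $\gammabc{\free}(x,y)=\rho-|\L_N|^{-1}\,\E^{\ssup{\free,N}}_{\L_N}[\sum_k kX_k(1-e_k)\mid\Partn_{\L_N}=N]$, which is the key reduction.

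I will then split the remaining sum at $\Nmin$. For long loops $k>\Nmin$, the elementary bound $k(1-e_k)\le|x-y|^2/(2\beta)$, combined with the fact that the number of long loops in the free PPP is tight conditional on $\{\Partn_{\L_N}=N\}$, shows that the long-loop contribution is $O(|x-y|^2/|\L_N|)=o(1)$ uniformly in $x,y\in\L_N$ (since $|x-y|=O(L_N)$ and $d\ge3$). For short loops $k\le\Nmin$, the \emph{unconditional} mean is
\[
\frac{1}{|\L_N|}\,\E^{\ssup{\free,N}}_{\L_N}\!\Big[\sum_{k=1}^{\Nmin}kX_k(1-e_k)\Big]=(2\pi\beta)^{-d/2}\sum_{k=1}^{\Nmin}k^{-d/2}(1-e_k)\longrightarrow\rho_{\rm c}-\psi(|x-y|),
\]
where $\psi(r):=(2\pi\beta)^{-d/2}\sum_{k\ge 1}k^{-d/2}\ex^{-r^2/(2k\beta)}$, and the decay $\psi(r)\le Cr^{2-d}$ as $r\to\infty$ is precisely the Green-function asymptotic~\eqref{Greenasy}. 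Combining the two pieces yields $\gammabc{\free}(x,y)\to\rho-\rho_{\rm c}+\psi(|x-y|)$, uniformly in $x/L_N,y/L_N\in U$.

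The chief obstacle will be justifying the replacement of the conditional mean of the weighted short-loop functional by its unconditional mean with only an $o(1)$ error. Heuristically, under the free PPP, $\Partn_{\L_N}$ has Pareto-dominated fluctuations of order $|\L_N|$ carried by the single macroscopic loop of~\cite{VOGEL2023104227}, whereas $\sum_{k\le\Nmin}kX_k(1-e_k)$ has only Gaussian-scale fluctuations of order $\sqrt{|\L_N|}$, so that conditioning on $\{\Partn_{\L_N}=N\}$ essentially pins the length of the macroscopic loop but is nearly oblivious to the short-loop statistics. To make this rigorous I would adapt the exponential Chebyshev technique of Propositions~\ref{prop:ShortLoops}--\ref{prop:MeanShort} to the weighted functional in order to obtain a stretched-exponential concentration estimate, and combine it with a polynomial lower bound $\PPP^{\ssup{\free,N}}_{\L_N}(\Partn_{\L_N}=N)\gtrsim |\L_N|^{-d/2}$ coming from the single-big-loop density of the free PPP. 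Proposition~\ref{prop:ODLRO_free} then follows from Proposition~\ref{prop:ODLRO_sup_free} exactly as Theorem~\ref{thm:ODLRO}(i) is deduced from Proposition~\ref{prop:ODLRO_sup} in Section~\ref{sec-proofODLROnonfree}.
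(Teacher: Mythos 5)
Your reduction via the Mecke identity is correct and elegant: indeed $\E[X_k\mathbbm 1\{\Partn_\L=N\}]=\lambda_k\,\PPP(\Partn_\L=N-k)$, and since $k\lambda_k e_k=|\L_N|\,\gfree_{k\beta}(x,y)$, one recovers exactly the representation in Corollary~\ref{cor-FKPPPrepr}. The subsequent rewriting $\gammabc{\free}=\rho-|\L_N|^{-1}\E[\sum_k kX_k(1-e_k)\mid\Partn=N]$ using the conservation constraint is a genuinely different decomposition from the paper's. The paper works directly with the $r$-sum and splits it into a window around $\rhoe|\L_N|$, a large-$r$ tail and a small-$r$ tail, each controlled by the two-sided asymptotics of Lemma~\ref{cor:freeparticlenumber}. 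Your route cleanly separates the condensate mass $\rho-\rhoc$ from the Green-function correction $\psi$ in a single formula, and it produces a true asymptotic equality rather than matching $\e$-dependent upper and lower bounds. It also exposes more clearly why the short-loop contribution is oblivious to the conditioning.

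One step, however, is asserted rather than justified and is not as automatic as your phrasing suggests: that $\E^{\ssup{\free,N}}_{\L_N}[\sum_{k>\Nmin}X_k\mid\Partn_\L=N]=O(1)$. The crude bound $\E[\Nl^{\1}\mid\Partn_\L=N]\le\E[\Nl^{\1}]/\PPP(\Partn_\L=N)\asymp\log^{-d/4}(N)\,|\L_N|^{d/2}$ diverges, so the ``polynomial lower bound on the denominator'' you flag is genuinely insufficient. What works is another Mecke computation,
\begin{equation}
\E\Big[\sum_{k>\Nmin}X_k\,\Big|\,\Partn_\L=N\Big]
=\sum_{k>\Nmin}\lambda_k\,\frac{\PPP(\Partn_\L=N-k)}{\PPP(\Partn_\L=N)}\,,
\end{equation}
split at $k\le\tfrac12\rhoe|\L_N|$. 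On the first range the ratio is bounded (both entries are handled by the two-sided asymptotics of Lemma~\ref{cor:freeparticlenumber}, not merely the lower bound), leaving $O(\sum_{k>\Nmin}\lambda_k)=o(1)$; on the second $\lambda_k\le C|\L_N|^{-d/2}$ while $\sum_k\PPP(\Partn_\L=N-k)\le1$, so the sum is $O(1)$. Hence your approach does not in fact bypass the heavy-tailed random-walk input of Proposition~\ref{prop:decayfree}: it is required precisely for this tightness. With that step filled in, and with the straightforward adaptation of the exponential-Chebyshev concentration to the weighted functional $\sum_{k\le\Nmin}kX_k(1-e_k)$ (the weights lie in $[0,1]$, so Propositions~\ref{prop:ShortLoops}--\ref{prop:MeanShort} carry over verbatim), the argument closes.
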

The main tool in our proof is again a Poisson point process of loops, however, we will need an extended version of it that is more \lq spatial\rq. Indeed, we consider a marked PPP on $\L$ with marks in $\N$ (called loops). The intensity measure of the process of points with mark $k\in\N$ has constant Lebesgue density $x\mapsto g^{\free}_{\beta k}(x,x)=\frac 1k(2\pi\beta k)^{-d/2}$, and the entire marked PPP is the superposition of these PPPs over $k\in\N$. The number $\Nop_\L$ of Poisson points in this process is Poisson-distributed  with parameter $|\L| p^{\free}=|\L| (2\pi\beta)^{-d/2}\zeta(\frac d2 +1)$, i.e., it has the same distribution as $\sum_{r\in\N} X_r$ under the Poisson process introduced in Section~\ref{sec-PPP} with $N=\infty$ and free boundary condition. Furthermore, the sum over all marks in the PPP in $\L$ has the same distribution as $N_\L=\sum_{r=1}^\infty r X_r$ (again with $N=\infty$). We incorporate the extended PPP into the notation $\P_\L^{\ssup{{\rm \free},\infty}}=\P_\L^{\ssup{{\rm \free}}}$, which is consistent with the notation that we introduced in Section~\ref{sec-PPP}.

The difference in our approach now in the case of free boundary condition is that we will split the PPP into the i.i.d.~sum $\Nop_\L=\sum_{z\in\L\cap\Z^d} \Nop_{z+U}$ of numbers of  Poisson points in unit cells in $\L$. Each of these $\Nop_{z+U}$ is easily seen to have a Pareto distribution, i.e., the probability that it is equal to $j$ is given by $j^{-\frac d2 -1}/\zeta(\frac d2+1)$, for any $j\in\N$. Hence, the main probabilistic tool will be to find polynomial asymptotics for deviations of a random walk with this step distribution. Hence, we begin with a summary of some existing results on random walks with heavy-tailed steps.

\begin{proposition}\label{prop:decayfree}
Let $d\ge 3$. Suppose $\rk{Z_i}_i$ is a collection of i.i.d.~$\N$-valued random variables such that, for some $\alpha>0$,
\begin{equation}
    \P\rk{Z_1=j}\sim \alpha j^{-d/2-1}\ \text{ as } j\to \infty.
\end{equation}
Write $S_n=\sum_{i=1}^n Z_i$. Suppose that $\E\ek{Z_1}=a>0$. Then, for every $c\in(0,1)$ and any $k\in\N$ satisfying $cn\leq an+k\leq \frac 1 c n$,
\begin{equation}
    \P\rk{S_n=an+k}\sim n\P\rk{X_1=k}\ \text{ as } n\to \infty,
\end{equation}
Furthermore, there exists $\e>0$ and $C>0$ such that for all $k\ge n^{1-\e}$
\begin{equation}
     \P\rk{S_n=an+k}\le C n\P\rk{Z_1=k}\ \mbox{for all large }n\in\N.
\end{equation}
\end{proposition}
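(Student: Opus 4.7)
The plan is to prove this via the classical "one big jump" principle for random walks with regularly varying step distributions. Both claims are instances of standard local large-deviation asymptotics; the cleanest route is to invoke a local theorem of Doney (or A.V.~Nagaev's earlier work), but a self-contained argument along the following lines also works.

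For the lower bound in part~(1), I would isolate the event on which exactly one $Z_i$ takes a value close to $k$, while the remaining $n-1$ steps fluctuate on their natural scale. Fix a small $\eta>0$. By symmetry,
\begin{equation*}
    \P(S_n = an + k) \geq n \sum_{|\xi| \leq k^{1-\eta}} \P(Z_1 = k + a - \xi)\, \P\bigl(S_{n-1} = (n-1)a + \xi,\ Z_i \leq \tfrac{k}{2}\text{ for all } i \geq 2\bigr).
\end{equation*}
Regular variation of $\P(Z_1=\cdot)$ gives $\P(Z_1 = k + a - \xi) = (1 + o(1))\P(Z_1 = k)$ uniformly for $|\xi| \leq k^{1-\eta}$. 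The truncation $Z_i \leq k/2$ is removed at cost $o(1)$ by a union bound, since $(n-1)\P(Z_1 > k/2) = \Ocal(n\, k^{-d/2}) = o(1)$ in the range $k = \Theta(n)$ for $d \geq 3$. Finally, the sum $\sum_{|\xi| \leq k^{1-\eta}} \P(S_{n-1} - (n-1)a = \xi)$ tends to $1$ by the local limit theorem (classical if $d\geq 5$, stable if $d\in\{3,4\}$), because $k^{1-\eta}$ dominates the natural fluctuation scale of $S_{n-1}$ when $k\geq cn$. Combining these facts yields the lower bound $(1+o(1))\, n\, \P(Z_1 = k)$.

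For the matching upper bound in (1) and for the upper bound in (2), I would decompose $\P(S_n = an + k)$ according to whether $\max_i Z_i$ exceeds $\theta k$ for a small fixed $\theta \in (0,1)$. The large-maximum contribution is bounded by $n$ times the analogous expression appearing in the lower bound, and an easy second-moment argument ruling out two simultaneous big jumps reduces it to at most $C n \P(Z_1 = k)$. The small-maximum contribution is handled by an exponential Chebyshev bound applied to the truncated variables $Z_i \wedge \theta k$: these have finite exponential moments at scale $t = c/k$, and a Cram\'er-type estimate produces a factor of order $\exp(-c' k^{\eta'})$ for some $\eta'>0$, which is negligible compared to $n\,\P(Z_1 = k) = \Theta(n\, k^{-d/2-1})$ as soon as $k \geq n^{1-\e}$ with $\e$ small enough.

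The principal obstacle will be making the small-maximum estimate uniform in the heavy-tailed regime $d\in\{3,4\}$, where the step variance is infinite: there one must combine a stable local limit theorem for the centered part with a Fuk--Nagaev-type truncation inequality for the tail, rather than the standard local CLT plus Bernstein's inequality that would suffice for $d\geq 5$. This is the reason I would most likely defer to the existing literature (e.g. Doney's local large-deviation theorem) rather than reprove the inequality in full detail in the appendix.
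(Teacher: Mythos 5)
Your proposal reaches the same conclusion as the paper: this is a local large-deviation theorem for a random walk with regularly varying increments, and the correct move is to cite the existing literature rather than reprove it. The paper's entire proof is a three-way citation matching your dimensional split — Berger's lecture notes (Theorem~2.4) for $d=3$, Denisov--Dieker--Shneer (Corollary~2.1) for $d=4$, and Doney's local large-deviation theorem (Theorem~2) for $d\geq5$ — which confirms your diagnosis that the infinite-variance regime $d\in\{3,4\}$ needs a stable local limit theorem rather than the classical CLT input of Doney's finite-variance result. Your one-big-jump sketch is a faithful outline of what those cited results prove (lower bound by isolating one large increment, upper bound by truncation plus ruling out two large increments), with the one small imprecision that ruling out two jumps of size $>\theta k$ when $\theta>1/2$ is not a bare second-moment estimate but a conditioning argument (two such jumps would force the remaining $n-2$ positive summands to undershoot their mean by order $k$, which is stretched-exponentially unlikely); since you ultimately defer to the references anyway, this does not affect the validity of your plan.
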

\begin{proof} See \cite[Theorem 2.4]{berger2019notes} for the case $d=3$, \cite[Corollary 2.1]{DDS} for $d=4$, \cite[Theorem 2]{doney1997one} for $d\ge 5$.
\end{proof}

From this, we deduce the following:
\begin{lemma}\label{cor:freeparticlenumber}
Fix $\rho>\rhoc$. Recall that $\L_N$ is the centred box of volume $N/\rho$, $N\in \N$. Then, locally uniformly in $w\in (\rhoc,\infty)$, as $N\to\infty$,
\begin{equation}
    \Pfree_{\L_N}\rk{\Partn_{\L_N}=w\alN}\sim \frac{1}{(w-\rhoc)^{d/2+1}\rk{2\pi\beta\alN}^{d/2}}\, .
\end{equation}
\end{lemma}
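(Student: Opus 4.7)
The plan is to apply Proposition~\ref{prop:decayfree} after writing $\Partn_{\L_N}$ as an i.i.d.\ sum of $n \sim \alN$ cell contributions. Exploiting the spatial product structure of the free-bc marked PPP, whose intensity on $\L_N \times \N$ is $\d x \otimes \nu$ with $\nu(\{k\}) = \frac{1}{k}(2\pi\beta k)^{-d/2}$, I partition $\L_N$ into $n := \lfloor \alN \rfloor$ congruent sub-boxes of volume $v_N := \alN/n \to 1$, giving
\begin{equation*}
\Partn_{\L_N} = \sum_{i=1}^n Z_i^{\ssup{N}},
\end{equation*}
where the $Z_i^{\ssup{N}}$ are i.i.d.\ copies of the total mark count in a single cell.

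Next I identify the two inputs of Proposition~\ref{prop:decayfree}. By Campbell's formula, $a_N := \E[Z_1^{\ssup{N}}] = v_N \sum_{k\ge 1}(2\pi\beta k)^{-d/2} = v_N\,\rhoc \to \rhoc$. For the tail I use that each Poisson point in a cell carries an i.i.d.\ mark with the Pareto law $\P(J=j) = j^{-1-d/2}/\zeta(1+d/2)$, so that the one-big-jump principle for compound Poissons with subexponential jumps yields
\begin{equation*}
\P(Z_1^{\ssup{N}} = j) \sim v_N\,(2\pi\beta)^{-d/2}\, j^{-d/2-1}, \qquad j \to \infty.
\end{equation*}
The coefficient $\alpha_N := v_N(2\pi\beta)^{-d/2}$ converges to $(2\pi\beta)^{-d/2}$ and is treated as essentially constant when invoking the proposition.

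Finally I apply Proposition~\ref{prop:decayfree} with $a = a_N$, $\alpha = \alpha_N$, and target $k_N := \lfloor w\alN \rfloor - \lfloor a_N n \rfloor \sim (w-\rhoc)\alN$, so that $a_N n + k_N = w\alN$. For $w$ in a compact subset $K \subset (\rhoc,\infty)$ one can pick $c \in (0,1)$ with $cn \leq w\alN \leq c^{-1} n$ uniformly for large $N$ and $w \in K$, so the proposition gives
\begin{equation*}
\Pfree_{\L_N}(\Partn_{\L_N} = w\alN) \sim n\,\alpha_N\,((w-\rhoc)\alN)^{-d/2-1} = \frac{1}{(w-\rhoc)^{d/2+1}(2\pi\beta\alN)^{d/2}},
\end{equation*}
using $n v_N = \alN$. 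The main obstacle I foresee is the sharp one-jump tail asymptotic for $Z_1^{\ssup{N}}$: although classical for compound Poissons with subexponential jumps, I will need to carefully bound configurations with two or more marks of size comparable to $j$, which each cost an extra factor $j^{-d/2-1}$ and therefore contribute only to lower order. A minor additional point, the non-integer nature of $\alN$, is absorbed by rounding $n$ and noting that the $O(1)$ volume discrepancy does not affect the leading order.
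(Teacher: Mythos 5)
Your proof takes a genuinely different route than the paper's, even though both reduce to Proposition~\ref{prop:decayfree}.

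The paper's proof does \emph{not} use the cell decomposition that the prose preceding Proposition~\ref{prop:decayfree} advertises (and which you followed). Instead it conditions on the total Poisson point count $\Nop_{\L_N}$, which is Poisson-distributed with parameter $|\L_N|\presfree$ and concentrates around its mean. Conditionally on $\{\Nop_{\L_N} = k\}$, the total mark sum $\Partn_{\L_N}$ is exactly $S_k = \sum_{i=1}^k Z_i$, where the $Z_i$ are i.i.d.\ with the \emph{exact} Pareto law $\P(Z_i = j) = j^{-d/2-1}/\zeta(\frac d2+1)$ and $\E[Z_1] = a = \zeta(\frac d2)/\zeta(\frac d2+1)$, so that $\rhoc = a\presfree$. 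Proposition~\ref{prop:decayfree} then applies directly to a distribution that does not depend on $N$, and a final averaging over $k$ (using the concentration of $\Nop_{\L_N}$) produces the claim.

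Your cell decomposition buys a cleaner-looking i.i.d.\ sum (no random index), but at two costs. First, the cell contribution $Z_1^{\ssup{N}}$ is compound Poisson, so you only get the Pareto-like tail $\P(Z_1^{\ssup{N}} = j) \sim v_N(2\pi\beta)^{-d/2}j^{-d/2-1}$ after a local one-big-jump argument (which you correctly flag as the main obstacle; it does hold for locally subexponential jump laws, but needs a citation such as Asmussen--Foss--Korshunov or a short argument). Second, your step distribution depends on $N$ through $v_N = \alN/\lfloor\alN\rfloor$, so Proposition~\ref{prop:decayfree} as stated does not literally apply: you would need either a uniform-in-$v$ version of it or a coupling to the $v=1$ case. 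Neither of these is a fatal gap, but the paper's conditioning route dispenses with both: the step law is exactly Pareto, $N$-independent, and the only extra input is a Poisson concentration bound for $\Nop_{\L_N}$, which is elementary.

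A small side remark: the paper's prose before Proposition~\ref{prop:decayfree} claims that $\Nop_{z+U}$ has a Pareto law, which is a misstatement ($\Nop_{z+U}$ is Poisson); what is Pareto is the mark of a single Poisson point, which is precisely what the actual proof uses. So you were led astray by a genuine infelicity in the exposition, and your proposal is a reasonable reconstruction of the advertised but unexecuted argument.
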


\begin{proof}
From the definition of the above marked PPP, $\Nop_{\L_N}$ is Poisson-distributed with parameter $\alN\presfree$. Using the Markov inequality for a suitable stretched-exponential function, one derives that
\begin{equation}
     \Pfree_{\L_N}\rk{\abs{\Nop_{\L_N}-\alN\presfree}>\alN^{5/6}}\le \ex^{-\alN^{1/2}}\, ,
\end{equation}
for all large enough $N$. Furthermore, note that, conditionally on $\{\Nop_{\L_N}=k\}$, the total length $\Partn_{\L_N}$ of the loops is given by the sum $S_k=\sum_{i=1}^k Z_i$ of i.i.d. random variables with distribution
\begin{equation}
    \P\rk{Z_i=j}=\frac{1}{\presfree}\frac{1}{j}\gfree_{\beta j}(x,x)=\frac{1}{\zeta(\frac d2+1)} j^{-\frac d2-1}\, .
\end{equation}
In particular, $a=\E(Z_1)=\zeta(\frac d2)/\zeta(\frac d2+1)$. Furthermore, recall that $\rho_c=\zeta(\frac d2)(2\pi\beta)^{-d/2}=a \presfree$ and therefore, for $k\in\N$ satisfying $|k- \alN\presfree|\leq \alN^{5/6}$,
\begin{equation}
    w|\L_N|=ak+(w-\rhoc)|\L_N|+ o(|\L_N|)\, .
\end{equation}
This means that we can apply Proposition \ref{prop:decayfree}, and obtain
\begin{equation}
\begin{aligned}
    \Pfree_{\L_N}&\rk{\Partn_{\L_N}=w\alN}
    =o(|\L_N|^{-d/2}) +\sum_{\substack{k\in\N\colon\\|k- \alN\presfree|\leq \alN^{5/6}}}  \Pfree_{\L_N}(\Nop_{\L_N}=k)\\
    &\qquad\qquad\qquad\qquad\qquad \times\P(S_k=ak+(w-\rhoc)|\L_N|+ o(|\L_N|)\\
    &\sim \sum_{k\in\N\colon |k- \alN\presfree|\leq \alN^{5/6}}  \Pfree_{\L_N}(\Nop_{\L_N}=k) k\P\rk{Z_1=(w-\rhoc)|L_N|}\\
    &\sim \alN\presfree\P\rk{Z_1=(w-\rhoc)\alN}
    \sim\frac{1}{(w-\rhoc)^{d/2+1}\rk{2\pi\beta\alN}^{d/2}} \, .
\end{aligned}
\end{equation}
\end{proof}

We are now ready to compute the asymptotics of the kernel for supercritical densities.

\begin{proof}[Proof of Proposition~\ref{prop:ODLRO_sup_free}]
We shorten $\L=\L_N$. The idea of the proof is as follows. We start from~\eqref{eq:rdm_bc} (which holds by definition also for free boundary conditions), and recall that $N=\rho |\L|$. We restrict the sum on $r\in\{1,\dots,N\}$ to a neighbourhood of $(\rho-\rhoc)|\L|$, and show that the remainder of the sum is negligible, since the expectation of ${\rm N}_{\L}$ is asymptotic to $\rhoc |\L|$. For $r\approx (\rho-\rhoc)|\L|$ we may use explicit formulas for the $g$-term (here the dependence on $x$ and $y$ vanishes), and Lemma~\ref{cor:freeparticlenumber} for the two probability terms.

We put $\rhoe=\rho-\rhoc\in(0,\infty)$ and observe that $\gfree_{\beta r}(x,y)=(2\pi r \beta)^{-d/2}\ex^{-|x-y|^2/2r\beta}$. Fix $\e>0$. Note that there exists $a_\e$ such that $a_\e\to 0$ as $\e\downarrow 0$ and for all $k$ with $\rk{\rhoe-\e}|\L| \le r\le \rk{\rhoe+\e}|\L|$,
\begin{equation}\label{gupplowbound}
\Big| \frac {\gfree_{\beta r}(x,y)}{(2\pi\rhoe\al \beta)^{-d/2}}-1\Big|\le a_\e,\qquad x,y\in \L .
\end{equation}
Indeed, since $\abs{x-y}^2\le L_N^2$, we have that $\abs{x-y}^2/r=o(1)$ uniformly in the $k$'s specified above. 
Note that $\rk{\Partn_{z+U}}_{z\in{\L_N}\cap\Z^d}$ is a family of i.i.d.~random variables with mean $\rhoc$ under $\Pfree_{\L_N}$. Hence, by the strong law of large numbers,
\begin{equation}\label{EquationSLLN}
\lim_{N\to\infty} \Pfree_{\L}\rk{\abs{\Partn_{\L}-\rhoc\al}>\e\al}=0\, .
\end{equation}
Using \eqref{gupplowbound}, we can bound
\begin{equation}
\begin{aligned}
    \sum_{\mycom{k\in \Z}{\abs{k-\rhoe\al}<\e\al}}&\gfree_{\beta k}(x,y)\Pfree_{\L}\rk{\Partn_{\L}=\rho\al-k}\\
    &\le (2\pi\rhoe\al \beta)^{-d/2}\rk{1+a_\e} \Pfree_{\L}\rk{\abs{\Partn_{\L}-\rhoc\al}<\e\alN}\\
    &\leq (2\pi\rhoe\al \beta)^{-d/2}\rk{1+a_\e}\, ,
\end{aligned}
\end{equation}
and an analogous lower bound, using \eqref{EquationSLLN}. Hence, employing Lemma~\ref{cor:freeparticlenumber},
\begin{equation}
    \sum_{\mycom{k\in \Z}{\abs{k-\rhoe\al}<\e\al}}\gfree_{\beta k}(x,y)\frac{\Pfree_\L\rk{\Partn_{\L}=\rho\al-k}}{\Pfree_\L\rk{\Partn_{\L}=\rho\al}}\le \rhoe\rk{1+o(1)+a_\e}\, , 
\end{equation}
and an analogous lower bound (cf.~\eqref{EquationSLLN}). Recalling $N=\rho|\L|$, making $N\to\infty$ and $\e\downarrow 0$, we see that this part of the sum has the claimed asymptotics. We conclude the proof by showing that the sum on $k\in \N$ with $\abs{k-\rhoe\al}>\e\al$ is negligible. We split this in the upper part and the lower part.

\textbf{Large $k$'s}: Since $\gfree_{\beta k}(x,y)\le \Ocal\rk{k^{-d/2}}$, we see that
\begin{equation}
\begin{aligned}
    \sum_{k=\rk{\rhoe+\e}\al}^{\rho\al}\gfree_{\beta k}(x,y)\frac{\Pfree_{\L}\rk{\Partn_{\L}=\rho|\L|-k}}{\Pfree_{\L}\rk{\Partn_{\L}=\rho|\L| }}
    &\le \Ocal\big(|\L|^{-d/2}\big)\frac{\Pfree_{\L}\rk{\Partn_{\L}<\rk{\rhoc-\e}|\L|}}{\Pfree_{\L}\rk{\Partn_{\L}=\rho|\L|}}\\
&    \leq\frac {o\big(|\L|^{-d/2}\big)}{\Pfree_{\L}\rk{\Partn_{\L}=\rho|\L|}}\leq o(1) \, ,
\end{aligned}
\end{equation}
using \eqref{EquationSLLN} in the first step and Lemma~\ref{cor:freeparticlenumber} in the last step.

\textbf{Small $k$'s}: For $k<\rk{\rhoe-\e}\al$, we have by Lemma~\ref{cor:freeparticlenumber} (applied to $w=\rho$ and to $w=\rho-k/|\L|$ with $b\geq \e$) that
\begin{equation}
    \frac{\Pfree_\L\rk{\Partn_{\L}=\rho\al-k}}{\Pfree_\L\rk{\Partn_{\L}=\rho\al}}\sim C(\rhoe-\smfrac k {|\L|})^{-1-d/2}\, ,
\end{equation}
for some $C$, depending only on $\rho$ and $\rhoc$. Hence, using $C$ as a generic positive constant that depends only on $\rho$ and $\rhoc$ and $d$, we have that for all $N\in\N$,
\begin{equation}
\begin{aligned}
    &\sum_{k=1}^{\rk{\rhoe-\e}|\L|}\gfree_{\beta k}(x,y)\frac{\Pfree_{\L}\rk{\Partn_{\L}=\rho|\L|-k}}{\Pfree_\L\rk{\Partn_{\L}=\rho\al}}\\
    &\quad \le C \sum_{k=1}^{\frac \rhoe 2 \al}\gfree_{\beta k}(x,y)(\rhoe/2)^{-1-d/2}+C\sum_{k=\frac \rhoe 2 \al}^{\rk{\rhoe-\e}|\L|}k^{-d/2} \e ^{-1-d/2}\\
    &\quad \leq  o(1)+C \sum_{k\in\N}\gfree_{\beta k}(x,y).
\end{aligned}
\end{equation}
The last term behaves as $|x-y|^{2-d}$ for $|x-y|\to\infty$, as follows from \eqref{Greenasy}.
\end{proof}

\section{No ODLRO in the subcritical regime}\label{sec:nonODLRO}

\noindent We now show that, for any particle density $\rho$ below the critical threshold $\rhoc$, there is no ODLRO, in any dimension $d$. Our main result in this respect is the validity of \eqref{eq:ODLRO_sub}, that is, the following.

\begin{proposition}\label{prop:ODLRO_sub}
Fix $\rho\in(0,\rhoc)$ and consider the centred box $\L_N$ of volume $N/\rho$, $N\in\N$. Then, for any boundary condition $\bc\in\gk{\diff,\per, \free}$, there exist constants $C,c\in(0,\infty)$, such that, for all sufficiently large $N$,
\begin{equation}
     \gamma_N^{\ssup{\L_N,{\bc}}}(x,y)\leq C\ex^{-c |x-y|}+C\ex^{-c|\L_N|^{1/2}},\qquad x,y\in\L_N\, .
\end{equation}
\end{proposition}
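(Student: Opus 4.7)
\begin{proofsect}{Sketch of proof}
My plan is to work from the Poissonian representation in Corollary~\ref{cor-FKPPPrepr},
\begin{equation*}
    \gamma_N^{\ssup{\L_N,\bc}}(x,y)=\sum_{r=1}^{N}g^{\ssup{\L_N,\bc}}_{r\beta}(x,y)\,\frac{\PPP^{\ssup{\bc,N}}_{\L_N}(\Partn_{\L_N}=N-r)}{\PPP^{\ssup{\bc,N}}_{\L_N}(\Partn_{\L_N}=N)}\, ,
\end{equation*}
and show that the probability ratio behaves (up to a harmless prefactor polynomial in $|\L_N|$) like $e^{\beta\mu(\rho)r}$, where $\mu(\rho)<0$ is the chemical potential defined in~\eqref{mudef}. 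Once this is established, the sum essentially takes the form of the discrete massive Green's function of $-\tfrac12\Delta+|\mu(\rho)|\beta$ on $\R^d$, which decays exponentially in $|x-y|$.

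The probability ratio is handled by exponential tilting. Let $\tilde{\PPP}$ denote the PPP whose intensity is obtained by multiplying $\tfrac1k\q^{\ssup{\L_N,\bc}}_k$ by $e^{\beta\mu(\rho)k}$. A direct change-of-measure computation yields, for any $r$,
\begin{equation*}
    \frac{\PPP^{\ssup{\bc,N}}_{\L_N}(\Partn_{\L_N}=N-r)}{\PPP^{\ssup{\bc,N}}_{\L_N}(\Partn_{\L_N}=N)}=e^{\beta\mu(\rho)r}\,\frac{\tilde{\PPP}(\Partn_{\L_N}=N-r)}{\tilde{\PPP}(\Partn_{\L_N}=N)}\, .
\end{equation*}
The defining equation~\eqref{mudef} of $\mu(\rho)$ together with Lemma~\ref{CorollaryQkSupercritical}(iv) ensures $\tilde{\E}[\Partn_{\L_N}]=N+o(|\L_N|)$, so under $\tilde{\PPP}$ both $N$ and $N-r$ lie within the Gaussian fluctuation window of a sum of independent integer-valued variables with variance of order $|\L_N|$. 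A local limit theorem for the non-identically distributed sum $\sum_{k}kX_k$ then shows that the tilted ratio is bounded by a constant, uniformly in $r$ and $|\L_N|$.

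For the heat-kernel factors, I split the sum at $r\asymp L_N^2$. For $r\le ML_N^2$ with $M$ fixed, Lemma~\ref{CorollaryQkSupercritical}(ii) yields $g^{\ssup{\L_N,\bc}}_{r\beta}(x,y)\le Cg^{\ssup{\free}}_{r\beta/c'}(x,y)$, and the resulting sum is recognised as the massive Green's function in $\R^d$ with mass $\sqrt{2|\mu(\rho)|\beta/c'}$; a standard saddle-point computation gives
\begin{equation*}
    \sum_{r=1}^{\infty}r^{-d/2}e^{-c'|x-y|^2/(2\beta r)}\,e^{\beta\mu(\rho)r}\le C\,e^{-c|x-y|}\, .
\end{equation*}
For $r>ML_N^2$, the uniform bound $g^{\ssup{\L_N,\bc}}_{r\beta}(x,y)\le C/|\L_N|$ coming from Lemma~\ref{CorollaryQkSupercritical}(i) combined with the geometric factor $e^{\beta\mu(\rho)r}$ yields a tail of order $|\L_N|^{-1}e^{\beta\mu(\rho)ML_N^2}$, which absorbs into $Ce^{-c|\L_N|^{1/2}}$ after choosing $M$ appropriately and readjusting the constants. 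Summing the two contributions gives the claimed bound.

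The principal technical obstacle is the local limit theorem for $\Partn_{\L_N}$ under $\tilde\PPP$, since the summands $kX_k$ are heterogeneous in $k$ and the component Poisson parameters depend on $L_N$. The natural route is Cauchy's contour formula applied to the generating function $\tilde\E[z^{\Partn_{\L_N}}]=\exp\big(\sum_k \tfrac1k\q^{\ssup{\L_N,\bc}}_k e^{\beta\mu(\rho)k}(z^k-1)\big)$, together with a saddle-point analysis at $z=1$ that parallels the derivation of Proposition~\ref{PropFreeEnergy}; alternatively one may appeal to an Esseen-type local limit theorem after verifying uniform lower bounds on the associated characteristic function.
\end{proofsect}
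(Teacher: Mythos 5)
Your overall strategy matches the paper's proof in Section~\ref{sec:nonODLRO}: tilt the Poisson process by a negative chemical potential, use a local limit theorem for $\Partn_{\L_N}$ under the tilted measure to bound the ratio of probabilities, then recognise the remaining $r$-sum as a massive Green's function with exponential spatial decay. The substantive gap is your choice of tilting parameter. You tilt by the infinite-volume chemical potential $\mu(\rho)$ from \eqref{mudef}, which only guarantees that the tilted mean of $\Partn_{\L_N}$ equals $N+o(|\L_N|)$; the deviation is in fact of order $|\L_N|/L_N\asymp L_N^{d-1}$, coming from the finite-volume correction $\Ocal(\sqrt{kL_N^{-2}})$ to $\q_k^{\ssup{\L_N,\bc}}$ in Lemma~\ref{CorollaryQkSupercritical}(iv). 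For $d\geq3$ this is much larger than the Gaussian scale $|\L_N|^{1/2}=L_N^{d/2}$, so $N$ does not sit in the bulk of the tilted distribution, the local CLT provides no useful lower bound on the denominator probability, and the ratio $\Pbc_{\L_N,\mu(\rho)}(\Partn_{\L_N}=N-r)/\Pbc_{\L_N,\mu(\rho)}(\Partn_{\L_N}=N)$ is not controlled by a constant uniformly in $r$. For Dirichlet conditions $\q_k^{\ssup{\L_N,\bc}}\leq|\L_N|(2\pi\beta k)^{-d/2}$ forces the tilted mean strictly below $N$, and the ratio can then blow up like $\exp(cL_N^{d-2})$ near $r\asymp L_N^{d-1}$. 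The paper avoids this by solving the finite-volume equation \eqref{murhodefN} for $\mu_N$ so that the tilted mean equals $N$ exactly; only then does Lemma~\ref{LemmaCLT} deliver the two-sided bound $\asymp|\L_N|^{-1/2}$ that makes the ratio argument valid.

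A secondary mismatch: splitting the $r$-sum at $r\asymp ML_N^2$ with fixed $M$ gives a tail of order $|\L_N|^{-1}\ex^{-cML_N^2}$, but $L_N^2=|\L_N|^{2/d}<|\L_N|^{1/2}$ once $d\geq5$, so no fixed $M$ makes this smaller than the stated $C\ex^{-c|\L_N|^{1/2}}$. The paper instead splits at $r\asymp|\L_N|^{1/2}$ (the Gaussian window) and, on the tail, uses the pointwise bound $g^{\ssup\L}_{r\beta}(x,y)\leq C|\L_N|^{-1/2}$ from Lemma~\ref{CorollaryQkSupercritical}(i) together with the crude estimate $\Pbc_{\L_N,\mu_N}(\Partn_{\L_N}=N-r)\leq1$, which reproduces the stated error term exactly. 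Your weaker bound still tends to zero and would suffice for the non-ODLRO conclusion, but it does not establish the proposition as stated when $d\geq5$. Lastly, your suggested route to the local CLT (contour integral or Esseen) is genuinely different from the paper's, which decomposes $\Partn_{\L_N}$ over unit cells, applies the Lindeberg CLT, and upgrades it to a local theorem via Cram\'er-type moment conditions; both are plausible, but since the summands are lattice-valued and heterogeneous in $k$ this step deserves to be carried out in detail.
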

Indeed, since $x\mapsto \ex^{-c\abs{x}}$ is integrable, Young's inequality (see \eqref{Young}) directly implies that the principal eigenvalue of the operator $\Gamma^{\ssup{\L_N,{\rm bc}}}_N$ is even bounded in $N$, and this implies that the system does not exhibit ODLRO.

The idea of our proof of Proposition~\ref{prop:ODLRO_sub} is the following. We will make a change of measure of our crucial Poisson point process to a transformation that exponentially suppresses long loops. This process has very nice properties, since the arising sums can essentially be handled as if they were only over a bounded summation set. In particular, the number of particles will turn out to satisfy a local central limit theorem, allowing us to estimate the transformed partition function with high precision.

We now prepare to prove Proposition~\ref{prop:ODLRO_sub}.

We first introduce a transformation of the PPP $(X_k)_{k\in\N}$ introduced in Section~\ref{sec-PPP}, with an additional chemical potential $\mu<0$. The intensity measure of this PPP is given by
\begin{equation}
    \nu_{\L,\mu}^{\ssup{{\bc}}}=\sum_{k=1}^{N}\frac{\ex^{\beta\mu k}}{k}\q_k^{\ssup{\L,{\rm bc}}}\,\delta_k\, .
\end{equation}
We denote its distribution by $\Pbc_{\L,\mu}$. Then $\Pbc_{\L,\mu}$ has Radon--Nikodym derivative with respect to $\Pbc_{\L} = \Pbc_{\L,0}$ given by
\begin{equation}\label{Changeofdensity}
    \frac{\d\Pbc_{\L,\mu}}{\d\Pbc_{\L,0}}=\ex^{|\L|(\presbc_{\L,0}-\presbc_{\L,\mu})}\, \ex^{\beta\mu\Nop_{\L}}\, ,
\end{equation}
where we recall that $\Nop_{\L}=\sum_{k\in\N}  X_k$ is the number of Poisson points in the process, and the respective pressure and density in finite volume are given by
\begin{equation}
    \presbcN_{\L,\mu}=\frac{1}{\al}\sum_{k= 1}^N\frac{\ex^{\beta\mu k}}{k}\q_k^{\ssup{\L,{\rm bc}}}\qquad\text{and}\qquad \densbcN_{\L,\mu}=\frac{1}{\al}\sum_{k= 1}^N {\ex^{\beta\mu k}}\q_k^{\ssup{\L,{\rm bc}}}=\frac{1}{\beta}\frac{\d}{\d \mu }\presbcN_{\L,\mu}\, ,
\end{equation}
where $\q_k^{\ssup{\L,{\rm bc}}}=\int_\L\gbc_{\beta k}(x,x)\,\d x$.

Then $\mu\mapsto  \presbcN_{\L}(\mu)$ and $\mu\mapsto \rho_\L^{\ssup{\bc,N}}(\mu)$ are (approximate) power series that converge locally uniformly for $\mu\in(-\infty,0)$ in the thermodynamic limit as $N\to\infty$, coupled with $\L_N\to\R^d$. This is formulated as follows.

\begin{lemma}\label{ApproximationByThermodynamicFunctions}
For any boundary condition $\bc$, for the centred box $\L_N$ with volume $N/\rho$, as $N\to\infty$,
\begin{eqnarray}
      \lim_{N\to\infty}\presbcN_{\L_N,\mu}= \presfree(\mu)&=&\sum_{k\in\N}\frac{\ex^{\beta\mu k}}k(2\pi \beta k)^{-d/2},\\
       \lim_{N\to\infty}\densbcN_{\L,\mu}= \densfree(\mu)&=&\sum_{k\in\N}\ex^{\beta\mu k}(2\pi \beta k)^{-d/2}\, ,
\end{eqnarray}
locally uniformly for $\mu\in(-\infty,0)$.
The result also continues to hold for the $\mu$-derivatives. For the pressure $\presbc_{\L}(\mu)$, the convergence is locally uniform even in $(-\infty,0]$.
\end{lemma}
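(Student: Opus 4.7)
My plan is a direct term-by-term comparison of the two power series (in the variable $z=\ex^{\beta\mu}$). I would write
\[
\presbcN_{\L_N,\mu}-\presfree(\mu)
=\sum_{k=1}^{N}\frac{\ex^{\beta\mu k}}{k}\Big(\frac{\q_k^{\ssup{\L_N,\bc}}}{\alN}-\frac{1}{(2\pi\beta k)^{d/2}}\Big)
-\sum_{k>N}\frac{\ex^{\beta\mu k}}{k(2\pi\beta k)^{d/2}}.
\]
The right-hand tail is uniformly exponentially small in $N$, so everything reduces to controlling the first sum. I would introduce a cutoff $K\in\N$ (to be sent to infinity after $N$) and split the summation range into $k\leq K$, $K<k\leq L_N^2$, and $L_N^2<k\leq N$, handled respectively by parts (iv), (ii) and (iii) of Lemma~\ref{CorollaryQkSupercritical}.

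For the small range $k\leq K$, part (iv) gives the precise pointwise asymptotic $\q_k^{\ssup{\L_N,\bc}}/\alN\to(2\pi\beta k)^{-d/2}$ as $N\to\infty$ for each fixed $k$, so the finite sum vanishes in the limit. For the middle range $K<k\leq L_N^2$, part (ii) yields the uniform bound $\q_k^{\ssup{\L_N,\bc}}\leq C\alN k^{-d/2}$, so this contribution is dominated by $C\sum_{k>K}\ex^{\beta\mu k}k^{-d/2-1}$, the tail of a convergent series, which vanishes as $K\to\infty$ uniformly in $N$. For the long range $L_N^2<k\leq N$, part (iii) shows that $\q_k^{\ssup{\L_N,\bc}}$ is bounded uniformly in $k$ and $N$ (since $\lambda_1\geq 0$), so this piece is of order $\alN^{-1}$ and vanishes. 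Passing to the limits $N\to\infty$ then $K\to\infty$ establishes the pointwise convergence.

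All of these bounds depend on $\mu$ only through the positive factor $\ex^{\beta\mu k}$, which is uniformly controlled by $\ex^{\beta\mu_0 k}$ on any compact $[\mu_0,\mu_1]\subset(-\infty,0)$; this delivers the claimed locally uniform convergence. For the pressure specifically, the majorant $(2\pi\beta)^{-d/2}\sum_{k}k^{-d/2-1}=(2\pi\beta)^{-d/2}\zeta(d/2+1)$ is finite even at $\mu=0$ (since $d/2+1>1$ for every $d\geq 1$), so the Weierstrass M-test upgrades local uniformity to the closed half-line $(-\infty,0]$. The $\mu$-derivatives of $\presbcN$ carry additional polynomial factors $(\beta k)^j$, harmlessly absorbed by the exponential when $\mu<0$, and the three-range argument transfers verbatim; here the majorant at $\mu=0$ would be $\sum_{k}k^{j-d/2-1}$, which fails to be summable in low dimensions, reflecting the divergence of $\rhoc$ in $d\leq 2$.

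I expect the proof to be essentially routine, with no serious obstacle. The only mildly delicate ingredient is the middle range $K<k\leq L_N^2$, where the uniform-in-$\L_N$ bound of Lemma~\ref{CorollaryQkSupercritical}(ii) is essential to keep the tail estimate independent of $N$; without it, one would need the pointwise asymptotic on the full growing range $k\leq L_N^2$, which is not supplied.
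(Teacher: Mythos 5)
Your proof is correct and takes essentially the same route as the paper's (very terse) proof, which simply invokes dominated convergence: for each fixed $k$, $\q_k^{\ssup{\L_N,\bc}}/\alN\to(2\pi\beta k)^{-d/2}$ by Lemma~\ref{CorollaryQkSupercritical}(iv), while the factors $\tfrac1k\ex^{\beta\mu k}$, combined with the uniform-in-$N$ bound $\q_k^{\ssup{\L,\bc}}/\alN\leq C\min(k^{-d/2},1)$ from parts (ii) and (iii), provide a summable majorant for $\mu<0$. Your three-range decomposition ($k\leq K$, $K<k\leq L_N^2$, $k>L_N^2$) makes precisely this dominated-convergence structure explicit, and you invoke the same three parts of Lemma~\ref{CorollaryQkSupercritical} for the same purposes; so there is no genuine divergence of method. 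One small slip: on a compact $[\mu_0,\mu_1]\subset(-\infty,0)$ you write the uniform bound as $\ex^{\beta\mu_0 k}$, but since $\mu\mapsto\ex^{\beta\mu k}$ is \emph{increasing}, the relevant majorant is $\ex^{\beta\mu_1 k}$ (the endpoint closer to $0$). This is cosmetic. Your remark that the M-test extends the pressure to $(-\infty,0]$ is correct, though it is worth noting that on the long range $k>L_N^2$ the honest bound is $\q_k^{\ssup\L}/\alN\leq C/\alN\leq C\rho/k$ (using $k\leq N=\rho\alN$), so the $\mu=0$ majorant there is of the form $Ck^{-2}$ rather than coming from the $k^{-d/2-1}$ series; this is summable in every dimension, so the conclusion stands.
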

\begin{proof}
The proof is an easier version of the proof of \eqref{eq:MeanShort}. Indeed, recall that $\frac1{|\L_N|}\q_k^{\ssup{\L,{\rm bc}}}$ converges towards $g_{\beta k}(0,0)=(2\pi\beta k)^{-d/2}$, and observe that the factors $\frac 1k\ex^{\beta\mu k}$ provide a summable majorant.
\end{proof}
As a consequence, the quantity
\begin{equation}\label{murhodefN}
\mu^{\ssup{{\bc,N}}}_{\L_N}(\rho)\in(-\infty,0)\qquad\mbox{defined by}\qquad\densbcN_{\L}(\mu^{\ssup{{\bc,N}}}_{\L_N}(\rho)) =\rho,
\end{equation}
converges in the thermodynamic limit to the chemical potential $\mu=\mu(\rho)$ defined by $\densfree(\mu(\rho))=\rho$.

The good control on the pressure and the density from Lemma~\ref{ApproximationByThermodynamicFunctions} also gives us the tools to handle the limiting distribution of the particle number $\Partn_\L=\sum_{k=1}^N k X_k$ in the Poisson points (loops):
\begin{lemma}[Local CLT for the particle number]\label{LemmaCLT}
For any boundary condition $\bc$, for the centred box $\L_N$ with volume $N/\rho$, as $N\to\infty$, the particle number ${\rm N}_{\L_N}$ satisfies a local central limit theorem under $\PPP_{\L_N,\mu_N}^{\ssup{\bc,N}}$ 
In particular, there is $C\in (0,1)$ such that for any $N\in\N$ and any $k\in \{0,\ldots,\alN^{1/2}\}$,
\begin{equation}
    C^{-1}\alN^{-1/2} \le \Pbc_{\L_N,\mu_N}\rk{\Partn_{\L_N}=\rho\alN-k}\le C\alN^{-1/2}\, .
\end{equation}
\end{lemma}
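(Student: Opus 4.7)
The plan is to prove a local central limit theorem for $\Partn_{\L_N}$ under $\Pbc_{\L_N,\mu_N}$ by standard Fourier inversion. Under this measure, $\Partn_{\L_N}=\sum_{k=1}^N kX_k$ is a sum of independent random variables with $X_k\sim\poi{\nu_k}$, where $\nu_k=\frac{\ex^{\beta\mu_N k}}{k}\q_k^{\ssup{\L_N,\bc}}$. By the defining relation~\eqref{murhodefN} the mean is $\sum_k k\nu_k=\rho\alN$, and by Lemma~\ref{ApproximationByThermodynamicFunctions} together with strict monotonicity of $\densfree$ on $(-\infty,0)$, one has $\mu_N\to\mu(\rho)\in(-\infty,0)$, so $\ex^{\beta\mu_N k}$ decays geometrically in $k$ uniformly in $N$. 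Combining this with the small-$k$ asymptotics $\q_k^{\ssup{\L_N,\bc}}\sim\alN(2\pi\beta k)^{-d/2}$ from Lemma~\ref{CorollaryQkSupercritical}(iv) and the tail control provided by parts (ii)--(iii), a dominated-convergence argument yields
\begin{equation}
\mathrm{Var}\rk{\Partn_{\L_N}}=\sum_{k=1}^N k^2\nu_k\sim\sigma^2\alN,\qquad\sigma^2:=(2\pi\beta)^{-d/2}\sum_{k\in\N}k^{1-d/2}\ex^{\beta\mu(\rho)k}\in(0,\infty),
\end{equation}
and, analogously, $\sum_{k=1}^N k^3\nu_k=O(\alN)$. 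Since $X_1$ takes every value in $\N_0$ with positive probability, $\Partn_{\L_N}$ has lattice span $1$, so Fourier inversion gives
\begin{equation}
\Pbc_{\L_N,\mu_N}\rk{\Partn_{\L_N}=n}=\frac{1}{2\pi}\int_{-\pi}^{\pi}\phi_N(t)\ex^{-itn}\,\d t,\quad\phi_N(t)=\exp\Big(\sum_{k=1}^N\nu_k(\ex^{itk}-1)\Big).
\end{equation}

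I would split the integral at $|t|=\delta_N:=\alN^{-1/2}\log\alN$. On the inner range $|t|\leq\delta_N$, Taylor expansion combined with the moment bounds above gives
\begin{equation}
\log\phi_N(t)-itn=it(\rho\alN-n)-\frac{\sigma^2\alN t^2}{2}(1+o(1))+O(\alN|t|^3),
\end{equation}
and the remainder is $o(1)$ uniformly on this range because $\alN|t|^3\leq\alN^{-1/2}\log^3\alN\to 0$. Writing $k=\rho\alN-n$, the contribution of this range to the Fourier integral is therefore asymptotic to
\begin{equation}
\frac{1+o(1)}{2\pi}\int_\R\ex^{itk-\sigma^2\alN t^2/2}\,\d t=\frac{1+o(1)}{\sqrt{2\pi\sigma^2\alN}}\exp\rk{-\frac{k^2}{2\sigma^2\alN}},
\end{equation}
as the Gaussian tail beyond $\delta_N$ contributes $o(\alN^{-1/2})$.

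For the complementary range $|t|\in[\delta_N,\pi]$, since every summand $\nu_k(\cos(tk)-1)$ is non-positive, I would retain only the $k=1$ contribution to obtain
\begin{equation}
|\phi_N(t)|\leq\exp\rk{\nu_1(\cos t-1)}\leq\exp\rk{-c_0\alN(1-\cos t)},
\end{equation}
where any $c_0<\ex^{\beta\mu(\rho)}(2\pi\beta)^{-d/2}$ works for all large $N$, using $\nu_1=\ex^{\beta\mu_N}\q_1^{\ssup{\L_N,\bc}}\sim\ex^{\beta\mu(\rho)}(2\pi\beta)^{-d/2}\alN$ from Lemma~\ref{CorollaryQkSupercritical}(iv). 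Via $1-\cos t\geq 2t^2/\pi^2$ on $[-\pi,\pi]$, this range contributes at most $\int_{\delta_N}^{\pi}\ex^{-2c_0\alN t^2/\pi^2}\,\d t=O(\alN^{-1/2}\ex^{-2c_0\pi^{-2}\log^2\alN})=o(\alN^{-1/2})$. Since $\exp(-k^2/(2\sigma^2\alN))\in[\ex^{-1/(2\sigma^2)},1]$ for $0\leq k\leq\alN^{1/2}$, the claimed two-sided bound with uniform constants follows.

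The main obstacle is establishing the three moment asymptotics uniformly across all admissible boundary conditions. The first, $\sum_k k\nu_k=\rho\alN$, is exact by definition of $\mu_N$. The second and third require handling the crossover in $\q_k^{\ssup{\L_N,\bc}}$ between the small-$k$ regime (where Lemma~\ref{CorollaryQkSupercritical}(iv) gives the precise coefficient) and the large-$k$ regime (where parts (ii)--(iii) kick in), and crucially exploit that $\mu_N$ stays uniformly bounded away from $0$ so that $\ex^{\beta\mu_N k}$ provides a summable majorant independent of $\bc$. Once this uniform control is in place, the Fourier argument above is routine.
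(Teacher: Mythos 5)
Your proof is correct, but it takes a genuinely different route from the paper's. The paper decomposes $\Partn_{\L_N}=\sum_{z\in\L_N\cap\Z^d}\Partn_{z+U}$ spatially into contributions from unit cells (implicitly invoking a spatial version of the marked PPP, as in Section~2.7), then applies the Lindeberg CLT followed by a general local CLT for non-i.i.d.\ sums (citing Cram\'er), with the local step resting on uniform control of third and fourth moments of the cell contributions. You instead work directly with the representation $\Partn_{\L_N}=\sum_{k=1}^N kX_k$ over loop lengths, compute the characteristic function in closed form from the independent Poisson marginals, and run an explicit Fourier-inversion argument: Taylor expansion on $|t|\le\alN^{-1/2}\log\alN$ using $\sum_k k^r\nu_k=O(\alN)$ for $r\le 3$, and for the complementary arc the bound $|\phi_N(t)|\le\exp(\nu_1(\cos t-1))$, keeping only the $k=1$ term (which works because every summand $\nu_k(\cos(tk)-1)$ is non-positive). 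Your approach is more self-contained and avoids the need for the spatial decomposition (which the paper introduces only for free boundary conditions); the paper's approach is shorter on the page since it delegates to black-box limit theorems. One small point to polish in your writeup: when you write $-\frac{\sigma^2\alN t^2}{2}(1+o(1))$, for $|t|$ near the cutoff $\delta_N$ the quantity $\alN t^2$ is of order $\log^2\alN$, so the $o(1)$ cannot be absorbed additively; you should keep it as a multiplicative perturbation of the variance, which still yields the claimed two-sided Gaussian bound after the substitution $u=t\sqrt{\alN}$, but this deserves a sentence.
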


\begin{proof}
We abbreviate $\L$ for $\L_N$. Note that ${\rm N}_{\L}=\sum_{z\in\L\cap\Z^d}{\rm N}_{z+U}$, where we recall that $U=[-\frac 12,\frac 12]^d$. Under $\P_{\L_N,\mu_N}^{\ssup{\rm bc}}$, the family $({\rm N}_{z+U})_{z\in\L\cap\Z^d}$ is an independent collection of random variables ${\rm N}_{z+U}$ (however, not identically distributed) with $r$-th moment given by
\begin{equation}
  \E_{\L_N,\mu_N}^{\ssup{\rm bc}}[ {\rm N}_{z+U}^r]= \sum_{k=1}^N{\ex^{\beta\mu k}}k^{r-1}\int_{z+U}g_{\beta j}^{\ssup{\L_N,{\rm bc}}}(x,x)\,\d x,\qquad r\in\N.
\end{equation}
Lemma~\ref{ApproximationByThermodynamicFunctions} gives the convergence of $\sum_{z\in\L_N\cap\Z^d} \E_{\L_N,\mu_N}^{\ssup{\rm bc}}[ {\rm N}_{z+U}^r]$ as $N\to\infty$ for any $r\in\N$ towards the values for the limiting objects for the free boundary condition, that is to $\sum_{k\in\N}\ex^{\beta\mu k}k^{r-1}(2\pi\beta k)^{-d/2}$. Now, the Lindeberg central limit theorem (see~\cite[Theorem 5.12]{kallenberg1997foundations}) implies the central limit theorem for ${\rm N}_{\L_N}$, that is, the distributional convergence of 
\begin{equation*}
\begin{split}
    &\frac{\sum_{z\in\L_N\cap\Z^d}[{\rm N}_{z+U}-\E_{\L_N,\mu_N}^{\ssup{\rm bc}}[ {\rm N}_{z+U}]]}
    {(\sum_{z\in\L_N\cap\Z^d}[\E_{\L_N,\mu_N}^{\ssup{\rm bc}}[ {\rm N}_{z+U}^2]- \E_{\L_N,\mu_N}^{\ssup{\rm bc}}[ {\rm N}_{z+U}]^2])^{1/2}}\\
    &\qquad\qquad =\frac{{\rm N}_{\L_N}-N\rho}{(\sum_{z\in\L_N\cap\Z^d}[\E_{\L_N,\mu_N}^{\ssup{\rm bc}}[ {\rm N}_{z+U}^2]- \E_{\L_N,\mu_N}^{\ssup{\rm bc}}[ {\rm N}_{z+U}]^2])^{1/2}}
\end{split}
\end{equation*}
towards a standard normal variable (recall \eqref{murhodefN}). Note that, again by Lemma~\ref{ApproximationByThermodynamicFunctions}, the denominator is asymptotic to $ C|\L_N|^{1/2}$ for some $C=C(\rho)\in(0,\infty)$ as $N\to\infty$.

Furthermore, as the third and fourth moments are bounded and converge uniformly (this follows from Lemma~\ref{ApproximationByThermodynamicFunctions}), we also have the local central limit theorem, i.e., that the density of the rescaled sum is uniformly close to a Gaussian density, see~\cite[Chapter VII, Theorem 26]{Cramer_1970}. This implies the last assertion.
\end{proof}

We can now finish the proof:
\begin{proof}[Proof of Proposition~\ref{prop:ODLRO_sub}]
For any $\mu\in(-\infty,0)$, we can make the following change of measure:
\begin{equation}\label{EquationCHangeOfMEasure}
    \frac{\Pbc_{\L,0}\rk{\Partn_{\L}=\rho\al-k}}{\Pbc_{\L,0}\rk{\Partn_{\L}=\rho\al}}=\ex^{k\beta\mu} \frac{\Pbc_{\L,\mu}\rk{\Partn_{\L}=\rho\al-k}}{\Pbc_{\L,\mu}\rk{\Partn_{\L}=\rho\al}}\, .
\end{equation}

Again, we write $\L$ for $\L_N$. We start from Corollary~\ref{cor-FKPPPrepr}, which reads, using~\eqref{EquationCHangeOfMEasure},
\begin{equation}\label{Equation6231}
      \gamma_N^{\ssup{\L,{\bc}}}(x,y)=\sum_{k=1}^{N}g_{\beta k}^{\ssup{\L,{\rm bc}}}(x,y)\ex^{k\beta\mu} \frac{\Pbc_{\L,\mu}\rk{\Partn_{\L}=\rho\al-k}}{\Pbc_{\L,\mu}\rk{\Partn_{\L}=\rho\al}}\, .
\end{equation}
We will use this for $\mu$ replaced by $\mu_N=\mu^{\ssup{{\rm bc}}}_{\L_N}(\rho)$ (see~\eqref{murhodefN}). For the sum on $k\leq \al^{1/2}$, we can use the local central limit theorem of Lemma~\ref{LemmaCLT} in both numerator and denominator, and for the remaining $k$, we use a crude bound on $g$ and on the numerator, but again the local CLT for the denominator. Let us turn to the details.

By Lemma~\ref{LemmaCLT},
\begin{equation}\label{uppboundNonODLRO}
    \sum_{k\leq \al^{1/2}}g_{\beta k}^{\ssup{\L,{\rm bc}}}(x,y)\ex^{k\beta\mu_N} \frac{\Pbc_{\L,\mu_N}\rk{\Partn_{\L}=\rho\al-k}}{\Pbc_{\L,\mu_N}\rk{\Partn_{\L}=\rho\al}}\le C^2\sum_{k\leq \al^{1/2}}g_{\beta k}^{\ssup{\L,{\rm bc}}}(x,y)\ex^{k\beta\mu_N}\, .
\end{equation}
Furthermore, using Lemma~\ref{CorollaryQkSupercritical}(ii) for the case of boundary conditions, for some $c>0$,
\begin{equation}
    g_{\beta k}^{\ssup{\L,{\rm bc}}}(x,y)\le \ex^{-\frac{{\abs{x-y}^2}}{2\beta c k}}, \qquad k\in\N, x,y\in\L\, .
\end{equation}
Hence, the right-hand side of~\eqref{uppboundNonODLRO} is not larger than $C^2\sum_{k\leq \al^{1/2}}\ex^{-\frac{{\abs{x-y}^2}}{2\beta k}}\ex^{k\beta\mu_N }$. Now reserve $\ex^{\frac 12 k\beta\mu_N}$ for convergence, and use a quadratic extension for the remaining terms in the exponent to see that, for sufficiently small $c'\in(0,\infty)$, any $N$, and all $x,y$,
\begin{equation}
\begin{split}
\frac{{\abs{x-y}^2}}{2\beta c k}&-\frac 12 k\beta\mu_N - c' \abs{x-y}\\
&=\frac 1k\Big(\frac{|x-y|}{\sqrt{2\beta c}}-k\frac 1{\sqrt 2}\sqrt{-\beta \mu_N}\Big)^2
+\abs{x-y} (\sqrt{-\mu_N/c}-c'))\geq 0\, .
\end{split}
\end{equation}
This implies that
\begin{equation}\label{firsttermNoODLRO}
\mbox{l.h.s.~of \eqref{uppboundNonODLRO}}\leq \sum_{k\leq \al^{1/2}}\ex^{\frac 12 k\beta\mu_N} \ex^{-c' |x-y|}\leq C\ex^{-c |x-y|}\, ,
\end{equation}
for some $C\in(0,\infty)$ that depends only on $\beta$ and the infimum of the sequence $(\mu_N)_N$. 

Let us turn to the sum on $k\ge \al^{1/2}$. Here we estimate $\al^{1/2}g_{\beta k}(x,y)\leq C$ for any $x,y\in\L$, using Lemma~\ref{CorollaryQkSupercritical}(i) for the case of (diffusive or periodic) boundary conditions. Using again the local CLT for the denominator and the simple bound $1$ for the probability, leads to the estimate
\begin{equation}
\begin{split}
     \sum_{k=\al^{1/2}}^{\rho\al}g_{\beta k}^{\ssup{\L,{\rm bc}}}(x,y)\ex^{k\beta\mu_N} \frac{\Pbc_{\L,\mu_N}\rk{\Partn_{\L}=\rho\al-k}}{\Pbc_{\L,\mu_N}\rk{\Partn_{\L}=\rho\al}}
     &\le C\sum_{k=\al^{1/2}}^{\rho\al}g_{\beta k}^{\ssup{\L,{\rm bc}}}(x,y)\al^{1/2} \ex^{k\beta\mu_N}\\
     &\le \sum_{k\geq \al^{1/2}} \ex^{k\beta\mu_N}\leq C \ex^{-c\al^{1/2}}\, ,
\end{split}
\end{equation}
where we have used the formula for the remainder sum of a geometric  series, for a suitably picked $C$ (larger than the maximum of $(1-\ex^{\beta \mu_N})^{-1}$ over $N$), and $c>0$ picked smaller than the infimum of $|\beta\mu_N|$ over $N$ (recall from Lemma~\ref{ApproximationByThermodynamicFunctions} that $\mu_N\to\mu\in(-\infty,0)$ defined by $\densfree(\mu)=\rho$). Since $|x-y|\leq \sqrt{|\L|}$, we see that the upper bound in~\eqref{firsttermNoODLRO} is not smaller than this bound. Hence, combining it with~\eqref{firsttermNoODLRO}, we have derived the claim, for suitable $C,c\in(0,\infty)$.
\end{proof}

\section{Distribution of the lengths of the long loops}\label{sec-prooflongloops}

In this section, we prove Proposition~\ref{prop:PoissonDirichlet}.
Fix $\rho>\rhoc$ for the whole section and write $\rhoe=\rho- \rhoc$.

We will show that, for $\bc\in \gk{{\rm diffusive},\per}$, the finite-dimensional distributions of $\frac 1{|\L_N|}(L^{\ssup N}_i)_{i\in\N}$ under $\PPP_{\L_N}(\cdot\,\vert\, \Partn_{\L_N}=N)$ converge to the ones of the Poisson--Dirichlet distribution of parameter $1$. The latter ones are known~\cite{arratia2003logarithmic} to have, for any $s\in\N$, the $s$-dimensional probability density
\begin{equation}
 f\hk{s}(x)=\frac{\ex^{\gamma}}{\prod_{i=1}^s x_i}p\rk{\frac{1-(x_1+\ldots+x_s)}{x_s}}, \qquad 0\leq x_s\leq \dots\leq x_1\leq 1, \sum_{i=1}^s x_i\leq 1\, ,
\end{equation}
where we recall the definition of $p\colon [0,\infty)\to[0,\infty)$ from the beginning of Section~\ref{sec-longloops} (see around~\eqref{LaplacetrafoPD}), in particular recall that $\gamma$ is the Euler--Mascheroni constant. Note that $f\hk{s}$ is continuous and bounded on its domain, since $p$ is.

We will prove converge in distribution via locally uniform convergence of densities in the interior of the domain of $f\hk{s}$. The main step in the proof of Proposition~\ref{prop:PoissonDirichlet} is the following.
\begin{proposition}\label{prop:convergenceOfDensitiesPoissonDirichlet}
Consider the centred box $\L_N$ with volume $N/\rho$. Fix $s\in\N$ and $0<x_s< x_{s-1}<\ldots<x_1<1$ with $0<x_1+\ldots+x_s<1$. Then, for all positive integers $m_s\le m_{s-1}\le \ldots\le  m_1$, depending on $N$, with $m_i/(\rhoe\alN)\to x_i$ for all $i\in\gk{1,\ldots,s}$ as $N\to\infty$, 
\begin{equation}
    \lim_{N\to\infty}(\rhoe\alN)^s\,\PPP_{\L_N}^{\ssup\bc}\Big(L^{\ssup N}_i=m_i,\, \forall i\in\gk{1,\ldots,s}\Big|\Partn_{\L_N}=N\Big)=f\hk{s}(x_1,\ldots,x_r)\, .
\end{equation}
\end{proposition}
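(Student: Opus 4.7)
The plan is to compute the joint probability $\PPP^{\ssup{\bc,N}}_{\L_N}(L^{\ssup N}_i = m_i \text{ for all } i\leq s,\ \Partn_{\L_N} = N)$ exactly via the PPP structure, divide by $\PPP^{\ssup{\bc,N}}_{\L_N}(\Partn_{\L_N} = N)$, and identify the limit. Since $x_s<\dots<x_1$ strictly, for all large $N$ the integers $m_s<\dots<m_1$ are pairwise distinct, and the event $\{L_i^{\ssup N}=m_i\ \forall i\}$ is equivalent to $X_{m_i}=1$ for each $i$ together with $X_r=0$ for every $r>m_s$ with $r\notin\{m_1,\dots,m_{s-1}\}$. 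By Poisson independence, setting $k^*:=N-\sum_i m_i$ and $S_N:=\sum_{r<m_s}rX_r$,
\begin{equation}
\PPP^{\ssup{\bc,N}}_{\L_N}\big(L_i^{\ssup N}=m_i\ \forall i,\ \Partn_{\L_N}=N\big)=A_N\cdot B_N\cdot \PPP^{\ssup{\bc,N}}_{\L_N}(S_N=k^*)\, ,
\end{equation}
with $A_N:=\prod_{i=1}^s(\q_{m_i}^{\ssup{\L_N}}/m_i)\,\ex^{-\q_{m_i}^{\ssup{\L_N}}/m_i}$ and $B_N:=\exp(-\sum_{r>m_s,\,r\notin\{m_i\}}\q_r^{\ssup{\L_N}}/r)$. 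Lemma~\ref{CorollaryQkSupercritical}(iii) gives $\q_{m_i}^{\ssup{\L_N}}\sim\ex^{-\lambda_1\beta m_i\abs{\L_N}^{-2/d}}$; after the $\ex^{\pm\q_{m_i}/m_i}$ factors from $A_N$ and $B_N$ cancel, an integral comparison shows that $\sum_{r>m_s}\q_r^{\ssup{\L_N}}/r$ is $o(1)$ when $\lambda_1>0$ (since $m_s\abs{\L_N}^{-2/d}\to\infty$) and equals $\log(N/m_s)+o(1)$ when $\lambda_1=0$, so
\begin{equation}
A_N B_N\sim\frac{\ex^{-\lambda_1\beta\sum_i m_i\abs{\L_N}^{-2/d}}}{\prod_i m_i}\cdot\begin{cases}1,&\lambda_1>0,\\ m_s/N,&\lambda_1=0.\end{cases}
\end{equation}

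To compare $\PPP(S_N=k^*)$ to $\PPP(\Partn_{\L_N}=N)$, decompose $S_N=\Ns_{\L_N}+M_N$ with $M_N:=\sum_{\Nmin<r<m_s}rX_r$ and $\Partn_{\L_N}=\Ns_{\L_N}+\Nl_{\L_N}$, and expand both probabilities as convolutions over the Poisson-independent short and medium/long parts. By Corollary~\ref{Cor:shortloopsdev} the dominant range of $l$ in both convolutions is $\abs{l-\rhoc\abs{\L_N}}=o(\abs{\L_N})$, on which $(k^*-l)/m_s\to y:=(1-\sum_i x_i)/x_s\in(0,\infty)$ uniformly. Proposition~\ref{prop:MedLongLoops}, applied to $M_N$ with $\M=m_s-1$ and to $\Nl_{\L_N}$ via~\eqref{eq:LowerBoundSLongLOops}, then yields, for $\lambda_1>0$,
\begin{equation}
\PPP_{\L_N}(M_N=k^*-l)\sim\frac{p(y)\,\ex^{-\lambda_1\beta(k^*-l)\abs{\L_N}^{-2/d}}}{\Nmin},\quad \PPP_{\L_N}(\Nl_{\L_N}=N-l)\sim\frac{\ex^{-\gamma}\,\ex^{-\lambda_1\beta(N-l)\abs{\L_N}^{-2/d}}}{\Nmin},
\end{equation}
with the analogues $p(y)/m_s$ and $\ex^{-\gamma}/N$ in the case $\lambda_1=0$. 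Crucially, the $l$-dependence of both expressions is the same factor $\ex^{\lambda_1\beta l\abs{\L_N}^{-2/d}}$ (trivial for $\lambda_1=0$), so forming the ratio causes the $\Ns_{\L_N}$-weighted $l$-sum to factor out and cancel identically, leaving
\begin{equation}
\frac{\PPP(S_N=k^*)}{\PPP(\Partn_{\L_N}=N)}\sim \ex^\gamma p(y)\cdot\begin{cases}\ex^{\lambda_1\beta\sum_i m_i\abs{\L_N}^{-2/d}},&\lambda_1>0,\\ N/m_s,&\lambda_1=0.\end{cases}
\end{equation}

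Multiplying with $A_NB_N$ produces an exact cancellation of the exponential factors (and of $m_s/N$ with $N/m_s$ when $\lambda_1=0$), yielding $\PPP^{\ssup{\bc,N}}_{\L_N}(L_i^{\ssup N}=m_i\ \forall i\mid\Partn_{\L_N}=N)\sim\ex^\gamma p(y)/\prod_i m_i$. Scaling by $(\rhoe\abs{\L_N})^s$ and using $m_i/(\rhoe\abs{\L_N})\to x_i$ gives $\ex^\gamma p(y)/\prod_i x_i=f^{\ssup s}(x_1,\dots,x_s)$, as required. The main obstacle is the uniformity in $l$ needed to apply Proposition~\ref{prop:MedLongLoops} in both convolutions: the factor $\ex^{\lambda_1\beta l\abs{\L_N}^{-2/d}}$ varies substantially over the dominant range of $l$, so the cancellation must be implemented at the level of the explicit factorisation from the proof of Proposition~\ref{prop:MedLongLoops} rather than through a uniform limit. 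A secondary subtlety is that when $\sum_i x_i+x_s>1$ one has $y<1$, which is outside the formally stated range $\M\leq k$ of Proposition~\ref{prop:MedLongLoops}; however the Arratia--Barbour--Tavar\'e asymptotic~\eqref{LogBookThm4.13} underlying its proof is valid for every $y>0$, so the conclusion of~\eqref{eq:asylongloops} extends verbatim to the regime we need.
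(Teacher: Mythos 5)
Your proposal is essentially the same as the paper's proof. Both start from the Poisson factorisation of the event $\{L^{\ssup N}_i=m_i\ \forall i\}$, use Lemma~\ref{CorollaryQkSupercritical}(iii) for the $\q_{m_i}$, Proposition~\ref{prop:MedLongLoops} for the two ``long'' factors, and Corollary~\ref{Cor:shortloopsdev} to restrict the short-loop convolution to the dominant window. The only organisational difference is that the paper first sums over the values $k$ of $\Nl_{\L_N}$ and reduces the goal to the uniform asymptotics of $\PPP_{\L_N}(A\,\vert\,\Nl_{\L_N}=k)$, whereas you keep the short and medium parts together in $S_N=\Ns_{\L_N}+M_N$ and compare $\PPP(S_N=k^*)$ to $\PPP(\Partn_{\L_N}=N)$ by expanding both as convolutions against $\Ns_{\L_N}$; the two schemes produce the same cancellation term by term. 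Your ``main obstacle'' about uniformity in $l$ is not actually an obstacle: Proposition~\ref{prop:MedLongLoops} is stated uniformly for sequences with $k/\M\to y$, and over the dominant window $|l-\rhoc|\L_N||\leq\eps_N|\L_N|$ the ratios $(k^*-l)/(m_s-1)$ and $(N-l)/\Nmin$ do converge uniformly, so the standard application suffices; the explicit-factorisation remark, while safe, is unnecessary. Your ``secondary subtlety'' is a genuine observation: when $x_s+\sum_i x_i>1$ one has $y=(1-\sum_i x_i)/x_s<1$, and the range ``$\Nmax\leq\M\leq k\leq N$'' in the statement of Proposition~\ref{prop:MedLongLoops} is then formally not satisfied; the paper's own proof of Proposition~\ref{prop:convergenceOfDensitiesPoissonDirichlet} uses~\eqref{eq:asylongloops} with $\M=m_s-1$ and $k=k-\overline m$ in exactly this situation without comment. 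Your justification is correct --- the Arratia--Barbour--Tavar\'e asymptotic~\eqref{LogBookThm4.13} invoked in the proof of~\eqref{eq:asylongloops} holds for all $y>0$ --- and this small clarification would be worth adding to the statement of Proposition~\ref{prop:MedLongLoops}.
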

From this, the weak convergence of $(\rhoe|\L_N|)^{-1}(L_i^{\ssup N})_{i=1,\dots,s}$ towards the first $s$-di\-men\-sion\-al distribution of the Poisson--Dirichlet distribution follows, according to the Portemanteau theorem. From Scheff\'{e}'s theorem, the convergence of the entire sequence $(\rhoe|\L_N|)^{-1}(L_i^{\ssup N})_{i\in\N}$ follows, that is, Proposition~\ref{prop:PoissonDirichlet} (see~\cite[Corollary 5.11]{arratia2003logarithmic}).

\begin{proof} We can take $N$ so large that $m_s<m_{s-1}<\ldots<m_1$, as the $x_i$'s are different. As usual, we write $\L=\L_N$, and drop the superscripts $\bc$ and $N$. 

Abbreviate $A=\{L^{\ssup N}_i=m_i,\, \forall i\in\gk{1,\ldots,s}\}$. For $N$ large enough, it is independent of $(X_r)_{r\leq  \Nmin}$. Hence, we can decompose as follows.
\begin{equation}
\begin{split}
    &\PPP_\L^{\ssup\bc}(A\,\vert\, \Partn_\L=N)\\
    &\qquad =\sum_{k\in\N\colon |\frac k{|\L|}-\rhoe|\leq \eps_N} \frac {\PPP_\L^{\ssup\bc}(\Ns=N-k)}{\PPP_\L^{\ssup\bc}(\Partn_\L=N)}\PPP_\L^{\ssup\bc}(A\cap \{\Nl=k\})+o(|\L_N|^s)\, ,
\end{split}
\end{equation}
where the sequence $(\eps_N)_N$ is picked in $(0,1)$ tending to zero such that the remaining sum on $k$ on $\PPP_\L^{\ssup\bc}(\Ns=N-k)/\PPP_\L^{\ssup\bc}(\Partn_\L=N)$ is $o(|\L_N|^s)$, using the last assertion of Corollary~\ref{Cor:shortloopsdev}.

Hence, it will be sufficient to show that, uniformly for all $k=k_N$ satisfying $|\frac k{|\L|}-\rhoe|\leq \eps_N$,
\begin{equation}
\lim_{N\to\infty}(\rhoe\al)^s\,\PPP_\L^{\ssup\bc}(A\,\vert\,\Nl=k)=f\hk{s}(x_1,\ldots,x_r)\, .
\end{equation}
We write the probability explicitly in terms of the Poisson point process $(X_r)_{r=\Nmin+1}^N$. We abbreviate $[j]=\{1,\dots,j\}$ for $j\in\N$, and we write $\overline m=\sum_{i\in[s]}m_i$. We can decompose into independent events as follows:
\begin{equation}
\begin{split}
    &A\cap \{\Nl=k\}\\
    &\quad =\Big(\bigcap_{i\in [s]}\{X_{m_i}=1\}\Big)\cap\Big(\bigcap_{r\in[N]\setminus ([m_s]\cup \{m_i\colon i\in[s]\})}\{X_r=0\}\Big)\cap \big\{{\rm N}_\L^{\sqsup{\Nmin+1,m_s-1}}=k-\overline m\big\}\, .
\end{split}
\end{equation}
Hence,
    \begin{equation}\label{011220231W}
        \begin{aligned}
 \PPP_\L^{\ssup\bc}(A\,\vert\,\Nl=k)&= 
    \ex^{-\sum_{r=m_s+1}^N\frac{1}{r}\qbc_r}
            \Big(\prod_{i\in [s]}\frac{\qbc_{m_i}}{m_i}\Big)\frac{\PPP_\L^{\ssup\bc}\rk{{\rm N}_\L^{\sqsup{\Nmin+1,m_s-1} }=k-\overline m}}{\PPP_\L^{\ssup\bc}\rk{\Nl=k}}\, .
        \end{aligned}
    \end{equation}
Now we carry out the limit as $N\to\infty$ and recall that $k/|\L|\to\rhoe$, $m_i/|\L|\to x_i \rhoe$ for $i\in[s]$ and hence $\overline m /|\L|\to \rhoe\sum_{i\in[s]}x_i$. We may then use~\eqref{eq:QkSupercritical} for all the $\qbc_{r}$, and Proposition~\ref{prop:MedLongLoops} for the two terms in the last quotient. We need to distinguish the cases $\lambda_1>0$ and $\lambda_1=0$. 

In the case $\l_1>0$, every $\qbc_{r}$ vanishes as $N\to\infty$ in a stretched-exponential way, such that the first term on the right-hand side of~\eqref{011220231W} tends to one, and the second is 
\begin{equation}
\prod_{i\in [s]}\frac{\qbc_{m_i}}{m_i}\sim \prod_{i\in [s]}\frac{\ex^{-\lambda_1 \beta m_iL^{-2}}}{x_i |\L| \rhoe} 
= (|\L|\rhoe)^{-s} \frac {\ex^{-\lambda_i \beta \overline m L^{-2}}}{\prod_{i\in [s]}x_i}\, .
\end{equation}
Now, recalling that $p(1)=\ex^{-\gamma}$, the last term on the right-hand side of~\eqref{011220231W} is
\begin{equation}
\begin{split}
\frac{\PPP_\L^{\ssup\bc}\rk{{\rm N}_\L^{\sqsup{\Nmin+1,m_s-1} }=k-\overline m}}{\PPP_\L^{\ssup\bc}\rk{\Nl=k}}
&\sim \frac{\frac 1\Nmin p(\frac{k- \overline m}{m_s})\ex^{-\lambda_1 \beta (k-\overline m)L^{-2}}}
{\frac 1\Nmin p(1)\ex^{-\lambda_1 \beta kL^{-2}}}\\
&\sim \ex^{\lambda_i \beta \overline m L^{-2}} \ex^{\gamma}p\Big(\Big(1-\sum_{i\in[s]}x_i\Big)/x_s\Big)\, .
\end{split}
\end{equation}
In the case $\lambda_1=0$, we have that $\qbc_{r}$ converges very quickly towards one, hence, using the asymptotics $\sum_{r=1}^N\frac 1r\sim\log N$, the first term on the right-hand side of~\eqref{011220231W} is
\begin{equation}
\ex^{-\sum_{r=m_s}^N\frac{1}{r}\qbc_r}\sim\ex^{-\sum_{r=m_s}^N\frac{1}{r}}\sim \frac {m_s}N.
\end{equation}
Furthermore, the second is asymptotic to $(|\L|\rhoe)^{-s}/\prod_{i\in[s]} x_i$, and the last one is
\begin{equation}
\frac{\PPP_\L^{\ssup\bc}\rk{{\rm N}_\L^{\sqsup{\Nmin+1,m_s-1} }=k-\overline m}}{\PPP_\L^{\ssup\bc}\rk{\Nl=k}}
\sim \frac{\frac 1{m_s} p(\frac{k- \overline m}{m_s})}
{\frac 1N p(1)}
\sim \frac N{m_s}\ex^{\gamma}p\Big(\Big(1-\sum_{i\in[s]}x_i\Big)/x_s\Big)\, .
\end{equation}
This finishes the proof of Proposition~\ref{prop:convergenceOfDensitiesPoissonDirichlet}.
\end{proof}

\appendix

\section{Proof of the remaining statements}\label{sec:remaining}
\begin{proof}[\textbf{\emph{Proof of Lemma \ref{lem:FKrepgamma}}}] We drop the super-indices for the boundary conditions during the proof, and keep $N,\beta,x,y$ fixed. By $\mu_{x,y}^{\ssup{\beta}}$ we denote the canonical Brownian bridge measure from $x$ to $y$ with time interval $[0,\beta]$ with boundary condition ${\bc}$ in the box $\L$. Its total mass is equal to $\mu_{x,y}^{\ssup{\beta}}(\1)=g_\beta(x,y)$ for $x,y\in\L$, where we write $\mu(f)$ for the integral of a function $f$ with respect to a measure $\mu$.

The proof of \eqref{eq:FK_Z} is well-known (see e.g.~\cite[Proposition 1]{ACK11}), but let us give some hints. The starting point is the Feynman--Kac formula 
\begin{equation*}
\begin{split}
    Z_N&=\frac 1{N!}\sum_{\sigma\in\mathfrak S_N}\int_\L\d x_1\dots\int_\L\d x_N\,\Big[\bigotimes_{i=1}^N \mu_{x_i,x_{\sigma(i)}}^{\ssup\beta}\Big](\1)\\
    &=\frac 1{N!}\sum_{\sigma\in\mathfrak S_N}\int_{\L^N}\d (x_1,\dots,x_N)\prod_{i=1}^N g(x_i,x_{\sigma_i})\, ,
\end{split}
\end{equation*}
which follows from an application of the symmetrisation operator $\Pi_+$ to the well-known trace formula for $\ex^{-\beta \Delta}$ (recall that $\Sfrak_{N}$ is the set of permutations $\sigma$ of $1,\dots,N$). Now decompose every permutation $\sigma$ into its cycles and use that $(g_\beta)_{\beta\in(0,\infty)}$ is a convolution semigroup, i.e., $\int_{\L} g_\beta(x,y) g_{\beta'}(y,z)\,\d y=g_{\beta+\beta'}(x,z)$ for all $x,z\in\L$ and $\beta,\beta'\in(0,\infty)$ (this is why \eqref{eq:FK_Z} does not hold for free boundary conditions). Iterating this $k-1$ times in a cycle of length $k$ gives that $\q_k$ is the contribution to this cycle. If $\partition_k(\sigma)$ denotes the number of cycles of length $k$ in $\sigma$, then the number of $\sigma$'s such that $\partition_k(\sigma)=\partition_k$ for all $k$ is given as $N!/\prod_{k\in\N}{k^{\partition_k}\partition_k!}$.

We now turn to the proof of \eqref{eq:firstformula}. Introducing the product measure
\begin{equation}
M_{u,v}^{\ssup {\beta,N}}=\bigotimes_{i=1}^N\mu_{u_i,v_i}^{\ssup\beta},\qquad u=(u_1,\dots,u_N), v=(v_1,\dots,v_N)\in \L^N,
\end{equation}
we can identify the kernel $\gamma_N$ (see \cite{Gin70}, and also \cite[Theorem 6.3.14]{a1981operator}) as
\begin{equation}\label{FKform}
    \gamma_N(x,y)=\frac{N}{Z_N N!}\sum_{\sigma\in\Sfrak_{N}}\int_{\L^N}M_{(x,u),\sigma(y,u)}^{\ssup{\beta,N}}[\1]\,\d u,
\end{equation}
where we write $\sigma(v_1,\dots,v_{N})=(v_{\sigma(1)},\dots,v_{\sigma(N)})$. 

In this expression, $N$ Brownian bridges connect the $N+1$ points $x,y,u_1,\dots,u_N$ in such a way that $x$ is only a starting site, $y$ is only a terminating site, and each $u_i$ is both. In particular, there is a bridge (concatenation of bridges with time horizon $[0,\beta]$) starting in $x$ and terminating in $y$. Say $r\in\{1,\dots,N\}$ is the length of such a path, containing $r-1$ points of $\{u_1,\dots,u_{N-1}\}$; we can then split the permutation of $\sigma\in\Sfrak_N$ into $\pi\in\Sfrak_r$ and $\sigma'\in\Sfrak_{N-r}$, and with the help of the Markov property of the bridges, rewrite
\begin{equation}
     \gamma_N(x,y)=\frac{1}{Z_N}\sum_{r=1}^N\frac{1}{(N-r)!} \, g_{r\beta}(x,y) \sum_{\sigma\in\Sfrak_{N-r}}\int_{\L^{N-r}} M_{u,\sigma(u)}^{\ssup{\beta,N-r}}[\1]\,\bigotimes_{i=1}^{N-r}\d u_i\, ,
\end{equation}
where we recall that $g_{k\beta}(x,y)$ is the total mass of $\mu_{x,y}^{\ssup{k\beta}}$. 
Abbreviate $A(\partition)=\prod_{k\in\N}\frac{1}{k^{\partition_k}\partition_k!}$. Similarly to \eqref{eq:FK_Z}, we then get
\begin{equation}\label{gammaFKexpansion}
\begin{aligned}
   \gamma_N(x,y)&= \frac{\sum_{r=1}^{N}g_{r\beta}(x,y)\sum_{\partition\in\Pfrak_{N-r}}{A(\partition)}\prod_{k=1}^{N-r}  \q_k^{\partition_k}}{\sum_{\partition\in\Pfrak_N}{A(\partition)}\prod_{k=1}^N \q_k^{\partition_k}}
   &=\sum_{r=1}^{N}g_{r\beta}(x,y)\frac{Z_{N-r}}{Z_N}\, ,
   \end{aligned}
\end{equation}
which ends the proof.
\end{proof}

\begin{proof}[\textbf{\emph{Proof of Proposition \ref{PropFreeEnergy}}}]
According to our representation in Lemma~\ref{lem-PoissonRepr}, 
\begin{equation}
    Z_N^{\ssup{\L_N,{\bc}}}(\beta)=\ex^{\al p_{\L_N}^{\ssup{\rm bc},N}}\Pbc_{\L_N,0}\rk{\Partn_{\L_N}=\rho\al_N}\, .
\end{equation}
For $\rho>\rhoc$, the probability term vanishes as $N\to\infty$, but exponentially fast in $|\L_N|$, as we proved in Lemma~\ref{PropsotionUpperBoundTotal} for  for the diffusive or periodic boundary condition. Hence,
\begin{equation}
{\rm f}(\rho)=-\frac 1\beta \lim_{N\to\infty}p_{\L_N}^{\ssup{\rm bc},N}=-\frac 1\beta p_0 ^{\ssup{\rm bc}}=-\beta^{-1-d/2}\zeta(1+\smfrac d2)\, ,
\end{equation}
as we can easily deduce from Lemma~\ref{CorollaryQkSupercritical}(ii) and (iii). The right-hand side is equal to the right-hand side of \eqref{freeenergyident}. For  free boundary condition, the entire argument is  even more immediate.

For $\rho<\rhoc$, we make a change of measure using \eqref{Changeofdensity}:
\begin{equation}
    \ex^{\al\presbc_{\L} }\Pbc_{\L,0}\rk{\Partn_\L=\rho\al}=\ex^{\al\presbc_{\L,\mu(\rho)}}\ex^{-\beta\mu(\rho)\rho\al}\Pbc_{\L,\mu(\rho)}\rk{\Partn_\L=\rho\al}\, .
\end{equation}
In Lemma~\ref{LemmaCLT}, we showed that $\Pbc_{\L,\beta,\mu(\rho)}\rk{\Partn_\L=\rho\al}$ is of order $\al^{-1/2}$ (i.e., vanishes, but not exponentially fast in $|\L|$) and hence
\begin{equation}
       {\rm f}(\rho)=-\frac 1\beta\lim_{N\to\infty}\big(\presbc_{\L,\mu(\rho)}-\beta \mu(\rho)\rho\big)= -\frac{1}{\beta}p_{\mu(\rho)} +\mu(\rho)\rho\, ,
\end{equation}
which is equal to the right-hand side of \eqref{freeenergyident}.
\end{proof}

\section*{Acknowledgements}
The authors would like to thank the anonymous referee for their constructive comments, which improved the
quality of this paper. AZ thanks Silke Rolles for making his research visit to TU München possible. QV would like to thank WK and the Weierstrass Institute for making his research visit possible.
WK is also affiliated to TU Berlin; QV is also affiliated to Alpen-Adria-Universität Klagenfurt; AZ is also affiliated to Universit\"at Potsdam.

This work was partially funded by Deutsche Forschungsgemeinschaft (DFG) through DFG Project no. 422743078 {\it The statistical mechanics of the interlacement point process} in the context of SPP 2265 {\it Random Geometric Systems}.

\bibliographystyle{alpha}
\bibliography{references}{}

\end{document}